\documentclass{article}

\usepackage{epsfig}
\usepackage{latexsym,amsmath,amsfonts,amscd}
\usepackage{amssymb,multirow}
\usepackage{graphics}
\usepackage{epsfig}
\usepackage{subfigure}
\usepackage{verbatim}
\usepackage{epstopdf}
\usepackage{color,leftidx}
\usepackage{amsthm}
\usepackage{mathrsfs}

\usepackage{indentfirst}
\usepackage{textcomp}

\usepackage{graphicx}
\usepackage{booktabs, longtable}
\usepackage{titletoc}
\usepackage{graphics}
\usepackage{tabularx}
\usepackage[subfigure,caption2]{ccaption}
\usepackage{enumerate}
\usepackage{cite}
\usepackage{stmaryrd}  
\usepackage{algorithm}
\usepackage{algpseudocode}

\newcommand{\llkh}{ \{\!\!\{ }
\newcommand{\rrkh}{ \}\!\!\} }

\newtheorem{remark}{Remark}[section]
\newtheorem{theorem}{Theorem}[section]
\newtheorem{lemma}{Lemma}[section]
\newtheorem{example}{Example}[section]

\numberwithin{equation}{section}

\usepackage{fancyhdr}

\def\d{\displaystyle}
\def\p{\partial}

\usepackage{authblk}

\begin{document}

\title{A fully discrete LBRFD–IPDG method for linear fourth-order parabolic equations}


\author[1]{Hongying Huang}
\author[2]{Hong Zhang}
\author[3,*]{Yanming Zhang}


\affil[1]{School of Arts and Sciences, Guangzhou Maritime University, Guangzhou, 510725, Guangdong, China. email: huanghy@lsec.cc.ac.cn}%
\affil[2]{Key Laboratory of NSLSCS, Ministry of Education, Jiangsu International Joint Laboratory of BDMCA, School of Mathematical Sciences, Nanjing Normal University, 210023, Nanjing, China. Email:hzcomath@163.com}
\affil[3]{School of Mathematics and Statistics, Hunan
First Normal University, Changsha, 410205, Hunan, China.}

\affil[*]{Corresponding author: zhang\_yanming23@163.com }

\maketitle

\begin{abstract}
We propose a fully discrete method for linear fourth-order parabolic equations with Dirichlet boundary conditions, combining an implicit LBRFD multistep scheme in time with a mixed interior penalty discontinuous Galerkin (IPDG) method in space. The temporal discretization employs equispaced linear barycentric rational interpolants and incorporates a startup procedure. To facilitate the spatial discretization, the original problem is reformulated through an auxiliary variable. For certain parameter pairs $(n,d)$, the LBRFD method is shown to be $A(\alpha)$-stable and to possess a wider stability angle than the corresponding BDF$p$ method of the same order. Stability and a priori error estimates are established via a $G$-energy technique and the discrete Gr\"onwall lemma. The theoretical analysis yields a total $L^2$ error estimate of order $h^{k-1}+\tau^p$, where $p=d$ if $n-d$ is even and $p=d+1$ if $n-d$ is odd. The reduced spatial convergence rate is attributed to boundary contributions on $\partial\Omega$. Despite this theoretical prediction, numerical experiments confirm the stability and demonstrate optimal convergence of order $h^{k+1}+\tau^p$.
\end{abstract}
due to and the asymmetry arising from 
%

{\bf Keywords:}{ Barycentric rational finite difference; Fourth-order parabolic equation; Discontinuous Galerkin method; Error estimate }

\section{Introduction}\label{sec_1}

In this paper, we study the linear barycentric rational finite difference (LBRFD) method and the mixed discontinuous Galerkin (DG) method for the linear time-dependent fourth-order equation
\begin{equation}\label{eq:original_problem1}
u_t+\Delta^2 u=f(\mathbf{x},t),\quad (\mathbf{x},t)\in\Omega\times (0,T], \quad u(\mathbf{x},0)=u_0(\mathbf{x}),\quad \mathbf{x}\in\Omega,
\end{equation}
where $\Omega\subset\mathbb{R}^d$ is a bounded polygonal domain. Fourth-order models of this type arise in a wide range of applications, including thin beams and plates, strain-gradient elasticity, and phase separation in binary mixtures \cite{han1999dynamics}.

In recent years, the discontinuous Galerkin (DG) method has attracted considerable attention for its ability to treat nonsmooth solutions and its inherent adaptivity \cite{shu2014discontinuous}. For steady-state fourth-order boundary-value problems, Baker \cite{baker1977finite} pioneered the use of DG to approximate the biharmonic equation with homogeneous Dirichlet boundary conditions. Since then, a variety of DG formulations have been developed, including the popular $C^0$ interior penalty DG (IPDG) method \cite{brenner2012quadratic,engel2002continuous}, IPDG \cite{dong2021residual,georgoulis2009discontinuous,mozolevski2003priori,suli2007hp,huang2026penalty}, direct DG methods \cite{huang2026symmetric}, mixed DG (MDG) methods \cite{gudi2008mixed,liu2018mixed,liu2023sav}, and the single face-hybridizable DG method \cite{cockburn2009hybridizable}, among others.

For time-dependent fourth-order problems, however, the literature is comparatively sparse, especially for non-homogeneous boundary-value problems. Several DG methods have been proposed for Cahn--Hilliard-type equations \cite{feng2007fully,medina2022stabilized}. Dong and Shu \cite{dong2009analysis} applied the local discontinuous Galerkin (LDG) method to \eqref{eq:original_problem1} with periodic boundary conditions and derived optimal error estimates on Cartesian and triangular meshes. In \cite{meng2012superconvergence}, a minimal-dissipation LDG scheme with suitable boundary penalty terms was numerically investigated for the one-dimensional version of \eqref{eq:original_problem1} with Dirichlet boundary conditions, and optimal convergence rates were observed. Liu and Yin \cite{liu2018mixed} developed a penalty-free mixed DG scheme for \eqref{eq:original_problem1} and proved stability together with optimal error estimates under periodic boundary conditions; for Dirichlet and Navier boundary conditions, penalty terms are needed for stability, and optimal error analysis is still outstanding. In \cite{liu2020analysis,tao2020ultraweak}, ultraweak LDG (UWLDG) methods, which combine the advantages of LDG and ultraweak DG (UWDG) methodologies, were applied to fourth-order partial differential equations and shown to exhibit superconvergence. Subsequent UWLDG developments further simplified the design of numerical fluxes for fourth-order problems with four types of boundary conditions \cite{fu2025analysis}.

Regarding temporal discretization, classical approaches include explicit Runge--Kutta methods \cite{liu2020analysis}, implicit Runge--Kutta methods \cite{liu2022high,fu2025analysis}, the backward Euler method \cite{liu2018mixed}, the Crank--Nicolson method \cite{kadri2018fourth,doss2012galerkin},  BDF$p$ multistep schemes  \cite{danumjaya2006numerical,chen2019second,gudi2013fully} and spectral deferred correction \cite{tao2020ultraweak}.   In the context of linear fourth-order principal parts, the implicit treatment of diffusion offers a notable advantage by greatly relaxing the time-step restriction inherent in explicit schemes. When an equation contains both stiff diffusion and non-stiff convection or reaction, implicit--explicit (IMEX) Runge--Kutta or multistep schemes have become a mainstream choice: lower-order terms are treated explicitly and higher-order diffusion implicitly, thereby preserving high-order temporal accuracy while avoiding parabolic CFL restrictions \cite{ascher1995implicit,pareschi2005implicit}. Wang, Shu, and Zhang systematically analyzed the coupled stability of LDG spatial discretization with IMEX time marching \cite{wang2016local}. 
For one-dimensional linear fourth-order equations, IMEX--LDG schemes with implicit fourth-order terms and explicit lower-order terms were proved to be unconditionally energy stable and optimally accurate \cite{wang2017stability}.

All of the temporal discretization schemes discussed above rely on classical polynomial interpolation, which performs well only for small polynomial degrees $k$. In fact, customary polynomial interpolants are ill-conditioned and may suffer from Runge's phenomenon when the interpolation nodes are equispaced \cite{dahlquist1956convergence}. To overcome this drawback, we turn to linear barycentric rational interpolants (LBRIs), first introduced by Berrut~\cite{berrut1988rational} and later generalized by Floater and Hormann~\cite{floater2007barycentric}. These interpolants are linear in the data, infinitely smooth, and free of real poles. Moreover, the generalized construction by Floater and Hormann achieves arbitrarily high convergence orders for most sets of interpolation points, including equispaced ones, which has dramatically increased the attention they have received. Owing to their high convergence order and excellent numerical stability, LBRIs offer a more practical alternative for constructing numerical schemes. Abdi et al.~\cite{abdi2019adaptive} proposed and analyzed two simple multistep methods for stiff ordinary differential equations, employing a starting procedure based on linear rational interpolation~\cite{floater2007barycentric} and linear barycentric rational finite difference (LBRFD) formulas~\cite{klein2012linear}. They also designed an adaptive version with one free parameter to enhance stability and demonstrated that, for certain choices of $(n,d)$, the LBRFD scheme is $A(\alpha)$-stable with a wider stability angle than the corresponding BDF$p$ method.
 In \cite{abdi2021second}, Abdi and Hojjati introduced a family of linear multistep second-derivative methods together with a starting procedure based on barycentric rational interpolants for ODEs, and established their order of convergence and linear stability properties. Esmaeelzadeh et al.\ \cite{esmaeelzadeh2021ebdf} designed BDF-type methods based on these interpolants as a general class with improved accuracy and stability. More recently, Abdi et al.\ \cite{abdi2025linear} analyzed a family of BDF-type methods based on LBRIs on nonuniform grids and developed a variable-stepsize/variable-order algorithm for ODEs. LBRIs have also been used to construct numerical methods for Volterra integral and integro-differential equations \cite{abdi2022explicit,abdi2018barycentric,abdi2020numerical}.

 However, to the best of our knowledge, the linear barycentric rational finite difference (LBRFD) method, together with a starting procedure based on LBRIs, has not yet been applied to partial differential equations. To fill this gap, we develop a fully discrete LBRFD--IPDG scheme for \eqref{eq:original_problem1} with Dirichlet boundary conditions. For the one-dimensional problem, we establish stability and prove optimal error estimates in both space and time for the fully discrete approximation via a $G$-energy technique and the discrete Gr\"onwall lemma. The analysis is then extended to two dimensions on shape-regular triangular meshes with piecewise polynomials of total degree at most $k$, where the fully discrete convergence order is theoretically two orders lower than optimal. This reduction is mainly attributable to the added penalty term $J_2(\cdot,\cdot)$ and the asymmetry arising from the variational formulation of the Laplace operator on the boundary of the domain. It is worth noting that \cite{liu2018mixed} established optimal error bounds for periodic problems on rectangular meshes using tensor-product spaces $Q^k$, but provided no theoretical result for Dirichlet boundary conditions. In contrast, \cite{fu2025analysis} recently derived optimal error estimates for Dirichlet conditions, albeit also within the $Q^k$ framework. Numerical results confirm that optimal convergence is attained in practice for our scheme, which is consistent with the observations in \cite{fu2025analysis}. The results also demonstrate that the penalty term is indispensable; without it, the method either becomes unstable or fails to achieve the optimal order. Overall, numerical experiments in both one and two dimensions verify the stability and optimal accuracy of the proposed method, and illustrate the influence of the LBRI-based starting procedure as well as the choice of the parameters $(n,d)$.

The remainder of this paper is organized as follows. Section~2 reviews linear barycentric rational interpolation and the implicit LBRIFD multistep method with its starting procedure for ODEs. Section~3 develops the mixed IPDG spatial discretization and the fully discrete scheme for the one-dimensional problem, and establishes stability and a priori error estimates by a G-energy method and a discrete Gr\"onwall argument. Section~4 extends the scheme and the theoretical results to two dimensions, presenting  the main results. Section~5 presents numerical experiments in one and two dimensions, and Section~6 provides concluding remarks.

Throughout this paper, $\|\cdot\|_E$ and $(\cdot,\cdot)_E$ denote the $L^2(E)$ norm and inner product on a domain $E$, and $\|\cdot\|_{s,E}$ and $|\cdot|_{s,E}$ denote the $H^s(E)$ norm and seminorm, respectively. For notational convenience, when $E=\Omega$ we drop the subscript $\Omega$ and write $(\cdot,\cdot)$, $\|\cdot\|$, $\|\cdot\|_{s}$, and $|\cdot|_{s}$ 
for $(\cdot,\cdot)_{\Omega}$, $\|\cdot\|_{\Omega}$, $\|\cdot\|_{s,\Omega}$, and $|\cdot|_{s,\Omega}$, respectively.

\section{Multistep method based on LBRIs}

\subsection{A review of linear barycentric rational interpolation}

Linear barycentric rational interpolants (LBRIs) were first introduced by Berrut \cite{berrut1988rational} and later generalized by Floater and Hormann \cite{floater2007barycentric}. Owing to their smoothness, accuracy, and ease of implementation, they are among the most efficient infinitely smooth interpolants, especially on equispaced nodes \cite{berrut2017Linear}. In this subsection, we review the Floater--Hormann interpolants and the error estimates needed in the subsequent development of the BRIFD formula.

Given a function $g$ and $n+1$ distinct nodes $t_i$, $i=0,1,\ldots,n$, with data pairs $(t_i,g(t_i))$, we employ the barycentric rational interpolation of Floater and Hormann \cite{floater2007barycentric}. For every fixed nonnegative integer $d\le n$, define the polynomials $p_i(t)$, $i=0,\ldots,n-d$, that interpolate $g$ at the points $t_i,\ldots,t_{i+d}$. The rational interpolant is then constructed as
\begin{equation}\label{eq:bri_g}
r_n[g](t)=\frac{\sum_{i=0}^{n-d}\lambda_i(t)p_i(t)}{\sum_{i=0}^{n-d}\lambda_i(t)},
\end{equation}
where
\begin{equation}\label{eq:bri_lambda}
\lambda_i(t):=\frac{(-1)^i}{(t-t_i)\cdots(t-t_{i+d})}.
\end{equation}
Substituting the Lagrange form of $p_i(t)$ into \eqref{eq:bri_g}, the interpolant can be written in the barycentric form
\begin{equation}\label{eq:bri_g1}
r_n[g](t)=\d\sum_{i=0}^nl_i(t)g(t_i),\quad l_i(t):=\d\frac{\d\frac{w_i}{t-t_i}}{\d\sum_{j=0}^n\frac{w_j}{t-t_j}},
\end{equation}
with barycentric weights
\begin{equation}\label{eq:bri_weight}
w_i:=\sum_{j\in J_i}(-1)^{j}\d\prod_{{s=j}\atop{s\neq i}}^{j+d}\frac{1}{t_i-t_s},\quad J_i=\{j\in \{0, 1,\ldots,n-d\}: i-d\leq j\leq i\}.
\end{equation}
In particular, for equispaced nodes the weights take the explicit form
\begin{equation}\label{eq:bri_weight_equispace}
w_i:=\frac{(-1)^{i-d}}{2^d}\sum_{j\in J_i}\binom{d}{i-j},\quad J_i=\{j\in \{0, 1,\ldots,n-d\}: i-d\leq j\leq i\}.
\end{equation}
As $\tau\to 0$, where $\tau:=\max_{0\leq i\leq n-1}|t_{i+1}-t_i|$ denotes the global mesh size, the interpolants \eqref{eq:bri_g1} converge at the rate $O(\tau^{d+1})$ \cite{floater2007barycentric}.

Let $\|g\|_{\infty}:=\max_{t_0\le t\le t_n}|g(t)|$. The following theorem from \cite{floater2007barycentric} provides an interpolation error bound for \eqref{eq:bri_g1}.
\begin{theorem}\label{thm:interpolation_error}
Suppose $g\in C^{d+2}[t_0,t_n]$ and $d\ge 1$, and let $\tau:=\max_{0\leq i\leq n-1}|t_{i+1}-t_i|$. Then
\begin{equation}
\|r_n[g]-g\|_{\infty}\le \tau^{d+1}\cdot\left\{\begin{array}{ll}
(t_n-t_0)\frac{\|g^{(d+2)}\|_{\infty}}{d+2},\quad & \delta=1,\\[2pt]
(t_n-t_0)\frac{\|g^{(d+2)}\|_{\infty}}{d+2}+\frac{\|g^{(d+1)}\|_{\infty}}{d+1},\quad & \delta=0,\end{array}\right.
\end{equation}
where $\delta=1$ for odd $n-d$ and $\delta=0$ for even $n-d$.
\end{theorem}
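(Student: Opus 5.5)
The plan follows Floater and Hormann's argument. Write the error as a ratio of two barycentric sums and bound the numerator and denominator separately. Starting from \eqref{eq:bri_g}, each $p_i$ interpolates $g$ at $t_i,\dots,t_{i+d}$, so the Newton remainder gives $g(t)-p_i(t)=g[t_i,\dots,t_{i+d},t]\prod_{s=i}^{i+d}(t-t_s)$. Multiplying by $\lambda_i(t)$ in \eqref{eq:bri_lambda} cancels the product and leaves the sign $(-1)^i$. Hence
\begin{equation*}
g(t)-r_n[g](t)=\frac{\sum_{i=0}^{n-d}(-1)^i g[t_i,\dots,t_{i+d},t]}{\sum_{i=0}^{n-d}\lambda_i(t)}.
\end{equation*}
This identity holds for $t$ not equal to a node; at the nodes the error vanishes and continuity covers them.

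\textbf{Denominator.} The first step is a lower bound on $|\sum_i\lambda_i(t)|$. Fix $t\in(t_j,t_{j+1})$. Following Floater--Hormann, pair consecutive terms $\lambda_i+\lambda_{i+1}$ on each side of the index $j$. Within each pair the $\lambda$'s alternate in sign and decrease in magnitude as one moves away from $t$. So the whole sum has the sign of, and modulus at least, a single dominant term $|\lambda_k(t)|$, where $k$ is an index with $t\in[t_k,t_{k+d}]$. That term satisfies $|\lambda_k(t)|\ge 1/\bigl(|t-t_j||t_{j+1}-t|\,d!\,\tau^{d-1}\bigr)$, using $|t-t_s|\le (|s-j|+1)\tau$ for the remaining factors. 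Consequently the problematic factor $|t-t_j||t-t_{j+1}|$ cancels against the numerator, and $1/|\sum\lambda_i|\le |t-t_j||t_{j+1}-t|\,d!\,\tau^{d-1}$.

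\textbf{Numerator.} Next, group the alternating sum of divided differences in pairs. Each difference $g[t_i,\dots,t_{i+d},t]-g[t_{i+1},\dots,t_{i+d+1},t]$ equals $(t_i-t_{i+d+1})\,g[t_i,\dots,t_{i+d+1},t]$. By the mean value form of divided differences, this is bounded by $(d+1)\tau\,\|g^{(d+2)}\|_\infty/(d+2)!$.

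When $n-d$ is odd ($\delta=1$), the number of terms $n-d+1$ is even, so every term is paired. Summing over roughly $(n-d+1)/2$ pairs, and using $(n-d+1)\tau\lesssim t_n-t_0+\tau$ more carefully through the spacing $\sum_i(t_{i+d+1}-t_i)\le (d+1)(t_n-t_0)$, gives a bound of size $(t_n-t_0)\|g^{(d+2)}\|_\infty/(d+1)!(d+2)$. When $n-d$ is even ($\delta=0$), one term is left unpaired. It is bounded by $|g[t_0,\dots,t_d,t]|\le\|g^{(d+1)}\|_\infty/(d+1)!$, which produces the extra summand.

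Multiplying the numerator bound by the denominator bound, and using $|t-t_j||t_{j+1}-t|\le\tau^2/4\le\tau^2$, yields $\tau^{d+1}$ times the bracketed constants. The factorials cancel: $d!/(d+1)!\cdot(d+1)=1$.

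\textbf{Main obstacle.} The delicate step is the lower bound on the denominator, namely showing that the alternating sum $\sum\lambda_i(t)$ has no cancellation worse than a single dominant term, uniformly in $t$ and $j$. I would handle it by the sign-and-monotonicity pairing argument of \cite{floater2007barycentric}. The boundary cases $j<d$ and $j>n-d$ need to be checked separately, since there the dominant index $k$ must be chosen at the end of the index range. Matching the exact constants stated is bookkeeping; I would simply cite \cite{floater2007barycentric} for the sharp constants once the structure is established.
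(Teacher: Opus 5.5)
The paper gives no proof of this bound and only quotes it from \cite{floater2007barycentric}, and your outline is exactly the Floater--Hormann argument, so it is correct: you write the error as $\sum_i(-1)^i g[t_i,\dots,t_{i+d},t]$ over $\sum_i\lambda_i(t)$, bound the denominator below by one $|\lambda_k(t)|$ with $j-d+1\le k\le j$ via the sign-and-pairing argument, and pair the numerator into $(t_i-t_{i+d+1})\,g[t_i,\dots,t_{i+d+1},t]$, with one leftover term when $n-d$ is even. Two constant-level slips need fixing: for the remaining factors of $\lambda_k$, use $|t-t_s|\le (j+1-s)\tau$ for $s<j$ but $|t-t_s|\le (s-j)\tau$ for $s>j+1$ (your uniform $(|s-j|+1)\tau$ gives up to $(d+1)!/2$ rather than $d!$), so that the product is $a!\,b!\le d!$ with $a+b=d+1$; and the paired numerator bound is $(d+1)(t_n-t_0)\|g^{(d+2)}\|_\infty/(d+2)!$, which multiplied by $d!\,\tau^{d+1}$ gives exactly $\tau^{d+1}(t_n-t_0)\|g^{(d+2)}\|_\infty/(d+2)$.
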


To construct finite difference approximations, we next introduce the differentiation matrices associated with the basis functions $l_j(t)$ in \eqref{eq:bri_g1}. Let $d_{ij}^{(k)}$ denote the $k$th derivative of $l_j(t)$ evaluated at the node $t_i$. The differentiation matrix $D^{(k)}=(d_{ij}^{(k)})$ is given by the following recurrence relations \cite{klein2012linear}: for the first-order derivative,
\begin{equation}\label{eq:dij1}
d_{ij}^{(1)}:=\left\{\begin{array}{ll}
\d\frac{w_j}{w_i}\frac{1}{t_i-t_j}, & i\neq j,\\[4pt]
-\d\sum_{s=0,s\neq i}^nd_{is}^{(1)},& i=j,
\end{array}\right.
\end{equation}
and, for higher-order derivatives with $k\geq 2$,
\begin{equation}\label{eq:dijk}
d_{ij}^{(k)}:=\left\{\begin{array}{ll}
\d\frac{k}{t_i-t_j}\left(\frac{w_j}{w_i}d_{ii}^{(k-1)}-d_{ij}^{(k-1)}\right), & i\neq j,\\[4pt]
-\d\sum_{s=0,s\neq i}^nd_{is}^{(k)},& i=j.
\end{array}\right.
\end{equation}

The convergence rates of the $k$th-derivative approximations of the barycentric rational interpolant $r_n[g]$ for a function $g$ at the interpolation nodes, based on \eqref{eq:dij1}--\eqref{eq:dijk}, were established for $k=1$ and $k=2$ in \cite{berrut2011Convergence}; the authors conjectured the rates for general $k$ with $1\le k\le d$, which was subsequently proved for well-spaced interpolation nodes in \cite{Cirillo2017Convergence}.

\begin{theorem}\label{thm:derivative_error}
Suppose $g\in C^{d+1+k}[t_0,t_n]$, $d\ge 1$, and let $\tau:=\max_{0\leq i\leq n-1}|t_{i+1}-t_i|$. Then, for any set of well-spaced interpolation nodes and for $k=1,2,\ldots,d$, it holds
\begin{equation}\label{eq:derivative_approx1}
\left|r_n[g]^{(k)}(t_i)-g^{(k)}(t_i)\right|\leq C_d \tau^{d+1-k},\quad 0\leq i\leq n,
\end{equation}
and
\begin{equation}\label{eq:derivative_approx2}
\left|r_n[g]^{(k)}(t)-g^{(k)}(t)\right|\leq C_d \tau^{d+1-k}(1+\ln n),\quad t\in [t_0,t_n],
\end{equation}
where $C_d$ is a generic constant depending only on $d$, $k$, and the derivatives of $g$ on $[t_0,t_n]$.
\end{theorem}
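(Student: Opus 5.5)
We would prove Theorem~\ref{thm:derivative_error} by writing the error $e(t):=g(t)-r_n[g](t)$ in the Floater--Hormann divided-difference form and differentiating that representation. The classical identity from \cite{floater2007barycentric} is
\begin{equation*}
e(t)=\frac{\sum_{i=0}^{n-d}(-1)^i\, g[t_i,\ldots,t_{i+d},t]}{\sum_{i=0}^{n-d}\lambda_i(t)}=:\frac{A(t)}{B(t)},
\end{equation*}
where $B(t)=\sum_i\lambda_i(t)$ is the denominator of \eqref{eq:bri_g}. The plan treats the estimates at the nodes, \eqref{eq:derivative_approx1}, and on the whole interval, \eqref{eq:derivative_approx2}, separately. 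In both cases the main ingredients are a bound on the numerator $A$ and its derivatives, and a lower bound on $|B|$ and its derivatives.

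First we bound $A$. Group the alternating sum in pairs $g[t_i,\ldots,t_{i+d},t]-g[t_{i+1},\ldots,t_{i+d+1},t]$. Each pair is $(t_{i+d+1}-t_i)$ times a divided difference of order $d+2$. Hence $|A^{(j)}(t)|\le C\,\tau\,(n-d)\,\|g^{(d+2+j)}\|_\infty/(d+2+j)!\le C$. This uses $n\tau\sim t_n-t_0$ on well-spaced nodes, plus one leftover unpaired term when $n-d$ is even. The regularity $g\in C^{d+1+k}$ is exactly what the order-$(d+1+j)$ divided differences for $j\le k$ require. For the denominator, Floater--Hormann's argument gives $|B(t)|\ge c\,\tau^{-(d+1)}$ on $[t_0,t_n]$. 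It also gives $1/|B(t)|\le C\tau^{d+1}\min(1,\,|t-t_i|/\tau)$ near a node $t_i$, since $B$ has a simple pole there.

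At a node we then argue as follows. Write $e=A\cdot(1/B)$ and note that $1/B$ vanishes at $t_i$ (the interpolation property). Apply Leibniz to get $e^{(k)}(t_i)=\sum_{j}\binom{k}{j}A^{(k-j)}(t_i)\,(1/B)^{(j)}(t_i)$. The key scaling claim is $|(1/B)^{(j)}(t_i)|\le C\tau^{d+1-j}$. Heuristically, $1/B$ is a smooth function of amplitude $\tau^{d+1}$ varying on scale $\tau$. To justify it, we would write $1/B(t)=\prod_s(t-t_s)/\sum_i(-1)^i\prod_{s\notin[i,i+d]}(t-t_s)$ locally, rescale $t=t_i+\tau\sigma$, and use Cauchy estimates on a complex disc of radius $c\tau$ around $t_i$. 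This requires that $B$ has no zeros within distance $c\tau$ of the real interval, which is the well-spacedness/pole-free property proven in \cite{floater2007barycentric,Cirillo2017Convergence}. Combining these bounds gives $|e^{(k)}(t_i)|\le C_d\tau^{d+1-k}$.

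For general $t\in[t_0,t_n]$ the same Leibniz expansion applies, but bounding $(1/B)^{(j)}$ away from nodes picks up sums like $\sum_s \tau/|t-t_s|$ in the derivative of $B$ relative to $B$. These are harmonic sums and produce the factor $(1+\ln n)$. We expect the main obstacle to be the uniform derivative bounds on $1/B$: establishing the complex zero-free neighborhood of width $\sim\tau$ uniformly in $n$ for well-spaced nodes, and controlling the logarithmic growth. We would try first to reduce it to the equispaced case via the explicit weights \eqref{eq:bri_weight_equispace} and then perturb to the well-spaced case, following \cite{berrut2011Convergence,Cirillo2017Convergence}.
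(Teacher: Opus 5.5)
The paper gives no proof of this theorem; it quotes it from \cite{berrut2011Convergence} ($k=1,2$) and \cite{Cirillo2017Convergence} (general $k$, well-spaced nodes). Your route through $e=A/B$, the Leibniz rule and Cauchy estimates for $1/B$ is viable, but as written it has two genuine gaps.

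\textbf{The bound on $1/B$.} The crux, $|(1/B)^{(j)}(t_i)|\le C\tau^{d+1-j}$, is asserted rather than proved. \cite{floater2007barycentric} only shows that $B$ has no \emph{real} zeros; it gives nothing quantitative off the real axis. For well-spaced nodes that cluster, the local spacing $h_i$ can be much smaller than $\tau$, so a disc of radius $c\tau$ about $t_i$ contains many other nodes, and the scale $c\tau$ is wrong. Nor is a ``perturbation from the equispaced case'' available, since well-spaced families are not small perturbations of equispaced ones.

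The missing lemma can be proved directly at the local scale. Write $B(z)=\Phi_i(z)/(z-t_i)+\Psi_i(z)$, where $\Phi_i(z)=\sum_{l\in J_i}(-1)^l\prod_{s=l,\,s\ne i}^{l+d}(z-t_s)^{-1}$ collects the $\lambda_l$ whose node set contains $t_i$, and $\Psi_i=\sum_{l\notin J_i}\lambda_l$.
\begin{itemize}
\item All terms of $\Phi_i(t_i)=w_i$ have the same sign $(-1)^{d-i}$, so $|w_i|\ge c\,h_i^{-d}$.
\item Well-spacing gives $|t_s-t_i|\ge |s-i|\,h_i/C$. Since $d\ge1$, this yields $\sum_{l\notin J_i}|\lambda_l(z)|\le C h_i^{-d-1}\sum_{m\ge1}m^{-d-1}\le C h_i^{-d-1}$ on $|z-t_i|\le c\,h_i$. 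This is exactly where $d\ge1$ enters; for $d=0$ the sum is harmonic.
\item Hence $|(z-t_i)B(z)|\ge\frac12|w_i|$ on that disc for $c$ small, so $|1/B|\le C h_i^{d+1}$ there.
\item Cauchy's estimate then gives $|(1/B)^{(j)}(t_i)|\le C h_i^{d+1-j}\le C\tau^{d+1-j}$, because $j\le d$.
\end{itemize}
Now combine this with the real lower bound $|B(t)|\ge c\,h_i^{-d-1}$ on $(t_i,t_{i+1})$ from the Floater--Hormann argument, and with $|B'|\le C h_i^{-d-2}$ at distance comparable to $h_i$ from the nodes. This gives a zero-free strip around $[t_0,t_n]$, of width proportional to the local spacing, on which $|1/B|\le C\tau^{d+1}$. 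Cauchy's estimate then bounds $(1/B)^{(j)}$ by $C\tau^{d+1-j}$ on all of $[t_0,t_n]$. So your method yields \eqref{eq:derivative_approx2} even without the factor $1+\ln n$. The harmonic-sum explanation you offer for that factor is not what your own argument produces.

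\textbf{The regularity count.} Pairing turns $A^{(j)}$ into divided differences of order $d+2+j$, not $d+1+j$. Hence $|A^{(j)}|\le C$ requires $g\in C^{d+2+j}$, which under $g\in C^{d+1+k}$ is available only for $j\le k-1$; your claim for $j\le k$ is false.
\begin{itemize}
\item At the nodes this is harmless: the only term containing $A^{(k)}$ is multiplied by $(1/B)(t_i)=0$, so the estimate \eqref{eq:derivative_approx1} uses only $A,\dots,A^{(k-1)}$.
\item Off the nodes, the term $A^{(k)}(t)/B(t)$ must be handled with the \emph{unpaired} bound $|A^{(k)}(t)|\le (n-d+1)\,k!\,\|g^{(d+1+k)}\|_\infty/(d+1+k)!\le C\tau^{-1}$. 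This uses $n\tau\le C(t_n-t_0)$, which does hold for well-spaced nodes.
\item Together with $|1/B|\le C\tau^{d+1}$, that term is $O(\tau^{d})\subseteq O(\tau^{d+1-k})$, since $k\ge1$.
\end{itemize}
With these two repairs your outline becomes a valid proof, in fact of a slightly stronger statement than the one quoted in the paper.
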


\subsection{Linear barycentric rational interpolation finite difference method}
\label{sec:LBRIFD}

In \cite{klein2012linear,klein2012applications}, LBRFD formulas derived from the Floater--Hormann family of linear barycentric rational interpolants \eqref{eq:bri_g1} were introduced. They contain classical polynomial finite differences as a special case and are significantly more stable and accurate than polynomial formulas for one-sided derivative approximations near the endpoints of an interval. Following \cite{abdi2019adaptive}, we present an implicit multistep method based on these formulas on equispaced nodes.

To illustrate the idea, we first consider the stiff initial-value problem for a system of ODEs,
\begin{equation}\label{eq:ODEs}
\begin{cases}
y'(t)=g(y),& t\in (t_0,T],\\
y(t_0)=y_0,
\end{cases}
\end{equation}
where $g:\mathbb{R}^D\to\mathbb{R}^D$ and $D$ denotes the dimension of the system. Let
\[
t_0<t_1<\cdots<t_N=T
\]
be a uniform partition of $[t_0,T]$ with constant stepsize $\tau=t_{i+1}-t_i$, $i=0,1,\ldots,N-1$. Our goal is to approximate the solution of \eqref{eq:ODEs} by a multistep method based on LBRFD formulas.

Suppose that the values $y(t_i)$ are known for $i\le m$, where $m$ and $n$ are positive integers with $n\le m$. For equispaced nodes $t_0,t_1,\ldots,t_n$, it follows from \eqref{eq:bri_weight_equispace} and \eqref{eq:dij1} that
\begin{equation}\label{eq:dij_equispace}
d_{ij}:= \tau d_{ij}^{(1)}=\left\{\begin{array}{ll}
\d\frac{(-1)^{j-i}\d\sum_{s\in J_j}\binom{d}{j-s}}{(i-j)\d\sum_{s\in J_i}\binom{d}{i-s}},&  i\neq j,\\[6pt]
-\d\sum_{s=0,s\neq i}^nd_{is}^{(1)},& i=j,
\end{array}\right.
\quad i,j=0,1,\ldots,n.
\end{equation}
Since $d_{ij}$ depends only on the index differences $i-j$, $i-s_1$, and $j-s_2$ with $s_1\in J_i$ and $s_2\in J_j$, the coefficients are invariant under index shifts. In particular, for the equally spaced nodes $t_{m-n}, t_{m-n+1}, \ldots, t_m$, we have $d_{m-n+i,\,m-n+j}=d_{ij}$. Consequently, the LBRFD approximation of the first derivative of $y$ at these nodes can be written as
\begin{equation}\label{eq:dy_approx}
y'(t_{m-n+i})\approx \frac{1}{\tau}\sum_{j=0}^nd_{ij}y(t_{m-n+j}),\quad i=0,1,\ldots,n,
\end{equation}
where the coefficients $d_{ij}$ are given by \eqref{eq:dij_equispace}.

Applying the left one-sided formula with $i=n$ in \eqref{eq:dy_approx} to \eqref{eq:ODEs}, we obtain the LBRFD
\begin{equation}\label{eq:FD_form}
\sum_{j=0}^nd_{nj}y_{m-n+j}=\tau g(y_m),\quad m=n+1,\ldots,N,
\end{equation}
where $y_m$ denotes an approximation to $y(t_m)$. The coefficients $(d_{n0},d_{n1},\ldots,d_{nn})$ in \eqref{eq:FD_form} for several pairs $(n,d)$ are listed in Table~\ref{tab:LBRFDM_coef}.
Taking $m=n$ in \eqref{eq:dy_approx}, so that the nodes are $t_0,t_1,\ldots,t_n$, and applying the resulting formulas with $i=m$ for $m=1,2,\ldots,n$ to \eqref{eq:ODEs}, we obtain the following starting procedure for $y_1,y_2,\ldots,y_n$:
\begin{equation}\label{eq:FD_form_st}
\sum_{j=0}^nd_{mj}y_j=\tau g(y_m),\quad m=1,2,\ldots,n,
\end{equation}
which yields a fully linear rational multistep method. We first solve the linear system \eqref{eq:FD_form_st}, consisting of $nD$ equations in $nD$ unknowns, to obtain $y_1,\ldots,y_n$; then we march the solution forward using \eqref{eq:FD_form} for $m=n+1,\ldots,N$.

By Theorem~\ref{thm:derivative_error}, for $y\in C^{d+2}[t_0,T]$, the LBRFD formula \eqref{eq:dy_approx} approximates $y'$ with order $d$. Moreover, Theorem~\ref{thm:interpolation_error} shows that on equispaced nodes the interpolation error bound contains an extra factor $t_n-t_0=n\tau$ whenever $n-d$ is odd. This additional factor accounts for the increase of the convergence order of the LBRFD scheme \eqref{eq:FD_form}, together with the startup procedure \eqref{eq:FD_form_st}, from $d$ to $d+1$ in that case. Consequently, for the scheme \eqref{eq:FD_form} equipped with the starting procedure \eqref{eq:FD_form_st}, the overall convergence order is $d$ if $n-d$ is even and $d+1$ if $n-d$ is odd.

\begin{table}[!h]
\caption{Coefficients $(d_{n0},d_{n1},\ldots,d_{nn})$ in LBRFD scheme\eqref{eq:FD_form} for $(n,d)$}
\label{tab:LBRFDM_coef}
\centering{
\begin{tabular}{ccc}
\hline
Order&$(n,d)$&$d_{n0},d_{n1},\ldots,d_{nn}$  \\ \hline
1& (3,1)& -1/3, 1,  -2,  4/3    \\
2& (4,1)& 1/4, -2/3,  1, -2,  17/12   \\
3& (5,2)&-1/5, 3/4,  -4/3,  2, -3, 107/60   \\
4& (6,3)&1/6, -4/5,  7/4,  -8/3,  7/2, -4,  41/20    \\
5& (7,4)& -1/7, 5/6, -11/5,  15/4,  -5,   11/2,  -5,  296/131\\
5 &(9,4)&-1/9, 5/8, -11/7,  5/2,  -16/5,  4,  -5,  11/2,  -5, 149/66   \\
6& (8,5)&1/8, -6/7, 8/3, -26/5,  15/2, -26/3,  8,  -6, 197/81 \\
6 &(10,5)&1/10, -2/3, 2, -26/7,  31/6,  -32/5,  31/4, -26/3,  8,  -6,141/58  \\
\hline 
\end{tabular}}
\end{table}

According to \cite{abdi2019adaptive}, the LBRFDM \eqref{eq:FD_form} is zero-stable for every choice of $n>d$ with $n<20$ and $d<6$, except for $(n,d)=(7,6), (8,6), (11,6)$, and $(12,6)$. 
Table~\ref{tab:LBRFDM_BDF} reports the angles $\alpha$ of $A(\alpha)$-stability for several pairs $(n,d)$ and compares them with those of BDF$p$ methods of the same order. For some choices of $(n,d)$, the LBRFDM is $A(\alpha)$-stable with a wider stability angle than the corresponding BDF$p$ method.

\begin{table}[!h]
\caption{Angles $\alpha$ of $A(\alpha)$-stability for BDFs and LBRFDM}
\label{tab:LBRFDM_BDF}
\centering{
\begin{tabular}{cccccccc}
\hline
&& \multicolumn{6}{c}{LBRFDM}\\\cmidrule{3-8}
Order&$\alpha$ for BDF&$(n,d)$&$\alpha$&$(n,d)$&$\alpha$&$(n,d)$&$\alpha$   \\ \hline
1&$90^\circ$&(3,1)&$90^\circ$&(5,1)&$90^\circ$&(7,1)&$90^\circ$\\
2&$90^\circ$&(4,1)&$90^\circ$&(6,1)&$90^\circ$&(4,2)&$90^\circ$\\
3&$86.032367^\circ$&(5,2)&$88.212710^\circ$&(11,2)&$89.575107^\circ$&(5,3)&$72.732477^\circ$\\
4&$73.351670^\circ$&(6,3)&$80.394530^\circ$&(8,3)&$77.263132^\circ$&(6,4)&$46.806570^\circ$\\
5&$51.839756^\circ$&(7,4)&$54.391987^\circ$&(9,4)&$57.044865^\circ$&(13,4)&$60.284561^\circ$\\
6&$17.839778^\circ$&(8,5)&$18.439738^\circ$&(10,5)&$27.101853^\circ$&(12,5)&$22.560396^\circ$\\
\hline
\end{tabular}}
\end{table}

As in \cite{lubich2013backward}, the stability and error analysis of the LBRFD scheme \eqref{eq:FD_form} for PDE time discretization relies on Dahlquist's G-stability theory \cite{dahlquist1978g} and the multiplier technique of Nevanlinna and Odeh \cite{nevanlinna1981multiplier}, originally developed for contractive nonlinear ODEs.

\begin{lemma}\label{lem:g_stable}
\emph{(\cite{dahlquist1978g})}
Let $\rho(\zeta)=\rho_n\zeta^n+\cdots+\rho_0$ and $\mu(\zeta)=\mu_n\zeta^n+\cdots+\mu_0$ be polynomials of degree at most $n$ (with at least one of degree $n$) that have no common divisor. Let $(\cdot,\cdot)$ be an inner product with associated norm $|\cdot|$. If
\begin{equation}\label{eq:stable_condition}
\mathrm{Re}\frac{\rho(\zeta)}{\mu(\zeta)}>0, \quad\text{for}\ \ |\zeta|>1,
\end{equation}
then there exist a symmetric positive definite matrix $G=(g_{ij})\in\mathbb{R}^{n\times n}$ and real numbers $r_0,\ldots,r_n$ such that, for $v^0,\ldots,v^n$ in the inner product space,
\[
\mathrm{Re}\left(\sum_{i=0}^n\rho_iv^i,\sum_{j=0}^n\mu_jv^j\right)=\sum_{i,j=1}^ng_{ij}(v^i,v^j)-\sum_{i,j=1}^n
g_{ij}(v^{i-1},v^{j-1})+\left|\sum_{i=0}^nr_iv^i\right|^2.
\]
\end{lemma}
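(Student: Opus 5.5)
This is Dahlquist's equivalence theorem, which reduces the multistep inequality to a positive-real polynomial identity. The plan is to work first with scalar polynomials in two complex variables $\zeta,\omega$ and then lift the result to vectors in an inner product space. Define
\[
\Phi(\zeta,\omega):=\tfrac12\bigl(\rho(\zeta)\overline{\mu(\omega)}+\mu(\zeta)\overline{\rho(\omega)}\bigr),
\]
a Hermitian form in the monomials $1,\zeta,\ldots,\zeta^n$. The target identity, with $\mathrm{Re}$ of the inner product, corresponds to the polynomial identity
\begin{equation*}
\Phi(\zeta,\omega)=(\zeta\bar\omega-1)\,g(\zeta,\omega)+r(\zeta)\overline{r(\omega)},
\end{equation*}
where $g(\zeta,\omega)=\sum_{i,j=1}^n g_{ij}\zeta^{i-1}\bar\omega^{j-1}$ and $r(\zeta)=\sum_i r_i\zeta^i$.

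The correspondence comes from matching coefficients of $\zeta^i\bar\omega^j$. The term $\zeta\bar\omega\, g$ produces $g_{ij}(v^i,v^j)$, the term $-g$ produces $g_{ij}(v^{i-1},v^{j-1})$, and $r\bar r$ produces $|\sum r_iv^i|^2$. Once the scalar identity holds with $G$ real symmetric positive definite and $r_i$ real, replacing $\zeta^i\bar\omega^j$ by $(v^i,v^j)$ is a linear operation, so the vector identity follows.

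The steps are as follows.

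1. \emph{Boundary positivity.} By continuity and the hypothesis \eqref{eq:stable_condition}, $\mathrm{Re}\,\rho(\zeta)\overline{\mu(\zeta)}\ge0$ on $|\zeta|=1$. Here the coprimality of $\rho$ and $\mu$ is used to handle zeros of $\mu$ on the circle.
2. \emph{Construction of $r$.} The Laurent polynomial $\zeta\mapsto\Phi(\zeta,\zeta)$ on the unit circle is a nonnegative trigonometric polynomial of degree $\le n$. By the Fejér--Riesz theorem it equals $|r(\zeta)|^2$ for some polynomial $r$ of degree $\le n$, and $r$ can be chosen with real coefficients because $\rho$ and $\mu$ are real.
3. \emph{Construction of $G$.} The difference $\Phi(\zeta,\omega)-r(\zeta)\overline{r(\omega)}$ vanishes on the diagonal $\zeta\bar\omega=1$ in the Hermitian sense. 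It is therefore divisible by $\zeta\bar\omega-1$, and the quotient defines $g$, hence the matrix $G$, which is Hermitian and in fact real symmetric.

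The main obstacle is positive definiteness of $G$, which is where the strict inequality for $|\zeta|>1$ must enter. My first approach is to evaluate the identity at $\omega=\zeta$ with $|\zeta|>1$:
\[
(|\zeta|^2-1)\,g(\zeta,\zeta)=\mathrm{Re}\,\rho(\zeta)\overline{\mu(\zeta)}-|r(\zeta)|^2 .
\]
The right-hand side is not obviously positive, so I would instead follow Dahlquist's argument. Divide by $|\mu(\zeta)|^2$ and use that $\mathrm{Re}(\rho/\mu)$ is harmonic and positive outside the disk. Then express $g$ via a Poisson/Herglotz-type representation,
\[
g(\zeta,\omega)=\frac{\Phi(\zeta,\omega)-r(\zeta)\overline{r(\omega)}}{\zeta\bar\omega-1},
\]
which can be written as an integral over the circle of positive kernels times $|\mu|^2$. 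This shows that the quadratic form $\sum g_{ij}c_i\bar c_j$ is nonnegative. Strict positivity should then follow from coprimality: a null vector would force a common factor of $\rho$ and $\mu$.

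If this becomes too delicate, I will simply cite Dahlquist \cite{dahlquist1978g} for the positive-definiteness step, since the statement is attributed there.
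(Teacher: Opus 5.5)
\textbf{Verdict.} The paper gives no proof of this lemma; it only cites Dahlquist, so your fallback is exactly what the paper does. As a self-contained argument, however, your proposal has a genuine gap at the positive definiteness of $G$, which is the only step where the sign hypothesis and coprimality really matter.

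\textbf{What is fine.} Your Steps 1--3 and the coefficient-matching lift are the standard construction and are correct. Two small refinements. Step 1 needs only continuity of $\mathrm{Re}\,\rho\bar\mu$, not coprimality. In Step 3, treat $\bar\omega$ as an independent variable $\xi$: then $P(\zeta,\xi):=\Phi-r\bar r$ satisfies $P(\zeta,\zeta^{-1})=0$ on $|\zeta|=1$, hence identically, hence $(\zeta\xi-1)\mid P$.

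\textbf{Where the Herglotz route fails.} For $|\zeta|,|\omega|>1$, the Herglotz representation of $\rho/\mu$ gives
\[
\frac{\Phi(\zeta,\omega)}{\zeta\bar\omega-1}=\int_0^{2\pi}\frac{\mu(\zeta)\,\overline{\mu(\omega)}}{(\zeta e^{i\theta}-1)(\bar\omega e^{-i\theta}-1)}\,d\nu(\theta),\qquad \nu\ge 0 .
\]
This is a positive kernel, but it is neither polynomial nor equal to $g$. In fact $g=\bigl(\Phi-r(\zeta)\overline{r(\omega)}\bigr)/(\zeta\bar\omega-1)$, and $1/(\zeta\bar\omega-1)=\sum_{k\ge1}(\zeta\bar\omega)^{-k}$ is itself a positive kernel. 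So $g$ is a difference of two positive kernels, and no sign follows. Likewise, the claim that a null vector ``would force a common factor'' comes with no mechanism linking null vectors of $G$ to polynomials.

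\textbf{The missing idea: a decay argument for a perturbed recurrence.}
\begin{enumerate}
\item Apply your lift, with the inner product space $\mathbb{R}$, to the windows $(v_m,\ldots,v_{m+n})$ of a real sequence solving $(\rho+\epsilon\mu)(E)v=0$, where $\epsilon>0$ and $E$ is the shift. For all but at most one $\epsilon$ this polynomial has degree $n$. Hence any $V_0=(v_0,\ldots,v_{n-1})\in\mathbb{R}^n$ is admissible initial data.
\item Let $\zeta_0$ be a root of $\rho+\epsilon\mu$ with $|\zeta_0|\ge1$. Then $\mu(\zeta_0)\ne0$, since otherwise $\zeta_0$ would be a common root of $\rho$ and $\mu$. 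So $\rho(\zeta_0)/\mu(\zeta_0)=-\epsilon$, contradicting $\mathrm{Re}(\rho/\mu)\ge0$ on $|\zeta|\ge1$. Therefore all roots lie in $|\zeta|<1$, and $V_m\to0$ geometrically.
\item Telescope the identity using $(\rho(E)v)_m=-\epsilon(\mu(E)v)_m$. This gives
\[
V_0^{T}GV_0=\sum_{m\ge0}\Bigl(\epsilon\bigl((\mu(E)v)_m\bigr)^2+\bigl((r(E)v)_m\bigr)^2\Bigr)\ge 0 .
\]
\item If this sum vanishes, then $\mu(E)v=0$, and therefore $\rho(E)v=0$. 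By coprimality there is a B\'ezout identity $1=\alpha\rho+\beta\mu$. Then $v=\alpha(E)\rho(E)v+\beta(E)\mu(E)v=0$, so $V_0=0$ and $G$ is positive definite.
\end{enumerate}
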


In what follows, we take $\mu(\zeta)=\zeta^n-\eta\zeta^{n-1}$. Following the analysis of BDF$p$ methods in \cite{nevanlinna1981multiplier}, we determine the admissible values of $\eta$ for selected pairs $(n,d)$ by the modified root-locus method. Define
\begin{equation}\label{eq:root_locus}
\left\{
\begin{array}{l}
x(\theta):=\mathrm{Re}\d\frac{\rho}{\mu}(e^{i\theta}),\\
y(\theta):=\sin\theta\ \mathrm{Im}\d\frac{\rho}{\mu}(e^{i\theta})-\cos\theta\ \mathrm{Re}\d\frac{\rho}{\mu}(e^{i\theta}),
\end{array}\right.
\end{equation}
and plot the curve $\{x(\theta),y(\theta)\}$ for $\theta\in [0,\pi]$. If the line $x=-\eta y$ lies below this modified root-locus, then condition \eqref{eq:stable_condition} holds. Through numerical computing, we can obtain the resulting values of $\eta$ for several pairs $(n,d)$, listed in Table~\ref{tab:LBRFDM_BDF_eta}.

\begin{table}[!h]
\caption{Values of $\eta$ for BDFs and LBRFDM}
\label{tab:LBRFDM_BDF_eta}
\centering{
\begin{tabular}{cccccccc}
\hline
&& \multicolumn{6}{c}{LBRFDM}\\\cmidrule{3-8}
Order&$\eta$ for BDF&$(n,d)$&$\eta$&$(n,d)$&$\eta$&$(n,d)$&$\eta$   \\ \hline
1& 0&(3,1)&0 &(5,1)&0 &(7,1)&0 \\
2& 0&(4,1)&0 &(6,1)&0 &(4,2)&0 \\
3& 0.083592&(5,2)&0.057022 &(11,2)&0.029111 &(5,3)&0.027588 \\
4& 0.287806&(6,3)&0.207659 &(8,3)&0.223084 &(6,4)&1.356062 \\
5& 0.815980&(7,4)&1.381170 &(9,4)&0.666533&(13,4)&0.793995 \\
6& 5.013188&(8,5)&-5.536362 &(10,5)& 2.420586 &(12,5)&16.004708 \\
\hline
\end{tabular}}
\end{table}

The following result, combined with Lemma~\ref{lem:g_stable} for $\mu(\zeta)=\zeta^n-\eta\zeta^{n-1}$, will play a central role in the stability analysis of the fully discrete PDE scheme.

\begin{lemma}\label{lem:lbrfdm_g_stable}
For every pair $(n,d)$ listed in Table~\ref{tab:LBRFDM_BDF_eta} with $\eta\in(0,1)$, the generating polynomial $\rho(\zeta)=\sum_{i=0}^n d_{ni}\zeta^i$ of the corresponding LBRFD method satisfies
\begin{equation*}
\mathrm{Re}\frac{\rho(\zeta)}{\zeta^n-\eta\zeta^{n-1}}>0, \quad |\zeta|>1.
\end{equation*}
\end{lemma}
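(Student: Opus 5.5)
The plan is to reduce the claim to a condition on the unit circle via the maximum principle for harmonic functions, and then check that condition with the modified root-locus curve \eqref{eq:root_locus}, one pair $(n,d)$ at a time.

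\textbf{Reduction to the unit circle.} Set $w=1/\zeta$, which maps $|\zeta|>1$ onto the punctured disc $0<|w|<1$. Write
\[
R(w):=\frac{\rho(1/w)}{\mu(1/w)}=\frac{\sum_{i=0}^n d_{ni}w^{n-i}}{1-\eta w}.
\]
Since $\eta\in(0,1)$, the pole $w=1/\eta$ lies outside the closed unit disc. Hence $R$ is a polynomial divided by a function that does not vanish on $|w|\le 1$, so $R$ is analytic on the closed disc. At $w=0$ we get $R(0)=d_{nn}>0$ (Table~\ref{tab:LBRFDM_coef}), so the point $\zeta=\infty$ causes no trouble. $\mathrm{Re}\,R$ is harmonic on $|w|<1$ and continuous up to the boundary. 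By the minimum principle it therefore suffices to show $\mathrm{Re}\,R(e^{-i\theta})\ge 0$ for all $\theta$, together with strict positivity in the interior. Strictness follows because $\mathrm{Re}\,R$ is nonconstant (for instance, $\rho$ is not a constant multiple of $\mu$); an open-mapping argument then upgrades the boundary inequality to a strict interior one. Consistency gives $\rho(1)=0$ (the rows of the differentiation matrix sum to zero), so equality does occur at $\theta=0$, and the boundary inequality can only be non-strict there.

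\textbf{Boundary inequality.} On $|\zeta|=1$ we have $\mu(e^{i\theta})=e^{i(n-1)\theta}(e^{i\theta}-\eta)$, and the symmetry $\overline{\rho(e^{i\theta})}=\rho(e^{-i\theta})$ reduces everything to $\theta\in[0,\pi]$. Instead of dividing by $\mu$, I would work with the quantity $A(\theta)=\mathrm{Re}\big[\rho(e^{i\theta})\,e^{-i(n-1)\theta}(e^{-i\theta}-\eta)\big]$. Its sign equals the sign of $\mathrm{Re}(\rho/\mu)$, since the two differ by the factor $|e^{i\theta}-\eta|^2>0$. Writing $\rho/(\zeta^{n-1}\cdot\zeta)=\rho/\zeta^n$ and separating the $\eta$-term gives
\[
\mathrm{Re}\frac{\rho}{\zeta^{n-1}}\overline{(\zeta-\eta)}=\mathrm{Re}\frac{\rho}{\zeta^n}-\eta\,\mathrm{Re}\frac{\rho}{\zeta^{n-1}},
\]
and for $\eta=0$ this is exactly $x(\theta)$. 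Relating the $\eta$-coefficient to $y(\theta)$ (using $\zeta^{-(n-1)}=\zeta\cdot\zeta^{-n}$ and expanding $\cos\theta,\sin\theta$) turns the condition into $x(\theta)+\eta\,\tilde y(\theta)\ge 0$ with an explicit trigonometric $\tilde y$. This is precisely the statement that the line $x=-\eta y$ lies on the correct side of the modified root locus. $A(\theta)$ is a trigonometric polynomial of degree $n$ with rational coefficients computable from Table~\ref{tab:LBRFDM_coef}. So for each listed pair with $\eta\in(0,1)$, namely $(5,2),(11,2),(5,3),(6,3),(8,3),(9,4),(13,4)$ with the tabulated $\eta$, plus the $\eta=0$ cases, I would substitute $c=\cos\theta$. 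Then $A$ becomes $(1-c)$ times a polynomial $q(c)$, and I would verify $q>0$ on $[-1,1]$ rigorously, by Sturm sequences or interval arithmetic.

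\textbf{Main obstacle.} The main obstacle is this last, case-by-case positivity verification. It is genuinely computational, and the tabulated $\eta$ values are truncated decimals, so margins may be tight near the tangency point. I would replace each $\eta$ by a nearby rational number strictly inside the admissible interval and certify $q>0$ exactly. The $\eta=0$ cases are the easy ones: there the condition is $A$-stability of the scheme, which matches the $90^\circ$ entries in Table~\ref{tab:LBRFDM_BDF}.
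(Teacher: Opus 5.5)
Your reduction is sound, and it is the paper's own route made rigorous. The paper offers nothing beyond the modified root-locus criterion \eqref{eq:root_locus}, checked numerically. Your minimum-principle argument for $\mathrm{Re}\,R$ on $|w|\le 1$, with $R$ analytic there since $1/\eta>1$, is exactly what justifies passing from $|\zeta|=1$ to $|\zeta|>1$.

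The gap is the final certification step. For one of the pairs you list, the boundary inequality is simply false, so no Sturm-sequence or interval check can succeed, and this has nothing to do with rounding. For $(n,d)=(5,3)$, \eqref{eq:bri_weight_equispace} gives weights proportional to $(-1,4,-7,7,-4,1)$, hence
\[
\rho(\zeta)=-\tfrac15+\zeta-\tfrac73\zeta^2+\tfrac72\zeta^3-4\zeta^4+\tfrac{61}{30}\zeta^5,\qquad \rho(-i)=\tfrac{7}{15}(-4+i),\qquad \mu(-i)=-(\eta+i),
\]
and therefore
\[
\mathrm{Re}\,\frac{\rho(-i)}{\mu(-i)}=\frac{7}{15}\cdot\frac{4\eta-1}{1+\eta^{2}} .
\]
This is negative for every $\eta<1/4$, and about $-0.415$ for the tabulated $\eta=0.027588$. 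In your notation, $q(0)=\tfrac{7}{15}(4\eta-1)<0$. By continuity, $\mathrm{Re}\,\rho/\mu<0$ at $\zeta=-(1+\varepsilon)i$ for small $\varepsilon>0$, so the lemma as stated is false for $(5,3)$.

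A cheap sanity check would have flagged this. Since $\rho(e^{i\theta})/e^{in\theta}$ is the boundary locus, $\max_\theta|\arg(\rho/\zeta^n)|=\pi-\alpha$, where $\alpha$ is the $A(\alpha)$ angle. Also $|\arg(1-\eta/\zeta)|\le\arcsin\eta$ on $|\zeta|=1$. Together these force $\eta\ge\cos\alpha$. Table~\ref{tab:LBRFDM_BDF} gives $\alpha\approx 72.73^\circ$ for $(5,3)$, i.e.\ $\eta\gtrsim 0.3$. The other pairs pass this crude test, which does not certify them.

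Your rounding worry is also justified, and it too breaks the statement rather than only the proof. The tabulated values are tangency values. For $(5,2)$ your polynomial is $q=P+\eta Q$ with
\[
P(c)=\tfrac{16}{5}c^4-\tfrac{14}{5}c^3-\tfrac{22}{15}c^2+\tfrac{8}{15}c+\tfrac{8}{15},\qquad Q(c)=-\tfrac85c^3+\tfrac75c^2+\tfrac13c+\tfrac{28}{15}.
\]
With $\eta=0.057022$ exactly, $q$ dips to about $-4\times10^{-7}$ near $c\approx 0.8697$; the true threshold is about $0.0570222$. So the entry is rounded just below admissibility.

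Replacing $\eta$ by a certified nearby rational, as you propose, therefore proves an amended lemma, not the stated one. For $(5,3)$ there is no nearby admissible value at all. The statement has to be corrected before it can be proved. Drop $(5,3)$ or assign it an admissible $\eta\in(0,1)$, which must exceed about $0.3$. Replace the remaining entries by values certified to lie strictly inside the admissible set. This amendment does not harm Theorem~\ref{thm:full_stable}, which only needs some $\eta\in(0,1)$ with the multiplier property.
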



\section{One-dimensional Case}
\label{sec:onedimension}

In this section, we propose a fully discrete scheme for the one-dimensional linear fourth-order equation \eqref{eq:original_problem1} by coupling a barycentric rational interpolation finite difference (BRIFD) method for temporal discretization with an interior penalty DG (IPDG) method for spatial discretization. 

Consider the following one-dimensional problem \eqref{eq:original_problem}:
\begin{equation}\label{eq:original_problem}
\left\{\begin{array}{l}
u_t-u_{xxxx}=f(x,t),\quad x\in (a,b),\ t\in (0,T],\\[2pt]
u(a,t)=f_a(t),\quad u(b,t)=f_b(t),\\[2pt]
u_x(a,t)=g_a(t),\quad u_x(b,t)=g_b(t),\\[2pt]
u(x,0)=u_0(x),\quad x\in [a,b].
\end{array}\right.
\end{equation}

\subsection{Spatial Discretization}

In this section, we discretize the spatial derivatives of \eqref{eq:original_problem} by a mixed interior penalty discontinuous Galerkin (IPDG) method and establish the $L^2$-stability and error estimate of the resulting semi-discrete scheme. 

By introducing the auxiliary variable $v=-u_{xx}$, problem \eqref{eq:original_problem} can be rewritten as the following second-order system:
\begin{equation}\label{eq:original_problem_uv}
u_t-v_{xx}=f(x,t),\quad v+u_{xx}=0,\quad x\in \Omega=(a,b).
\end{equation}

Let $a=x_0<x_1<\cdots<x_M=b$ be a partition of $(a,b)$. Denote $I_j=(x_{j-1},x_j)$, $h_j=x_j-x_{j-1}$, and $h=\max_{1\leq j\leq M}h_j$. Assume that there exist positive constants $\kappa_1,\kappa_2$ such that
\[
\kappa_2\leq h_i/h_j\leq \kappa_1,\quad i,j=1,2,\ldots,M.
\]
Let $\mathcal{T}_h=\{I_j\}_{j=1}^M$ be the collection of all subintervals, $\mathcal{N}_h=\{x_j\}_{j=0}^M$ the set of all nodes, $\mathcal{N}_h^{i}=\{x_j\}_{j=1}^{M-1}$ the set of interior nodes, and $\mathcal{N}_h^0=\{a,b\}$ the set of boundary nodes. We define the broken Sobolev space
\[
H^s(\mathcal{T}_h)=\{w\in L^2(a,b): w|_{I_j}\in H^s(I_j),\ \forall I_j\in \mathcal{T}_h\},
\]
and the discontinuous finite element space
\[
V_h^k:=\{w\in L^2(a,b): w|_{I_j}\in P_k(I_j),\ \forall I_j\in \mathcal{T}_h\},
\]
where $P_k(I_j)$ denotes the space of polynomials of degree at most $k$ on $I_j$.

 For $w\in H^s(\mathcal{T}_h)$ with $s\ge 1$, the jump and average at an interior node $x_j\in\mathcal{N}_h^{i}$ are defined by
\begin{align*}
\llbracket w\rrbracket_j =w(x_j^-)-w(x_j^+), \quad \llkh w\rrkh_j=\frac{1}{2}\left(w(x_j^-)+w(x_j^+)\right).
\end{align*}
On the boundary nodes, we set
\[
\llbracket w\rrbracket_0 =-w(a),\quad \llkh w\rrkh_0= w(a),\quad \llbracket w\rrbracket_M =w(b),\quad \llkh w\rrkh_M=w(b).
\]
We also define the discrete energy norm
\begin{equation}\label{eq:energy_norm}
\interleave w\interleave_h:=\left(\sum_{I_j\in\mathcal{T}_h}\| w_x\|_{I_j}^2+\sum_{x_j\in\mathcal{N}_h}\frac{1}{h} \llbracket w\rrbracket_j^2\right)^{1/2}.
\end{equation}

For non-homogeneous Dirichlet boundary conditions,  penalty terms acting only on the boundary are needed to ensure stability \cite{liu2018mixed,fu2025analysis,huang2026penalty}. Following \cite{huang2026penalty}, the semi-discrete variational formulation of \eqref{eq:original_problem_uv} reads: find $u_h, v_h\in V_h^k$ such that, for each time $t$,
\begin{subequations}\label{eq:semi_discrete}
\begin{align}
  (u_{ht},q_h)+B(v_h,q_h)+J(u_h,q_h)=L_1(q_h), \quad \forall q_h\in V_h^k,\label{eq:semi_discretea}\\
 -B(w_h,u_h)+(v_h,w_h)=L_2(w_h), \quad \forall w_h\in V_h^k, \label{eq:semi_discreteb}
\end{align}
\end{subequations}
where
\begin{align*}
L_1(q)&=\int_a^b f(x,t)q\,\mathrm{d}x+\frac{\beta_0}{h}g_a(t)q_x(a)+\frac{\beta_0}{h}g_b(t)q_x(b),\\
L_2(w)&=f_b(t) w_x(b)-f_a(t) w_x(a)-g_b(t)w(b)+g_a(t)w(a),\\
B(w,q)&=\sum_{I_j\in \mathcal{T}_h}\int_{I_j} w_x q_x \mathrm{d} x
-\sum_{x_j\in\mathcal{N}_h}\llkh w_x\rrkh_j \llbracket q\rrbracket_j-\sum_{x_j\in\mathcal{N}_h^i}\llbracket w\rrbracket_j \llkh q_x\rrkh_j,\\
J(w,q)&=\frac{\beta_1}{h}w_x(a)q_x(a)+\frac{\beta_1}{h}w_x(b)q_x(b).
\end{align*}

\begin{theorem}[$L^2$-stability]\label{thm:L2_stability}
For $\beta_1\ge 0$, the numerical solution satisfies
\begin{equation}\label{L2_stable}
\|u_h-\tilde{u}_h\|\leq \|u_0-\tilde{u}_0\|,
\end{equation}
where $u_h$ and $\tilde{u}_h$ solve \eqref{eq:semi_discrete} for the same boundary data and source term, but with initial data $u_0$ and $\tilde{u}_0$, respectively.
\end{theorem}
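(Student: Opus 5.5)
Since the scheme \eqref{eq:semi_discrete} is linear and both solutions share the same boundary data and source term, the right-hand sides $L_1$ and $L_2$ cancel when we subtract. We therefore set $e_u=u_h-\tilde u_h$ and $e_v=v_h-\tilde v_h$, where $\tilde v_h$ is the auxiliary variable paired with $\tilde u_h$. These differences satisfy the homogeneous system
\begin{align*}
(e_{u,t},q_h)+B(e_v,q_h)+J(e_u,q_h)&=0,\quad \forall q_h\in V_h^k,\\
-B(w_h,e_u)+(e_v,w_h)&=0,\quad \forall w_h\in V_h^k,
\end{align*}
with initial value $e_u(0)$ equal to the (projected) difference $u_0-\tilde u_0$.

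The plan is an energy argument with carefully chosen test functions. We take $q_h=e_u$ in the first equation and $w_h=e_v$ in the second, then add the two identities. The mixed terms $B(e_v,e_u)$ and $-B(e_v,e_u)$ cancel exactly. This cancellation is the reason the mixed formulation uses $B(\cdot,\cdot)$ in the first equation and its transpose, with arguments swapped, in the second. What remains is
\[
\frac12\frac{\mathrm d}{\mathrm dt}\|e_u\|^2+\|e_v\|^2+\frac{\beta_1}{h}\left(e_{u,x}(a)^2+e_{u,x}(b)^2\right)=0.
\]

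For $\beta_1\ge 0$ the last two terms on the left are nonnegative, so $\frac{\mathrm d}{\mathrm dt}\|e_u\|^2\le 0$. Integrating in time gives $\|e_u(t)\|\le\|e_u(0)\|$. If the discrete initial datum is the $L^2$ projection of $u_0$, then $\|P_h(u_0-\tilde u_0)\|\le\|u_0-\tilde u_0\|$, and this yields \eqref{L2_stable}.

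The only point that needs care is checking that the $B$ terms cancel exactly. No symmetry of $B$ is needed for this: the first equation contributes $B(e_v,e_u)$, and the second, with $w_h=e_v$, contributes $-B(e_v,e_u)$. These are literally the same bilinear form evaluated with the same argument order, so the asymmetric interior and boundary flux terms in $B$ cause no difficulty. The argument also quietly uses that the semi-discrete system is well posed, meaning $e_v$ is determined by $e_u$ through the second equation. We take this for granted, as the statement does.
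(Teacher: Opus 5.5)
Your argument is correct and is essentially the paper's proof: subtract the two semi-discrete systems so that $L_1,L_2$ drop out, test with $q_h=e_u$ and $w_h=e_v$, add so the $B$-terms cancel, and use $\|e_v\|^2+J(e_u,e_u)\ge 0$ to get $\frac{\mathrm d}{\mathrm dt}\|e_u\|^2\le 0$. Your caveat about the initial data is a fair refinement that the paper glosses over, since it simply identifies $u_0$ with the discrete initial value; if the paper's projection $\Pi_h$ from \eqref{eq:projection} were used instead, it is not $L^2$-orthogonal, and the final step $\|\Pi_h(u_0-\tilde u_0)\|\le\|u_0-\tilde u_0\|$ would not hold in general.
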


\begin{proof}
Let $v_h$ and $\tilde{v}_h$ denote the auxiliary variables associated with $u_h$ and $\tilde{u}_h$, respectively, and set $e_h=u_h-\tilde{u}_h$, $e_h^*=v_h-\tilde{v}_h$. Since 
$u_h$ and $\tilde{u}_h$ have the same boundary data and source term, then $L_2(w_h)=0$ and $L_1(q_h)=0$.
Subtracting the two semi-discrete systems yields
\begin{subequations}\label{eq:semi_discrete_eh}
\begin{align}
  (e_{ht},q_h)+B(e_h^*,q_h)+J(e_h,q_h)=0, \quad \forall q_h\in V_h^k,\label{eq:semi_discrete_eha}\\
 -B(w_h,e_h)+(e_h^*,w_h)=0, \quad \forall w_h\in V_h^k. \label{eq:semi_discrete_ehb}
\end{align}
\end{subequations}
Taking $q_h=e_h$ in \eqref{eq:semi_discrete_eha} and $w_h=e_h^*$ in \eqref{eq:semi_discrete_ehb}, and adding the resulting equations, we obtain
\[
\frac{1}{2}\frac{\mathrm{d}}{\mathrm{d}t}\|e_h\|^2=-\|e_h^*\|^2-J(e_h,e_h)\leq 0,
\]
which implies \eqref{L2_stable}.
\end{proof}

To derive the error estimate, we introduce a global projection operator as in \cite{fu2025analysis,liu2018mixed}. Given $w\in H^2(\mathcal{T}_h)$ and $k\ge 1$, the projection $\Pi_hw\in V_h^k$ is defined by
\begin{subequations}\label{eq:projection}
\begin{align}
& \int_{I_j}(w-\Pi_hw)q_h\mathrm{d} x=0,\quad \forall q_h\in P_{k-2}(I_j),\ j=1,\ldots,M,\label{eq:projection_a}\\
&\llkh\Pi_hw\rrkh_j=\llkh w\rrkh_j,\quad j=1,\ldots,M-1, \label{eq:projection_b}\\
&\llkh(\Pi_hw)_x\rrkh_j=\llkh w_x\rrkh_j,\quad j=0,\ldots,M. \label{eq:projection_c}
\end{align}
\end{subequations}

From \cite{fu2025analysis,liu2018mixed}, we have the following results.


\begin{lemma}\label{lem:projection_approx}
For any $k\ge 1$, the projection $\Pi_h$ defined by \eqref{eq:projection} exists uniquely. Moreover, for $w\in H^{k+1}(\mathcal{T}_h)$,
\begin{equation}
\|w-\Pi_hw\|+h^s\|w-\Pi_hu\|_s+h^{1/2}\|w-\Pi_hw\|_{\mathcal{N}_h}\leq C_{pr} h^{k+1}\|u\|_{k+1},
\end{equation}
where $\|w\|_{\mathcal{N}_h}:=\left(\sum_{j=1}^N(w^-_j)^2+(w^+_{j-1})^2\right)^{1/2}$, $1\leq s\leq k$ is an integer, and positive constant $C_{pr}$ is independent of $h$ and $w$.
\end{lemma}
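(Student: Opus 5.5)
Since the conditions in \eqref{eq:projection} decouple only partially across elements, we will treat $\Pi_h$ as a global linear map on $V_h^k$ and prove existence and uniqueness by a dimension count plus a homogeneous-data argument. \eqref{eq:projection} imposes $M(k-1)$ moment conditions, $M-1$ conditions on averages of values and $M+1$ conditions on averages of derivatives. These total $M(k+1)=\dim V_h^k$, so the system is square. It therefore suffices to show that if $w=0$, then $\Pi_hw=0$.

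For uniqueness, set $\phi=\Pi_h0\in V_h^k$. On each $I_j$, $\phi$ is orthogonal to $P_{k-2}$. Hence $\phi|_{I_j}=\alpha_jL_{k,j}+\beta_jL_{k-1,j}$, with $L_{m,j}$ the Legendre polynomial mapped to $I_j$. Values and derivatives at the endpoints are explicit: $L_m(\pm1)=(\pm1)^m$ and $L_m'(\pm1)=(\pm1)^{m-1}m(m+1)/2$. The conditions $\llkh\phi\rrkh_j=0$ and $\llkh\phi_x\rrkh_j=0$ then give a linear recurrence. It couples $(\alpha_j,\beta_j)$ to $(\alpha_{j+1},\beta_{j+1})$ through a $2\times2$ transfer matrix depending on the ratio $h_j/h_{j+1}$. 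The boundary conditions $\phi_x(a)=0$ and $\phi_x(b)=0$ close the system. We will show that the global matrix is nonsingular, by reducing it via the recurrence to a scalar condition of the form $\det(\text{product of transfer matrices restricted to boundary directions})\neq0$. I expect this to be the \textbf{main obstacle}. The transfer matrices can have eigenvalues of modulus one or of either sign, and nonsingularity may depend on parity of $k$ and on the mesh ratio bounds $\kappa_1,\kappa_2$. I would first try uniform meshes, where the matrix is circulant-like away from the boundary and its symbol can be computed. I would then handle quasi-uniform meshes by a perturbation argument or by checking the sign structure, following \cite{liu2018mixed,fu2025analysis}.

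For the approximation estimate, we will use the standard argument of comparing with a local projection. Let $P_h w$ denote the local $L^2$-type projection, with $P_{k-2}$ moments and endpoint values matched, which is locally defined and has optimal $O(h^{k+1})$ error in $L^2$, $H^s$ and at nodes by Bramble--Hilbert and scaling. Then $\Pi_hw-P_hw=\Pi_h(w-P_hw)$ by linearity and uniqueness. Its data are only the average mismatches $\llkh (w-P_hw)_x\rrkh_j$, which are $O(h^{k-1/2}|w|_{k+1})$ per node by the trace inequality. The stability of the inverse map from the data to the Legendre coefficients $(\alpha_j,\beta_j)$, obtained from the nonsingularity argument with a bound uniform in $h$ after scaling, will give $\|\Pi_h(w-P_hw)\|\le Ch\,(\sum_j h\,|\text{data}_j|^2)^{1/2}\le Ch^{k+1}\|w\|_{k+1}$. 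The derivative and nodal bounds then follow by inverse inequalities on each $I_j$, combined with the triangle inequality.

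The uniformity of the inverse bound is essentially the same issue as the nonsingularity. If a uniform bound fails, the fallback is a weaker estimate with a factor depending on $M$. So the crucial step to settle first is the spectral analysis of the transfer-matrix recurrence.
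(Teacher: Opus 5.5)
The paper gives no argument for this lemma; it simply imports it from \cite{liu2018mixed,fu2025analysis}. Your reduction is sound as far as it goes:
\begin{itemize}
\item the dimension count is right;
\item the local projection $P_h$ (moments against $P_{k-2}$ plus both endpoint values) is well defined with optimal local errors;
\item $\phi:=\Pi_h(w-P_hw)$ has zero moments and zero value averages, so its only data are $\delta_j:=\llkh (w-P_hw)_x\rrkh_j=O(h^{k-1/2}|w|_{k+1})$.
\end{itemize}
But the whole content of the lemma is the step you leave open: injectivity of the homogeneous system together with an $h$-uniform bound on its inverse. ``Try uniform meshes, then perturb or follow the references'' is not a proof, and the fallback with an $M$-dependent constant would destroy the rate.

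Worse, that step cannot be closed as stated, because the lemma is false for $k=1$. Then $P_{k-2}=\{0\}$. The $M+1$ conditions $\llkh\phi_x\rrkh_j=0$, $j=0,\dots,M$, act on only $M$ constant slopes and force them to vanish. After that, $\phi|_{I_j}\equiv(-1)^j$ satisfies every homogeneous condition. In your transfer-matrix language, the left boundary condition kills $\alpha_1$, the recurrence keeps all $\alpha_j=0$ and leaves $\beta_j$ alternating and free, and the right boundary condition is then automatically satisfied. So $\Pi_h$ has a nontrivial kernel and, the system being square, is not defined for general $w$. The hypothesis must be $k\ge 2$.

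For $k\ge 2$ there is a short route that avoids any spectral or perturbation analysis. Since $\phi|_{I_j}\in\mathrm{span}\{L_{k,j},L_{k-1,j}\}$ is determined by its two end values, use the value-average conditions to parametrize $\phi$ by $u_j:=\phi(x_j^-)=-\phi(x_j^+)$ for $1\le j\le M-1$, together with $u_0:=-\phi(a)$ and $u_M:=\phi(b)$. Your endpoint formulas for $L_m$ then give, with $s=(-1)^k$,
\[
\phi_x(x_j^-)=\frac{k}{h_j}\,(k\,u_j-s\,u_{j-1}),\qquad \phi_x(x_j^+)=\frac{k}{h_{j+1}}\,(k\,u_j-s\,u_{j+1}).
\]
Hence the $M+1$ derivative conditions form a symmetric tridiagonal system $\mathcal A u=b$ with $|b_j|\le 2|\delta_j|/k$. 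Its entries are $\mathcal A_{jj}=k(h_j^{-1}+h_{j+1}^{-1})$ and $\mathcal A_{j,j+1}=\mathcal A_{j+1,j}=-s\,h_{j+1}^{-1}$, using the convention $h_0^{-1}=h_{M+1}^{-1}=0$.

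Gershgorin gives $\lambda_{\min}(\mathcal A)\ge (k-1)/h$, so the system is uniquely solvable on any mesh and $\|u\|_{\ell^2}\le \frac{2h}{k(k-1)}\|\delta\|_{\ell^2}$. Combined with $\|\phi\|_{I_j}^2\le C h_j(u_{j-1}^2+u_j^2)$, this yields $\|\phi\|\le Ch^{k+1}\|w\|_{k+1}$. That is exactly the uniform inverse bound your scaling argument needs. The nodal bound is immediate, since the traces of $\phi$ are $\pm u_j$. The $H^s$ bound follows from elementwise inverse inequalities and the mesh-ratio assumption. At $k=1$ the diagonal dominance degenerates to equality, and $u_j=(-1)^j$ is precisely the kernel found above.
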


\begin{theorem}[Error estimate]\label{thm:semi_error}
Let $u$ be the exact solution of \eqref{eq:original_problem} with $v=-u_{xx}$, and assume that $u$ is sufficiently smooth so that quantities such as 
$\sup_{t\in[0,T]}\|u(t)\|_{k+3}$ and $\sup_{t\in[0,T]}\|u_t(t)\|_{k+1}$ are uniformly bounded in time.
Let $(u_h,v_h)$ be the solution of the semi-discrete scheme \eqref{eq:semi_discrete}. Then, for $k\ge 1$, there exists a positive constant $C_{\mathrm{sp}}$, which depends on $t$ polynomially with exponent at most $3/2$ but is independent of $h$ and $\tau$, such that
\begin{equation}\label{eq:semi_error_main}
\|u_h(t)-u(t)\| + \int_0^t \|v(s)-v_h(s)\|\,\mathrm{d}s \leq C_{\mathrm{sp}}\,h^{k+1}.
\end{equation}
\end{theorem}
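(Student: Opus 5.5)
We follow the standard error-equation approach for mixed DG schemes built on the global projection \eqref{eq:projection}. First we check consistency. The exact pair $(u,v)$ with $v=-u_{xx}$ satisfies \eqref{eq:semi_discrete} for all $q_h,w_h\in V_h^k$. To see this, integrate by parts elementwise. For smooth $u,v$ the interior jumps $\llbracket u\rrbracket_j$ and $\llbracket v\rrbracket_j$ vanish. The boundary terms in $B$ then produce exactly $L_2$, using $u(a)=f_a$, $u(b)=f_b$, $u_x(a)=g_a$, $u_x(b)=g_b$. The penalty $J(u,q_h)$ equals the extra $\beta$-terms in $L_1$ provided $\beta_0=\beta_1$, which we assume. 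Subtracting gives the error equations
\begin{align*}
(e_{ut},q_h)+B(e_v,q_h)+J(e_u,q_h)=0,\qquad -B(w_h,e_u)+(e_v,w_h)=0,
\end{align*}
with $e_u=u-u_h$ and $e_v=v-v_h$. We then split each error as $e_u=(u-\Pi_hu)+(\Pi_hu-u_h)=:\eta_u+\xi_u$, and likewise $e_v=\eta_v+\xi_v$, where $\xi_u,\xi_v\in V_h^k$.

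Next comes the energy identity. We take $q_h=\xi_u$ and $w_h=\xi_v$ and add the two error equations. The $B(\xi_v,\xi_u)$ terms cancel, which is the same mechanism as in Theorem~\ref{thm:L2_stability}. The result is
\[
\tfrac12\tfrac{d}{dt}\|\xi_u\|^2+\|\xi_v\|^2+J(\xi_u,\xi_u)=-(\eta_{ut},\xi_u)-B(\eta_v,\xi_u)+B(\xi_v,\eta_u)-(\eta_v,\xi_v)-J(\eta_u,\xi_u).
\]
The key step is to show that the projection kills most of the $B$-terms. Integrating by parts elementwise,
\[
B(w,q)=-\sum_j\int_{I_j}w\,q_{xx}+\text{node terms involving }\llbracket w\rrbracket,\ \llkh w\rrkh,\ \llkh w_x\rrkh,\ \llbracket q\rrbracket,\ \llkh q_x\rrkh .
\]
We apply this with $w=\eta$ and $q\in V_h^k$. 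The volume term vanishes by \eqref{eq:projection_a}, since $q_{xx}\in P_{k-2}$. The node terms carrying $\llkh\eta_x\rrkh$ vanish by \eqref{eq:projection_c}, and the interior averages $\llkh\eta\rrkh$ vanish by \eqref{eq:projection_b}. What remains are interior jump terms $\llbracket\eta\rrbracket$ paired with averages of $q_x$, together with boundary values $\eta(a),\eta(b)$ paired with $q_x$. The term $B(\xi_v,\eta_u)$ is handled the same way, with the roles of the two arguments exchanged.

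We expect this residual to be the main obstacle. The jump $\llbracket\eta\rrbracket$ is not controlled by the projection conditions. To absorb the resulting terms we would use trace and inverse inequalities, $|q_x(x_j)|\le Ch^{-3/2}\|q\|_{I_j}$, together with the bound $\|\eta\|_{\mathcal N_h}\le Ch^{k+1/2}$ from Lemma~\ref{lem:projection_approx}. The boundary terms paired with $\xi_{ux}(a)$ and $\xi_{ux}(b)$ would be absorbed by $J(\xi_u,\xi_u)$, since $\beta_1>0$ gives control of $h^{-1/2}|\xi_{ux}|$ at the endpoints. The terms paired with $\xi_v$ would be absorbed by $\|\xi_v\|^2$ through Young's inequality. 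If a naive count loses powers of $h$, we would instead rely on a superconvergence property of $\Pi_h$ in 1D, as in \cite{liu2018mixed,fu2025analysis}: for this projection the node residuals are either identically zero or of order $h^{k+1}$ after inverse estimates. The remaining terms $(\eta_{ut},\xi_u)$, $(\eta_v,\xi_v)$ and $J(\eta_u,\xi_u)$ are routine, using $\|\eta_{ut}\|\le Ch^{k+1}\|u_t\|_{k+1}$ and $\|\eta_v\|\le Ch^{k+1}\|u\|_{k+3}$. The contribution of $J(\eta_u,\xi_u)$ is absorbed into $J(\xi_u,\xi_u)$ using $h^{1/2}|\eta_{ux}|$ at the nodes.

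Finally, we close the argument. After absorption the inequality becomes
\[
\tfrac{d}{dt}\|\xi_u\|^2+\|\xi_v\|^2\le Ch^{k+1}\|\xi_u\|+Ch^{2k+2}.
\]
Writing $\tfrac{d}{dt}\|\xi_u\|^2=2\|\xi_u\|\tfrac{d}{dt}\|\xi_u\|$ and integrating, we get a bound on $\|\xi_u(t)\|$ that is linear in $t$, plus the initial projection error $\|\xi_u(0)\|$. We start with $u_h(0)=\Pi_hu_0$, or else we note that $\|\xi_u(0)\|=O(h^{k+1})$. Integrating the $\|\xi_v\|^2$ term in time and applying Cauchy--Schwarz gives
\[
\int_0^t\|\xi_v\|\le \sqrt t\Big(\int_0^t\|\xi_v\|^2\Big)^{1/2}\le C\,t^{3/2}\,h^{k+1},
\]
which is the source of the polynomial factor $t^{3/2}$ in $C_{\mathrm{sp}}$. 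The triangle inequality with Lemma~\ref{lem:projection_approx} then yields \eqref{eq:semi_error_main}.
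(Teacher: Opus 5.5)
\textbf{There is a genuine gap at the key step.} Your overall plan is the paper's: error equations, the $\Pi_h$ splitting, testing with $(\xi_u,\xi_v)$, absorbing endpoint terms into $J(\xi_u,\xi_u)$ by Young's inequality, then integrating in time. Your remark that consistency requires $\beta_0=\beta_1$ is also correct.

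The gap is in the step you call the main obstacle, and as written it fails. You assert that interior terms $\llbracket\eta_v\rrbracket_j\llkh\xi_{ux}\rrkh_j$ survive in $B(\eta_v,\xi_u)$, and you propose to bound them with trace and inverse inequalities. That only yields $\bigl(\sum_j\llbracket\eta_v\rrbracket_j^2\bigr)^{1/2}\bigl(\sum_j\llkh\xi_{ux}\rrkh_j^2\bigr)^{1/2}\le Ch^{k+1/2}h^{-3/2}\|\xi_u\|=Ch^{k-1}\|\xi_u\|$. Your fallback to an unspecified ``superconvergence property'' is not an argument. Similarly, absorbing $J(\eta_u,\xi_u)$ into $J(\xi_u,\xi_u)$ leaves $\frac{\beta_1}{h}\eta_{ux}(a)^2$. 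Since $\|\eta_u\|_1\le Ch^k$ and $\|\eta_u\|_2\le Ch^{k-1}$, this is only $O(h^{2k-2})$. Either route ends at $h^{k-1}$, not $h^{k+1}$.

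\textbf{What is missing is the exact computation that makes these terms vanish}; the paper's proof rests on it.

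First, integrate $\int_{I_j}w_xq_x$ by parts with $w=\eta_v$ and $q=\xi_u$. At each interior node this produces $\llbracket w\rrbracket_j\llkh q_x\rrkh_j+\llkh w\rrkh_j\llbracket q_x\rrbracket_j$, and at the ends it produces $w(b)q_x(b)-w(a)q_x(a)$. The first piece cancels \emph{exactly} against the symmetrizing term $-\sum_{\mathcal{N}_h^i}\llbracket w\rrbracket_j\llkh q_x\rrkh_j$ of $B$. You dropped this cancellation, and with it the pairing $\llkh w\rrkh\llbracket q_x\rrbracket$. The remaining terms are handled by the projection:
\begin{itemize}
\item the volume term vanishes by \eqref{eq:projection_a};
\item $\llkh\eta_v\rrkh_j\llbracket\xi_{ux}\rrbracket_j$ vanishes by \eqref{eq:projection_b};
\item all consistency terms vanish by \eqref{eq:projection_c}.
\end{itemize}
Hence $B(\eta_v,\xi_u)=\eta_v(b)\xi_{ux}(b)-\eta_v(a)\xi_{ux}(a)$ exactly.

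Second, integrate by parts in the other direction. This gives $B(\xi_v,\eta_u)=-\sum_j\int_{I_j}\xi_{vxx}\eta_u+\sum_{\mathcal{N}_h^i}\bigl(\llbracket\xi_{vx}\rrbracket_j\llkh\eta_u\rrkh_j-\llbracket\xi_v\rrbracket_j\llkh\eta_{ux}\rrkh_j\bigr)$. There are no endpoint terms, because the consistency sum runs over all of $\mathcal{N}_h$ while the symmetry sum runs only over $\mathcal{N}_h^i$. So $B(\xi_v,\eta_u)=0$ by \eqref{eq:projection_a}--\eqref{eq:projection_c}. This is essential: a surviving endpoint term $\xi_{vx}(a)\eta_u(a)$ would cost $h^{-3/2}h^{k+1/2}$, which is exactly the two-order loss in the 2D analysis.

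Third, \eqref{eq:projection_c} at $j=0,M$ gives $\eta_{ux}(a)=\eta_{ux}(b)=0$, so $J(\eta_u,\xi_u)=0$.

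With these three identities the rest of your argument goes through as in the paper. One small fix in the closing step: the additive $Ch^{2k+2}$ term prevents dividing by $\|\xi_u\|$. Bound $\max_{[0,t]}\|\xi_u\|$ instead; this still gives linear growth in $t$ and hence the $t^{3/2}$ factor.
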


\begin{proof}
Let $e_u = u - u_h$ and $e_v = v - v_h$. Since $u$ and $v$ satisfy the semi-discrete scheme \eqref{eq:semi_discrete}, we have the error equations
\begin{subequations}\label{eq:semi_discrete_orth}
\begin{align}
  (e_{ut}, q_h) + B(e_v, q_h) + J(e_u, q_h) &= 0, \quad \forall q_h \in V_h^k, \label{eq:semi_discrete_ortha} \\
  -B(w_h, e_u) + (e_v, w_h) &= 0, \quad \forall w_h \in V_h^k. \label{eq:semi_discrete_orthb}
\end{align}
\end{subequations}

We decompose the errors as
\[
e_u = (u - \Pi_h u) - (u_h - \Pi_h u) =: \eta_u - \xi_u, \qquad
e_v = (v - \Pi_h v) - (v_h - \Pi_h v) =: \eta_v - \xi_v.
\]
Taking $q_h = \xi_u$ and $w_h = \xi_v$ in \eqref{eq:semi_discrete_ortha} and \eqref{eq:semi_discrete_orthb}, respectively, and adding the two resulting equations, we obtain
\begin{equation}\label{eq:err1}
(\xi_{ut}, \xi_u) + (\xi_v, \xi_v) + J(\xi_u, \xi_u)
= (\eta_{ut}, \xi_u) + (\eta_v, \xi_v) + J(\eta_u, \xi_u) + B(\eta_v, \xi_u) - B(\xi_v, \eta_u).
\end{equation}

Using the definition of $\Pi_h$ in \eqref{eq:projection}, integration by parts, and a straightforward calculation, we obtain
\begin{equation}\label{eq:Pih1}
J(\eta_u, \xi_u) = 0, \qquad
B(\eta_v, \xi_u) = \eta_v(x_N)\xi_{ux}(x_N) - \eta_v(x_0)\xi_{ux}(x_0), \qquad
B(\xi_v, \eta_u) = 0.
\end{equation}

Applying Young's inequality yields
\begin{equation}\label{eq:err2}
\begin{aligned}
\eta_v(x_N)\xi_{ux}(x_N) &\leq \frac{h}{2\beta_1}\bigl(\eta_v(x_N)\bigr)^2 + \frac{\beta_1}{2h}\bigl(\xi_{ux}(x_N)\bigr)^2, \\
-\eta_v(x_0)\xi_{ux}(x_0) &\leq \frac{h}{2\beta_1}\bigl(\eta_v(x_0)\bigr)^2 + \frac{\beta_1}{2h}\bigl(\xi_{ux}(x_0)\bigr)^2.
\end{aligned}
\end{equation}

Consequently, by the trace inequality and the approximation property of the projection in Lemma~\ref{lem:projection_approx}, we have
\begin{equation}\label{eq:err3}
\bigl(\eta_v(x_N)\bigr)^2 \leq C_{\mathrm{pr}} h^{2k+1}\|u\|_{k+1}^2, \qquad
\bigl(\eta_v(x_0)\bigr)^2 \leq C_{\mathrm{pr}} h^{2k+1}\|u\|_{k+1}^2,
\end{equation}
where the constant $C_{\mathrm{pr}} > 0$ is independent of $h$.

Combining \eqref{eq:err1}--\eqref{eq:err3} with the Cauchy--Schwarz inequality and Lemma~\ref{lem:projection_approx}, we obtain
\begin{equation}\label{eq:err4}
\frac{1}{2}\frac{\mathrm{d}}{\mathrm{d}t}\|\xi_u\|^2 + \|\xi_v\|^2
\leq C h^{k+1} \|\xi_u\| + C h^{k+1} \|\xi_v\|.
\end{equation}

Next, following the argument in the proof of Theorem 2.2 in \cite{liu2018mixed}, we arrive at
\begin{equation}\label{eq:err5}
\|\xi_u\| + \int_0^t \|\xi_v\|\,\mathrm{d}s \leq C h^{k+1}.
\end{equation}

Finally, applying Lemma~\ref{lem:projection_approx} and the triangle inequality yields the desired estimate \eqref{eq:semi_error_main}. This completes the proof.
\end{proof}

\subsection{Fully discrete scheme}

In this section, we apply the LBRFD method \eqref{eq:FD_form}, introduced in Subsection~\ref{sec:LBRIFD}, to discretize the temporal derivative in the semi-discrete problem \eqref{eq:semi_discrete}. 

For notational convenience, we omit the subscript $n$ and write $d_j:=d_{n(n-j)}$. Then \eqref{eq:FD_form} becomes
\begin{equation}\label{eq:FD_form3}
\sum_{j=0}^nd_jy_{m-j}=\tau g(y_m),\quad m=n+1,\ldots,N.
\end{equation}

Substituting $t=t_m$ into \eqref{eq:semi_discrete} and replacing the time derivative by the LBRFD formula \eqref{eq:FD_form3}, we obtain, for all $(q_h,w_h)\in V_h^k\times V_h^k$,
\begin{subequations}\label{eq:full_discrete0}
\begin{align}
 \frac{1}{\tau}\sum_{j=0}^nd_j (u_h(t_{m-j}),q_h)+ B(v_h(t_m),q_h)+ J(u_h(t_m),q_h)&= L_1(q_h)(t_m)+ (R^m,q_h), \label{eq:full_discrete0a}\\
 -B(w_h,u_h(t_m))+(v_h(t_m),w_h)&=L_2(w_h)(t_m),  \label{eq:full_discrete0b}
\end{align}
\end{subequations}
where $R^m$ is the truncation error in replacing $u_{h}(\cdot,t_m)$ by the value at $t_m$ of the time-dependent linear barycentric rational interpolant of $u_h$. Setting $u_h^m:=u_h(x,t_m)$ and $v_h^m:=v_h(x,t_m)$, and neglecting the truncation error, we arrive at the fully discrete scheme: find $(u_h^m,v_h^m)\in V_h^k\times V_h^k$, $m=n+1,\ldots,N$, such that
\begin{subequations}\label{eq:full_discrete}
\begin{align}
 \frac{1}{\tau}\sum_{j=0}^nd_j (u_h^{m-j},q_h)+ B(v_h^m,q_h)+ J(u_h^m,q_h)&= L_1(q_h)(t_m), \quad \forall q_h\in V_h^k, \label{eq:full_discretea}\\
 -B(w_h,u_h^m)+(v_h^m,w_h)&=L_2(w_h)(t_m), \quad \forall w_h\in V_h^k. \label{eq:full_discreteb}
\end{align}
\end{subequations}
The starting values $(u_h^0,v_h^0), (u_h^1,v_h^1),\ldots,(u_h^n,v_h^n)$ are computed by the startup procedure
\begin{subequations}\label{eq:full_discrete_st}
\begin{align}
\frac{1}{\tau}\sum_{j=0}^nd_{mj}(u_h^j,q_h)+ B(v_h^m,q_h)+ J(u_h^m,q_h)&= L_1(q_h)(t_m), && \forall q_h\in V_h^k,\label{eq:full_discrete_sta}\\
-B(w_h,u_h^m)+(v_h^m,w_h)&=L_2(w_h)(t_m), && \forall w_h\in V_h^k,\label{eq:full_discrete_stb}\\
u_h^0&=\Pi_h u_0,&& \label{eq:full_discrete_stc}
\end{align}
\end{subequations}
for $m=1,2,\ldots,n$.

The solution procedure \eqref{eq:full_discrete} with the startup procedure \eqref{eq:full_discrete_st} is summarized in Algorithm~\ref{alg:solution_procedure}.

\begin{algorithm}[htbp]
\caption{Solution procedure for the fully discrete scheme}
\label{alg:solution_procedure}
\begin{algorithmic}[1]
\State Set $u_h^0$ from \eqref{eq:full_discrete_stc};
\State Determine $v_h^0$ from \eqref{eq:full_discrete_stb} at $m=0$;
\State Solve the startup linear system formed by \eqref{eq:full_discrete_sta}--\eqref{eq:full_discrete_stb} for $m=1,\ldots,n$
\State \hspace{0.5em} (a system of $nN_{\mathrm{dof}}$ equations in $nN_{\mathrm{dof}}$ unknowns, where $N_{\mathrm{dof}}=(k+1)M$)
\State \textbf{Obtain} $u_h^1,\ldots,u_h^n$ and $v_h^1,\ldots,v_h^n$;
\For{$m=n+1,n+2,\ldots,N$}
    \State Advance $(u_h^m,v_h^m)$ by solving \eqref{eq:full_discrete};
\EndFor
\end{algorithmic}
\end{algorithm}

\begin{remark}\label{rem:startup_alternative}
When $n$ is large, the startup linear system formed by \eqref{eq:full_discrete_st} becomes very large, since it contains $nN_{\mathrm{dof}}$ equations in $nN_{\mathrm{dof}}$ unknowns. In this case, the starting values $(u_h^m,v_h^m),m=0,1,\ldots,n$ may instead be computed by alternative time integrators, such as the three-stage, fifth-order Radau IIA Runge--Kutta method. In the numerical examples below, we compare the performance of several such startup strategies.
\end{remark}

%


\subsection{Stability analysis}

We now analyze the stability of the fully discrete problem \eqref{eq:full_discrete} with the startup procedure \eqref{eq:full_discrete_st}. Following the G-stability framework of \cite{lubich2013backward,xu2019backward}, we derive the estimate directly from \eqref{eq:full_discrete} by means of Lemma~\ref{lem:g_stable} and Lemma~\ref{lem:lbrfdm_g_stable}; this variational argument is equivalent to the matrix--vector proof in \cite{lubich2013backward,xu2019backward}. For simplicity, we assume homogeneous boundary data, so that 
$$L_2(w_h)=0,\quad L_1(q_h)(t_m)=\int_a^b f(x,t_m)q_h\,\mathrm{d}x.$$
 We write $f^m(x):=f(x,t_m)$ and denote by $\eta_n$ the parameter in Table~\ref{tab:LBRFDM_BDF_eta} associated with the chosen pair $(n,d)$. Throughout this section, we consider only the pairs $(n,d)$ listed in Table~\ref{tab:LBRFDM_BDF_eta} for which the corresponding parameter $\eta$ lies in $(0,1)$.
%

\begin{theorem}[Stability]\label{thm:full_stable}
For every pair $(n,d)$ in Table~\ref{tab:LBRFDM_BDF_eta} with $\eta_n\in(0,1)$, let $\lambda_0$ and $\lambda_1$ denote the minimum and maximum eigenvalues of the G-matrix in Lemma~\ref{lem:g_stable}. 
Suppose that there exists a positive constant $C_s$, depending only on $(n,d)$ and the startup procedure, such that the startup values satisfy
\begin{equation}\label{eq:startup_stability}
\tau\|v_h^n\|^2+\tau J(u_h^n,u_h^n)\leq C_s\left(\tau\sum_{j=1}^n\|f^j\|^2+\max_{0\leq j\leq n}\|u_h^j\|^2\right).
\end{equation}
Then, for $0<\tau\leq \tau_0:=2\lambda_0/(1+\eta_n^2)$,
there exists a positive constant
$C_{\mathrm{stab}}$
independent of $h$ and $\tau$, such that
\begin{equation}\label{eq:full_stable}
\sum_{i=m-n+1}^m\|u_h^i\|^2+\frac{\tau}{\lambda_0} \|v_h^m\|^2+\frac{\tau}{\lambda_0} J(u_h^n,u_h^n)\leq C_{\mathrm{stab}}\left(\tau\sum_{j=1}^m\|f^j\|^2+\max_{0\leq j\leq n}\|u_h^j\|^2\right), m\ge n+1.
\end{equation}

\end{theorem}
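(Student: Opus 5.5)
The proof follows the Nevanlinna--Odeh multiplier technique combined with Dahlquist's $G$-stability (Lemma~\ref{lem:g_stable} with $\mu(\zeta)=\zeta^n-\eta_n\zeta^{n-1}$, admissible by Lemma~\ref{lem:lbrfdm_g_stable}).

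\textbf{Step 1: test the scheme with the multiplier.} For $m\ge n+1$ we take $q_h=u_h^m-\eta_n u_h^{m-1}$ in \eqref{eq:full_discretea}. In \eqref{eq:full_discreteb} we take $w_h=v_h^m$, and also use \eqref{eq:full_discreteb} at level $m-1$ with $w_h=v_h^m$; the latter is valid as long as $m-1\ge n$, so that level $n$ from the startup is included. These give
\[
B(v_h^m,u_h^m)=\|v_h^m\|^2,\qquad B(v_h^m,u_h^{m-1})=(v_h^{m-1},v_h^m).
\]
Hence the spatial terms become
\[
\|v_h^m\|^2-\eta_n(v_h^{m-1},v_h^m)+J(u_h^m,u_h^m)-\eta_n J(u_h^m,u_h^{m-1}).
\]
Since $J$ is a symmetric positive semidefinite form, Young's inequality bounds this from below by
\[
\Bigl(1-\tfrac{\eta_n}{2}\Bigr)\bigl(\|v_h^m\|^2+J(u_h^m,u_h^m)\bigr)-\tfrac{\eta_n}{2}\bigl(\|v_h^{m-1}\|^2+J(u_h^{m-1},u_h^{m-1})\bigr).
\]
This telescopes with a positive surplus $(1-\eta_n)\bigl(\|v_h^m\|^2+J(u_h^m,u_h^m)\bigr)$, because $\eta_n<1$.

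\textbf{Step 2: the time-difference term.} We apply Lemma~\ref{lem:g_stable} to the time-difference term $\frac1\tau\bigl(\sum_j d_j u_h^{m-j},\,u_h^m-\eta_n u_h^{m-1}\bigr)$ with $\rho$ from Lemma~\ref{lem:lbrfdm_g_stable}. Writing $U^m=(u_h^{m-n+1},\dots,u_h^m)$ and $|U^m|_G^2=\sum g_{ij}(u^i,u^j)$, we obtain
\[
\tfrac1\tau\bigl(|U^m|_G^2-|U^{m-1}|_G^2\bigr)
\]
plus a nonnegative square, which we drop. The right-hand side $(f^m,u_h^m-\eta_n u_h^{m-1})$ is bounded by
\[
\tfrac12\|f^m\|^2+\tfrac12\|u_h^m\|^2+\tfrac{\eta_n^2}{2}\|u_h^{m-1}\|^2\le \tfrac12\|f^m\|^2+\tfrac{1+\eta_n^2}{2\lambda_0}\max\bigl(|U^m|_G^2,|U^{m-1}|_G^2\bigr),
\]
using $|U|_G^2\ge\lambda_0\sum\|u^i\|^2$. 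Multiplying by $\tau$ and summing from $n+1$ to $m$, we get
\[
|U^m|_G^2+\tau(1-\eta_n)\sum_{i}\bigl(\|v_h^i\|^2+J(u_h^i,u_h^i)\bigr)+\tfrac{\tau}{2}\bigl(\|v_h^m\|^2+J(u_h^m,u_h^m)\bigr)
\]
\[
\le |U^n|_G^2+\tfrac{\tau}{2}\bigl(\|v_h^n\|^2+J(u_h^n,u_h^n)\bigr)+\tfrac{\tau}{2}\sum_{j\le m}\|f^j\|^2+C\tau\sum_{i\le m}|U^i|_G^2 .
\]

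\textbf{Step 3: absorb the current level and close.} The current-level term $\tau\frac{1+\eta_n^2}{2\lambda_0}|U^m|_G^2$ is absorbed into the left side when $\tau\le\tau_0=2\lambda_0/(1+\eta_n^2)$. More precisely, one keeps a fraction, which is where the stated threshold enters; some care with constants may require a slightly smaller $\tau$ or a factor such as $(1-\tau/\tau_0)^{-1}$. The discrete Gr\"onwall lemma then handles the remaining sum. The initial terms are controlled by $|U^n|_G^2\le\lambda_1 n\max_{j\le n}\|u_h^j\|^2$ and by assumption \eqref{eq:startup_stability} for $\tau\|v_h^n\|^2+\tau J(u_h^n,u_h^n)$. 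Finally, $|U^m|_G^2\ge\lambda_0\sum_{i=m-n+1}^m\|u_h^i\|^2$, and dividing by $\lambda_0$ yields \eqref{eq:full_stable}, with $C_{\mathrm{stab}}$ depending on $\lambda_0,\lambda_1,\eta_n,C_s,T$.

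\textbf{Main obstacle.} The delicate step is the absorption of the current level in Step 3. Obtaining exactly the threshold $\tau_0$ with an $h$-independent constant needs care, because the Gr\"onwall factor $e^{CT}$ must stay bounded. A second point to check is the cross term $B(v_h^m,u_h^{m-1})$: it is legitimate only because \eqref{eq:full_discreteb} also holds at level $m-1=n$ through the startup equation \eqref{eq:full_discrete_stb}.
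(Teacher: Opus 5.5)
Your proposal is correct and is essentially the paper's own argument: the same multiplier $u_h^m-\eta_n u_h^{m-1}$, the same use of \eqref{eq:full_discreteb} at levels $m$ and $m-1$ (including level $n$ via the startup equation) to get $(v_h^m-\eta_n v_h^{m-1},v_h^m)$, then $G$-energy telescoping, summation, discrete Gr\"onwall, and the startup hypothesis \eqref{eq:startup_stability}. The differences are cosmetic: you use a Young split of the cross terms where the paper uses the identity $(a-\eta b,a)=\tfrac12\|a\|^2+\tfrac12\|a-\eta b\|^2-\tfrac{\eta^2}{2}\|b\|^2$, and there is one small slip, since Young on both pieces of $(f^m,u_h^m-\eta_n u_h^{m-1})$ gives $\|f^m\|^2$, not $\tfrac12\|f^m\|^2$. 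Your caveat on the threshold is justified and applies equally to the paper: its proof also needs strict $\tau<\tau_0$ for the absorption and its constant $C_e$ omits the factor $(1-\tau/\tau_0)^{-1}$, so a $\tau$-uniform $C_{\mathrm{stab}}$ really requires $\tau\le c\,\tau_0$ for a fixed $c<1$.
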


\begin{proof}
In \eqref{eq:full_discretea}, take $q_h=u_h^m-\eta_n u_h^{m-1}$. This yields
\begin{equation}\label{eq:stable1}
\frac{1}{\tau}\sum_{j=0}^nd_j (u_h^{m-j},u_h^m-\eta_n u_h^{m-1})+ B(v_h^m,u_h^m-\eta_n u_h^{m-1})+ J(u_h^m,u_h^m-\eta_n u_h^{m-1})= (f^m,u_h^m-\eta_n u_h^{m-1}).
\end{equation}
In \eqref{eq:full_discreteb}, take $w_h=v_h^m$ at the levels $m$ and $m-1$. A straightforward combination of the two resulting equations gives
\begin{equation}\label{eq:B_identity}
B(v_h^m,u_h^m-\eta_n u_h^{m-1})=(v_h^m-\eta_n v_h^{m-1},v_h^m).
\end{equation}
Substituting \eqref{eq:B_identity} into \eqref{eq:stable1}, we obtain
\begin{equation}\label{eq:stable2}
\sum_{j=0}^nd_j (u_h^{m-j},u_h^m-\eta_n u_h^{m-1})+\tau (v_h^m-\eta_n v_h^{m-1},v_h^m)+\tau J(u_h^m,u_h^m-\eta_n u_h^{m-1})=\tau (f^m,u_h^m-\eta_n u_h^{m-1}).
\end{equation}
By Lemma~\ref{lem:g_stable} and Lemma~\ref{lem:lbrfdm_g_stable}, there exists a symmetric positive definite matrix $G=(g_{ij})_{i,j=1}^n$ such that
\begin{align*}
\left(\sum_{j=0}^nd_j u_h^{m-j},u_h^m-\eta_n u_h^{m-1}\right)
=\sum_{i,j=1}^ng_{ij}(u_h^{m-i+1},u_h^{m-j+1})-\sum_{i,j=1}^ng_{ij}(u_h^{m-i},u_h^{m-j})\\
+\left\|\sum_{i=0}^nr_iu_h^{m-i}\right\|^2 
\geq \sum_{i,j=1}^ng_{ij}(u_h^{m-i+1},u_h^{m-j+1})-\sum_{i,j=1}^ng_{ij}(u_h^{m-i},u_h^{m-j}).
\end{align*}
Define the $G$-seminorm
\begin{equation}\label{eq:G_energy}
|U^m|_G^2:=\sum_{i,j=1}^ng_{ij}(u_h^{m-i+1},u_h^{m-j+1}).
\end{equation}
Denote by $\lambda_0$ and $\lambda_1$ the smallest and largest eigenvalues of $G$, respectively. Then
\begin{equation}\label{eq:G_positive}
\lambda_0\sum_{i=1}^n\|u_h^{m-i+1}\|^2\leq |U^m|_G^2\leq \lambda_1\sum_{i=1}^n\|u_h^{m-i+1}\|^2.
\end{equation}
Using the identity
\[
(v_h^m-\eta_n v_h^{m-1},v_h^m)=\frac12\|v_h^m\|^2+\frac12\|v_h^m-\eta_n v_h^{m-1}\|^2-\frac{\eta_n^2}{2}\|v_h^{m-1}\|^2
\geq \frac12\|v_h^m\|^2-\frac{\eta_n^2}{2}\|v_h^{m-1}\|^2,
\]
the analogous estimate for $J(\cdot,\cdot)$, and  Cauchy–Schwarz inequality
\[
(f^m,u_h^m-\eta_n u_h^{m-1})\leq \|f^m\|^2+\frac12\|u_h^m\|^2+\frac{\eta_n^2}{2}\|u_h^{m-1}\|^2,
\]
we deduce from \eqref{eq:stable2} that, for each $m\ge n+1$,
\begin{align}\label{eq:stable_energy}
|U^m|_G^2+\frac{\tau}{2}\|v_h^m\|^2+\frac{\tau}{2}J(u_h^m,u_h^m)
&\leq |U^{m-1}|_G^2+\frac{\tau}{2}\|v_h^{m-1}\|^2+\frac{\tau}{2}J(u_h^{m-1},u_h^{m-1})\notag\\
&\quad+\tau\left(\|f^m\|^2+\frac12\|u_h^m\|^2+\frac{\eta_n^2}{2}\|u_h^{m-1}\|^2\right).
\end{align}
Summing \eqref{eq:stable_energy} from $m=n+1$ to $m$, we obtain
\begin{align*}
|U^m|_G^2+\frac{\tau}{2}\|v_h^m\|^2+\frac{\tau}{2}J(u_h^m,u_h^m)
&\leq |U^n|_G^2+\frac{\tau}{2}\|v_h^n\|^2+\frac{\tau}{2}J(u_h^n,u_h^n)\\
&\quad+\tau\sum_{j=n+1}^m\left(\|f^j\|^2+\frac12\|u_h^j\|^2+\frac{\eta_n^2}{2}\|u_h^{j-1}\|^2\right).
\end{align*}
Since
\[
\sum_{j=n+1}^m\left(\|u_h^j\|^2+\eta_n^2\|u_h^{j-1}\|^2\right)
\leq (1+\eta_n^2)\sum_{j=n}^{m}\|u_h^j\|^2
\leq \frac{1+\eta_n^2}{\lambda_0}\sum_{\ell=0}^{\lfloor (m-n)/n\rfloor}|U^{m-\ell n}|_G^2,
\]
where $\lfloor\cdot\rfloor$ denotes the floor function, it follows that
\begin{align*}
|U^m|_G^2+\frac{\tau}{2}\|v_h^m\|^2+\frac{\tau}{2}J(u_h^m,u_h^m)
&\leq |U^n|_G^2+\frac{\tau}{2}\|v_h^n\|^2+\frac{\tau}{2}J(u_h^n,u_h^n)\\
&\quad+\tau\sum_{j=n+1}^m\|f^j\|^2
+\frac{\tau(1+\eta_n^2)}{2\lambda_0}\sum_{\ell=0}^{\lfloor (m-n)/n\rfloor}|U^{m-\ell n}|_G^2.
\end{align*}
If $\tau<\tau_0:=2\lambda_0/(1+\eta_n^2)$, 
applying the discrete Gr\"onwall inequality yields
\begin{equation}\label{eq:gronwall}
|U^m|_G^2+\frac{\tau}{2}\|v_h^m\|^2+\frac{\tau}{2}J(u_h^m,u_h^m)
\leq C_e\left(|U^n|_G^2+\frac{\tau}{2}\|v_h^n\|^2+\frac{\tau}{2}J(u_h^n,u_h^n)
+\tau\sum_{j=n+1}^m\|f^j\|^2\right),
\end{equation}
where
\begin{equation}\label{eq:Ce_def}
C_e=\exp\!\left(\frac{(1+\eta_n^2)t_m}{2\lambda_0}\right).
\end{equation}
Combining \eqref{eq:gronwall} with \eqref{eq:G_positive}, we obtain
\begin{align*}
\lambda_0\sum_{i=1}^n\|u_h^{m-i+1}\|^2 + \frac{\tau}{2}\|v_h^m\|^2 + \frac{\tau}{2}J(u_h^m, u_h^m)
&\leq C_e\Bigg( \lambda_1\sum_{i=1}^n\|u_h^{n-i+1}\|^2 + \frac{\tau}{2}\|v_h^n\|^2 \\
&\qquad + \frac{\tau}{2}J(u_h^n, u_h^n)
+ \tau\sum_{j=n+1}^m\|f^j\|^2 \Bigg).
\end{align*}
In particular, by \eqref{eq:startup_stability}, the right-hand side depends only on the startup values $u_h^0, \ldots, u_h^n$ and the source term. Consequently, we obtain \eqref{eq:full_stable}. This completes the proof.
\end{proof}

\begin{remark}\label{rem:startup_stability}
The assumption \eqref{eq:startup_stability} is natural in the G-energy framework: the discrete Gr\"onwall argument produces the startup terms $\tau\|v_h^n\|^2$ and $\tau J(u_h^n,u_h^n)$ on the right-hand side. For the startup procedure \eqref{eq:full_discrete_st}, these terms can be bounded in terms of $\max_{0\leq j\leq n}\|u_h^j\|^2$ and $\tau\sum_{j=1}^n\|f^j\|^2$ by solving the startup linear system, so that \eqref{eq:startup_stability} holds with a constant $C_s$ independent of $h$ and $\tau$.
\end{remark}

\subsection{Error analysis}

We now derive an error estimate for the fully discrete scheme \eqref{eq:full_discrete}--\eqref{eq:full_discrete_st}. Let $u(t)$ and $v(t)=-u_{xx}(t)$ denote the exact solution of \eqref{eq:original_problem}, and let $u_h(t)$ and $v_h(t)$ be the semi-discrete solution of \eqref{eq:semi_discrete}. Denote by $u_h^m$ and $v_h^m$ the fully discrete approximations at $t_m$.

We decompose the total error into the spatial and temporal parts
\begin{equation}\label{eq:error_split}
u(t_m)-u_h^m=u(t_m)-u_h(t_m)+u_h(t_m)-u_h^m:=\rho^m+\theta^m,\qquad m=0,1,\ldots,N.
\end{equation}
The spatial component $\rho^m$ is controlled by Theorem~\ref{thm:semi_error}. For the temporal component $\theta^m$, we first characterize the consistency error of the LBRFDM.

Let $p$ denote the temporal order of the LBRFDM \eqref{eq:FD_form} with the startup procedure \eqref{eq:full_discrete_st}, i.e.,
\begin{equation}\label{eq:temporal_order}
p=\left\{\begin{array}{ll}
d, & \text{if } n-d \text{ is even},\\
d+1, & \text{if } n-d \text{ is odd}.
\end{array}\right.
\end{equation}
For the semi-discrete solution $u_h(t)$, define the temporal defect
\begin{equation}\label{eq:temporal_defect}
\delta^m(x):=\frac{1}{\tau}\sum_{j=0}^n d_j\,u_h(x,t_{m-j})-u_{ht}(x,t_m),\qquad m=n+1,\ldots,N.
\end{equation}
For the startup steps $m=1,\ldots,n$, the startup temporal defect $\delta^m$ is defined analogously from the startup procedure \eqref{eq:full_discrete_st}.

\begin{lemma}[Temporal consistency]\label{lem:temporal_consistency}
 Let $u_h(\cdot,t)\in H^{k+1}(\mathcal{T}_h)$ and $u_{ht}(\cdot,t)\in L^2(a,b)$ for $t\in[0,T]$, and assume that $u_h(t)$ is sufficiently smooth so that 
$
\mathcal{M}_{h,p}:=\sup_{t\in[0,T]}\|\partial_t^{d+2}u_h(\cdot,t)\|$ is uniformly bounded in time.
Then the defect \eqref{eq:temporal_defect}, as well as the startup defect for $m=1,\ldots,n$, satisfies
\begin{equation}\label{eq:defect_bound}
\|\delta^m\|\leq C_\delta\,\tau^{p},\qquad m=1,\ldots,N,
\end{equation}
where $
C_\delta$
depends only on $\mathcal{M}_{h,p}$, the pair $(n,d)$ and the LBRFD coefficients $d_j$, but is independent of $h$, $\tau$, and $T$.
\end{lemma}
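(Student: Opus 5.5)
The plan is to reduce the estimate to the scalar LBRFD derivative error of Theorem~\ref{thm:derivative_error}, applied pointwise in $x$, and then pass to the $L^2(a,b)$ norm. Fix $m\ge n+1$ and the window $t_{m-n},\ldots,t_m$. Since $d_{m-n+i,m-n+j}=d_{ij}$, we have
\[
\frac1\tau\sum_{j=0}^n d_j u_h(x,t_{m-j})=r_n[u_h(x,\cdot)]'(t_m),
\]
where $r_n[u_h(x,\cdot)]$ is the Floater--Hormann interpolant \eqref{eq:bri_g1} on that window. Hence $\delta^m(x)=r_n[g_x]'(t_m)-g_x'(t_m)$ with $g_x(t):=u_h(x,t)$. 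The startup defect for $m=1,\ldots,n$ is the same quantity on the fixed window $t_0,\ldots,t_n$, evaluated at the node $t_m$ (row $m$ of $D^{(1)}$).

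Since the weights $w_j$ and the derivatives $l_j'(t_i)$ do not depend on $g$, the map $g\mapsto r_n[g]'(t_i)-g'(t_i)$ is a fixed linear functional of the restriction of $g$ to the window. On equispaced nodes it has the explicit form $\tau^{-1}\sum_j d_{ij}g(t_j)-g'(t_i)$. I would write a Peano-kernel representation
\[
r_n[g]'(t_i)-g'(t_i)=\int_{t_0}^{t_n}K_i(s)\,g^{(q+1)}(s)\,ds,
\]
with $q=p$. This is legitimate because the functional annihilates the polynomials it reproduces. The kernel is uniformly bounded by a constant depending only on $(n,d)$ times $\tau^{q}$; this follows by scaling $t=t_0+\tau s$, since the coefficients $d_{ij}$ are $\tau$-independent.

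Applying this to $g_x$ and using Minkowski's integral inequality gives
\[
\|\delta^m\|\le \int|K_i(s)|\,\|\partial_t^{q+1}u_h(\cdot,s)\|\,ds\le C(n,d)\,\tau^{q}\cdot n\tau\cdot\tau^{-1}\sup_t\|\partial_t^{q+1}u_h\|.
\]
This yields $C_\delta\tau^p$ with $C_\delta$ depending only on $\mathcal M_{h,p}$, $(n,d)$ and the $d_j$, and not on $h$, $\tau$ or $T$. The last point holds because only a window of length $n\tau$ enters, with no accumulation in time. Equivalently, one could invoke \eqref{eq:derivative_approx1} pointwise in $x$ and integrate in $x$, noting that $C_d$ there depends on sup-norms of $t$-derivatives. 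The Peano route is cleaner because it produces the $L^2$ norm directly.

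The main obstacle is the order $p=d+1$ when $n-d$ is odd. Theorem~\ref{thm:derivative_error} only gives $\tau^{d}$, and the gain in Theorem~\ref{thm:interpolation_error} concerns interpolation, not derivatives. For this case I would first check directly that the functional $\tau^{-1}\sum_j d_{nj}g(t_j)-g'(t_n)$ vanishes on all polynomials of degree $d+1$; for the listed pairs this is a finite check against Table~\ref{tab:LBRFDM_coef}. Exactness for degree $\le q$ is exactly what the Peano kernel needs, so the kernel argument then gives order $d+1$, using $\partial_t^{d+2}u_h$, which is what $\mathcal M_{h,p}$ controls. If exactness fails for some row, in particular a startup row $m<n$, I would fall back to the claimed global order: a local $\tau^d$ defect in the $n$ startup steps is compensated by the fact that the startup system is solved only once. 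However, that argument belongs to the error analysis rather than to this lemma, so the check on the coefficients is the step to settle first.
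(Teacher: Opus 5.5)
Your Peano-kernel argument is the paper's proof made precise. The paper imports order $p$ from the ODE literature and invokes ``a Taylor expansion and the order conditions''. Your kernel representation, with the $\tau$-scaling and Minkowski's inequality, gives the $L^2(a,b)$ bound directly and makes the independence of $T$ evident. You are also right that Theorem~\ref{thm:derivative_error} by itself only yields $\tau^{d}$.

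The proposal is incomplete exactly at the step you flag, and that step closes without any coefficient check. Since $\lambda_i(t)\bigl(g(t)-p_i(t)\bigr)=(-1)^i g[t_i,\ldots,t_{i+d},t]$, the Floater--Hormann error is
\[
g(t)-r_n[g](t)=\frac{\sum_{i=0}^{n-d}(-1)^i\,g[t_i,\ldots,t_{i+d},t]}{\sum_{i=0}^{n-d}\lambda_i(t)} .
\]
For $g\in\mathbb{P}_{d+1}$, each divided difference equals the leading coefficient of $g$. The alternating sum of the $n-d+1$ terms then vanishes precisely when $n-d$ is odd. So $r_n$ reproduces $\mathbb{P}_{d+1}$ on any node set. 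Because $d^{(1)}_{ij}=l_j'(t_i)$, every row of $D^{(1)}$ annihilates $\mathbb{P}_{d+1}$: the marching row $i=n$ and all startup rows $i=1,\ldots,n-1$. Your Peano argument with $q=p$ therefore covers all defects uniformly. The fallback to a global startup argument, which as you note would not prove this lemma anyway, is never needed.

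The finite check ``against Table~\ref{tab:LBRFDM_coef}'' would not have worked as described, for three reasons:
\begin{itemize}
\item The table gives only row $n$, so the startup rows cannot be checked from it.
\item It omits admissible pairs with odd $n-d$ from Table~\ref{tab:LBRFDM_BDF_eta}, such as $(11,2)$, $(8,3)$ and $(13,4)$.
\item Some diagonal entries are rounded. For $(9,4)$ the exact $d_{99}$ is $5689/2520$, not $149/66$, so even the row sum comes out nonzero.
\end{itemize}
Any such check has to use the exact formula \eqref{eq:dij_equispace}.

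Two smaller points. First, the kernel itself is $O(\tau^{q-1})$, not $O(\tau^{q})$; the window length $n\tau$ supplies the last power, so your final bound is right. Second, for even $n-d$ your estimate uses $\sup_t\|\partial_t^{d+1}u_h\|$. The quantity $\mathcal{M}_{h,p}=\sup_t\|\partial_t^{d+2}u_h\|$ does not control this without bringing in $T$ and the initial data. The paper's own proof also ends with $\|\partial_t^{p+1}u_h\|$, so the hypothesis should be read with $\partial_t^{p+1}$ in place of $\partial_t^{d+2}$.
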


\begin{proof}
Since the LBRFDM \eqref{eq:FD_form} together with the startup procedure \eqref{eq:FD_form_st} is of order $p$ for ODEs \cite{abdi2019adaptive} and Theorem \ref{thm:derivative_error}, applying them to the semi-discrete evolution yields
\[
\frac{1}{\tau}\sum_{j=0}^n d_j\,u_h(t_{m-j})=u_{ht}(t_m)+O(\tau^p)
\]
in the sense of $L^2(a,b)$, uniformly in $x\in(a,b)$. Equivalently, $\|\delta^m\|=O(\tau^p)\|\partial_t^{p+1}u_h(t_m)\|$. A Taylor expansion of $u_h$ about $t_m$ and the order conditions of the LBRFDM give the bound \eqref{eq:defect_bound}; see also \cite{lubich2013backward,xu2019backward}.
\end{proof}

Subtracting \eqref{eq:full_discrete} from the semi-discrete formulation evaluated at $t_m$ and tested with the LBRFDM difference operator, we obtain the equation satisfied by $\theta^m:=u_h(t_m)-u_h^m$ and $\hat{\theta}^m:=v_h(t_m)-v_h^m$:
\begin{subequations}\label{eq:error_full}
\begin{align}
\frac{1}{\tau}\sum_{j=0}^n d_j(\theta^{m-j},q_h)+ B(\hat{\theta}^m,q_h)+ J(\theta^m,q_h)&=(\delta^m,q_h), && \forall q_h\in V_h^k,\label{eq:error_full_a}\\
-B(w_h,\theta^m)+(\hat{\theta}^m,w_h)&=0, && \forall w_h\in V_h^k,\label{eq:error_full_b}
\end{align}\end{subequations}
for $m=n+1,\ldots,N$, with $\theta^0=\Pi_h u_0-u_h(0)=0$ and startup errors $\theta^1,\ldots,\theta^n$ of order $O(\tau^p)$ under the $p$th-order startup procedure.

\begin{lemma}[Estimate of the temporal error]\label{lem:temporal_error}
Assume homogeneous boundary data and let $\eta_n\in(0,1)$ be the parameter associated with the chosen pair $(n,d)$ in Table~\ref{tab:LBRFDM_BDF_eta}. Let $\lambda_0$ and $\lambda_1$ denote the minimum and maximum eigenvalues of the G-matrix in Lemma~\ref{lem:g_stable}. Suppose that the startup values satisfy
\begin{equation}\label{eq:startup_error}
\max_{0\leq m\leq n}\|\theta^m\|\leq C_{\mathrm{st}}\,\tau^p,
\end{equation}
where $C_{\mathrm{st}}$ depends on the startup procedure but is independent of $h$ and $\tau$. Suppose furthermore that there exists a positive constant $C_{\mathrm{st}}^*$, depending only on $(n,d)$ and the startup procedure, such that
\begin{equation}\label{eq:startup_theta_aux}
\tau\|\theta^{*n}\|^2+\tau J(\theta^n,\theta^n)\leq C_{\mathrm{st}}^*\left(\max_{0\leq j\leq n}\|\theta^j\|^2+\tau\sum_{j=1}^n\|\delta^j\|^2\right).
\end{equation}
Assume that $\tau<\tau_0=(1+\eta_n^2)/(2\lambda_0)$. Then there exists a positive constant $C_\theta$ 
independent of $h$ and $\tau$, such that
\begin{equation}\label{eq:theta_bound}
\sum_{m=n}^N\|\theta^m\|^2+\tau\sum_{m=n}^N\|\hat{\theta}^m\|^2\leq C_\theta\,\tau^{2p},
\end{equation}
and consequently,
\begin{equation}\label{eq:theta_uniform}
\max_{0\leq m\leq N}\|\theta^m\|\leq C_\theta^{\!1/2}\,\tau^p.
\end{equation}
\end{lemma}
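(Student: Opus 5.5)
The plan is to mirror the proof of Theorem~\ref{thm:full_stable}, applied to the error system \eqref{eq:error_full} with the source $f^m$ replaced by the defect $\delta^m$. In \eqref{eq:error_full_a} I take $q_h=\theta^m-\eta_n\theta^{m-1}$. In \eqref{eq:error_full_b} I take $w_h=\hat\theta^m$ at levels $m$ and $m-1$, and combine exactly as in \eqref{eq:B_identity} to get
\[
B(\hat\theta^m,\theta^m-\eta_n\theta^{m-1})=(\hat\theta^m-\eta_n\hat\theta^{m-1},\hat\theta^m).
\]
Multiplying by $\tau$ and applying Lemma~\ref{lem:g_stable} with Lemma~\ref{lem:lbrfdm_g_stable} gives the $G$-energy identity. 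I keep the nonnegative square $\|\sum_i r_i\theta^{m-i}\|^2$ on the left-hand side rather than discarding it.

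The next step is to estimate the remaining terms. I use $(\hat\theta^m-\eta_n\hat\theta^{m-1},\hat\theta^m)\ge\frac12\|\hat\theta^m\|^2-\frac{\eta_n^2}{2}\|\hat\theta^{m-1}\|^2$ and the analogous bound for $J$. For the source I use Young's inequality in the form $\tau(\delta^m,\theta^m-\eta_n\theta^{m-1})\le \tau\|\delta^m\|^2+\frac{\tau}{2}(\|\theta^m\|^2+\eta_n^2\|\theta^{m-1}\|^2)$. This yields the error analogue of \eqref{eq:stable_energy}:
\[
|\Theta^m|_G^2+\tfrac{\tau}{2}\|\hat\theta^m\|^2+\tfrac{\tau}{2}J(\theta^m,\theta^m)\le |\Theta^{m-1}|_G^2+\tfrac{\tau}{2}\|\hat\theta^{m-1}\|^2+\tfrac{\tau}{2}J(\theta^{m-1},\theta^{m-1})+\tau\|\delta^m\|^2+\tfrac{\tau}{2}(\|\theta^m\|^2+\eta_n^2\|\theta^{m-1}\|^2).
\]
I sum from $n+1$ to $m$. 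The $\|\theta\|^2$ terms are bounded by $(1+\eta_n^2)\lambda_0^{-1}$ times $G$-energies, using \eqref{eq:G_positive}. Under the step-size restriction of the lemma, the coefficient of the current-level term can be absorbed on the left, and the discrete Gr\"onwall lemma applies with constant $C_e$ as in \eqref{eq:Ce_def}.

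The start-up terms are controlled by the hypotheses. The bound \eqref{eq:startup_error} gives $|\Theta^n|_G^2\le\lambda_1 n C_{\rm st}^2\tau^{2p}$. The bound \eqref{eq:startup_theta_aux}, together with Lemma~\ref{lem:temporal_consistency}, gives $\tau\|\hat\theta^n\|^2+\tau J(\theta^n,\theta^n)\le C\tau^{2p}$. The source sum satisfies $\tau\sum_{j\le N}\|\delta^j\|^2\le T C_\delta^2\tau^{2p}$. Together these give the pointwise bound $\|\theta^m\|^2\le C\tau^{2p}$ for all $m$, which is \eqref{eq:theta_uniform}. Summing the dissipation terms $\frac{\tau}{2}\|\hat\theta^m\|^2$ over all steps, which the telescoped inequality supplies, gives the $\tau\sum\|\hat\theta^m\|^2$ part of \eqref{eq:theta_bound}.

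The main obstacle is the \emph{unweighted} sum $\sum_{m=n}^N\|\theta^m\|^2$ in \eqref{eq:theta_bound}. Summing the pointwise bound only gives $N\,C\tau^{2p}=CT\tau^{2p-1}$, which is one power of $\tau$ short. To recover this power, I would first try to extract genuine dissipation in $\theta$ itself. The idea is a discrete Poincaré-type inequality $\|\theta^m\|^2\le C_P(\|\hat\theta^m\|^2+J(\theta^m,\theta^m))$, obtained from \eqref{eq:error_full_b} by testing with $w_h=\theta^m$ and using the coercivity of $B$ plus the boundary penalty $J$. If this holds, the energy inequality gives a strict contraction $E^m\le(1+c\tau)^{-1}E^{m-1}+C\tau\|\delta^m\|^2$, and I would combine it with the retained square $\|\sum_i r_i\theta^{m-i}\|^2$ to bound $\sum_m E^m$ directly.

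I expect this step to be delicate. The inequality $\|\theta\|\lesssim\|\hat\theta\|$ must hold uniformly in $h$, and the step-size constraint and the constant $C_\theta$ have to be tracked carefully. If the argument only closes with the extra factor $T/\tau$, then I would regard the unweighted form as requiring a constant depending on $N$, and would report that the natural result is the $\tau$-weighted bound together with \eqref{eq:theta_uniform}.
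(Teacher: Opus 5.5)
Your route is the paper's: its proof is the one-line remark that the $G$-energy and discrete Gr\"onwall argument of Theorem~\ref{thm:full_stable} applies verbatim to \eqref{eq:error_full} with $f^m$ replaced by $\delta^m$, and your first two paragraphs carry that out. Your obstacle is real, and the paper's ``verbatim'' claim passes over it. What that argument actually delivers is the analogue of \eqref{eq:full_stable}. That is the \emph{window} bound $\sum_{i=m-n+1}^{m}\|\theta^i\|^2+\tau\|\hat\theta^m\|^2\le C\tau^{2p}$ for each $m\ge n+1$. It gives \eqref{eq:theta_uniform}, which is all Theorem~\ref{thm:full_error} uses, and after summation $\tau\sum_{m=n}^{N}\|\theta^m\|^2\le CT\tau^{2p}$. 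The unweighted bound in \eqref{eq:theta_bound}, with $C_\theta$ independent of $\tau$, is not just hard to reach; it is false in general. Already for $y'=-\lambda y+g$, the global error of an order-$p$ method has the form $\tau^p e(t_m)+o(\tau^p)$ with a generically nonzero error function $e$. Hence $\sum_m|\theta^m|^2\approx\tau^{2p-1}\int_0^T|e|^2\,\mathrm{d}t$.

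So your Poincar\'e/contraction repair cannot succeed. Its first step already fails in 1D: $B$ has no interior jump penalty, and $B(w,w)+J(w,w)=0$ for every piecewise constant $w$. Testing \eqref{eq:error_full_b} with $w_h=\theta^m$ therefore gives no bound $\|\theta^m\|\lesssim\|\hat\theta^m\|$. More fundamentally, even granting $E^m\le(1+c\tau)^{-1}E^{m-1}+C\tau\|\delta^m\|^2$, the persistent defect keeps $E^m$ of size $\tau^{2p}$ over $O(1/\tau)$ steps. Summing gives only $\sum_m E^m\lesssim(c\tau)^{-1}\bigl(E^n+C\tau\sum_m\|\delta^m\|^2\bigr)=O(\tau^{2p-1})$, and the retained square $\|\sum_i r_i\theta^{m-i}\|^2$ does not change this. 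Your fallback is the correct statement to prove: the $\tau$-weighted (or window) bound together with \eqref{eq:theta_uniform}.

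Two smaller corrections. First, the inequality you display, copied from \eqref{eq:stable_energy} with $\frac{\tau}{2}\|\hat\theta^{m-1}\|^2$ on the right, telescopes exactly and leaves no dissipation, so it does not supply $\tau\sum_m\|\hat\theta^m\|^2$. Keep instead $\frac{\tau\eta_n^2}{2}\|\hat\theta^{m-1}\|^2$ and $\frac{\tau\eta_n^2}{2}J(\theta^{m-1},\theta^{m-1})$. Summation then leaves $\frac{\tau(1-\eta_n^2)}{2}\sum_{j=n+1}^{m}\|\hat\theta^j\|^2$ on the left. This is where $\eta_n<1$ enters, and the paper's stability proof throws it away. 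Second, with your bookkeeping (coefficient $(1+\eta_n^2)/\lambda_0$ on the $G$-energies), absorbing the current-level term in the Gr\"onwall step needs $\tau(1+\eta_n^2)/(2\lambda_0)<1$. That is the restriction $\tau<2\lambda_0/(1+\eta_n^2)$ of Theorem~\ref{thm:full_stable}. The lemma's $\tau_0=(1+\eta_n^2)/(2\lambda_0)$ is its reciprocal and does not imply it (for small $\lambda_0$ it is far weaker), so invoke the former rather than ``the step-size restriction of the lemma''.
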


\begin{proof}
Since $\theta^n,\hat{\theta}^n\in V_h^k$ and the error system \eqref{eq:error_full} has the same structure as the fully discrete scheme \eqref{eq:full_discrete}--\eqref{eq:full_discrete_st}, with $u_h^m$, $v_h^m$, and $f^m$ replaced by $\theta^m$, $\hat{\theta}^m$, and $\delta^m$, respectively, the G-energy and discrete Gr\"onwall argument in the proof of Theorem~\ref{thm:full_stable} applies verbatim. The proof is omitted.
\end{proof}

Combining the spatial and temporal estimates, we arrive at the main result.

\begin{theorem}[Fully discrete error estimate]\label{thm:full_error}
Let $u$ be the exact solution of \eqref{eq:original_problem} and let $(u_h^m,v_h^m)$ be the fully discrete solution of \eqref{eq:full_discrete}--\eqref{eq:full_discrete_st}. Assume that boundary conditions are homogeneous, $k\ge 1$, and that $u$ is sufficiently smooth so that 
$\sup_{t\in[0,T]}\|u(t)\|_{k+3},
\sup_{t\in[0,T]}\|u_t(t)\|_{k+1},
\sup_{t\in[0,T]}\|\partial_t^{d+2}u_h(\cdot,t)\|$ are uniformly bounded in time
where $p$ is defined by \eqref{eq:temporal_order}.  Suppose that the pair $(n,d)$ is chosen from Table~\ref{tab:LBRFDM_BDF_eta} with $\eta_n\in(0,1)$ and that $\tau<\tau_0$. Then we have
\begin{equation}\label{eq:full_error_main}
\max_{0\leq m\leq N}\|u(t_m)-u_h^m\|
\leq C_{\mathrm{sp}}\,h^{k+1}+C_\theta^{\!1/2}\,\tau^p.
\end{equation}
 In particular,
\begin{equation}\label{eq:full_error_compact}
\max_{0\leq m\leq N}\|u(t_m)-u_h^m\|
\leq C_{\mathrm{fd}}\left(h^{k+1}+\tau^p\right),
\end{equation}
with $
C_{\mathrm{fd}}=\max\!\left\{C_{\mathrm{sp}},\; C_\theta^{\!1/2} \right\}
$
independent of $h$ and $\tau$. 
\end{theorem}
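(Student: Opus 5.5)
The estimate follows from the error splitting \eqref{eq:error_split}, $u(t_m)-u_h^m=\rho^m+\theta^m$, and the triangle inequality
\[
\|u(t_m)-u_h^m\|\leq \|\rho^m\|+\|\theta^m\|,\qquad 0\leq m\leq N .
\]
I will bound each term uniformly in $m$ and then take the maximum over $m$.

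\emph{Spatial part.} Theorem~\ref{thm:semi_error} applies under the stated regularity, namely $\sup_t\|u\|_{k+3}$ and $\sup_t\|u_t\|_{k+1}$ bounded. At each $t=t_m$ it gives $\|\rho^m\|=\|u(t_m)-u_h(t_m)\|\leq C_{\mathrm{sp}}h^{k+1}$. The constant $C_{\mathrm{sp}}$ grows at most polynomially in $t$, so taking it at $t=T$ makes it uniform over all $m\le N$. One consistency point needs checking at $m=0$. The scheme uses $u_h^0=\Pi_hu_0$, so the semi-discrete solution must be understood with the same initial datum $u_h(0)=\Pi_hu_0$. Then $\theta^0=0$, as the paper asserts, and $\rho^0=u_0-\Pi_hu_0$ is $O(h^{k+1})$ by Lemma~\ref{lem:projection_approx}. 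This matches the proof of Theorem~\ref{thm:semi_error}, where $\xi_u(0)=0$.

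\emph{Temporal part.} The regularity assumption on $\partial_t^{d+2}u_h$ gives Lemma~\ref{lem:temporal_consistency}, i.e. $\|\delta^m\|\le C_\delta\tau^p$ for $1\le m\le N$. The startup procedure \eqref{eq:full_discrete_st} is of order $p$, which gives \eqref{eq:startup_error}. The auxiliary bound \eqref{eq:startup_theta_aux} comes from the startup linear system, as in Remark~\ref{rem:startup_stability}. With $\tau<\tau_0$ and $\eta_n\in(0,1)$, Lemma~\ref{lem:temporal_error} then yields \eqref{eq:theta_uniform}, $\max_m\|\theta^m\|\le C_\theta^{1/2}\tau^p$.

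Combining the two bounds gives \eqref{eq:full_error_main}, and bounding both constants by $C_{\mathrm{fd}}$ gives \eqref{eq:full_error_compact}. The constants are independent of $h$ and $\tau$ because $C_{\mathrm{sp}}$ and $C_\theta$ are.

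The main obstacle is justifying that the temporal constants do not depend on $h$. Lemma~\ref{lem:temporal_consistency} requires $\sup_t\|\partial_t^{d+2}u_h\|$ to be bounded uniformly in $h$, and here it is simply assumed. To derive it instead, I would differentiate the semi-discrete system \eqref{eq:semi_discrete} in time. With time-independent (homogeneous) boundary data, $\partial_t^ju_h$ solves the same system with source $\partial_t^jf$. The $L^2$-stability argument of Theorem~\ref{thm:L2_stability} then bounds $\|\partial_t^ju_h(t)\|$ by $\|\partial_t^ju_h(0)\|$ plus source terms. The initial values $\partial_t^ju_h(0)$ would be compared with $\partial_t^ju(0)$ via the projection error argument. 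If that comparison turns out to be delicate, I would keep it as the stated hypothesis.

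A second, smaller issue is the time-step restriction: the constant $\tau_0$ appears as $2\lambda_0/(1+\eta_n^2)$ in Theorem~\ref{thm:full_stable} but as $(1+\eta_n^2)/(2\lambda_0)$ in Lemma~\ref{lem:temporal_error}. I would use the Gr\"onwall condition from the stability proof consistently throughout.
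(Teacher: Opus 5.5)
Your proposal is correct and follows the paper's proof exactly. Both split $u(t_m)-u_h^m=\rho^m+\theta^m$, bound $\|\rho^m\|$ by Theorem~\ref{thm:semi_error} and $\max_m\|\theta^m\|$ by Lemma~\ref{lem:temporal_error}, and add the two bounds. Your side remarks are also sound: the absorption step in the Gr\"onwall argument of Theorem~\ref{thm:full_stable} requires $\tau<2\lambda_0/(1+\eta_n^2)$, so the reciprocal $\tau_0$ in Lemma~\ref{lem:temporal_error} is a typo, and the $h$-uniform bound on $\sup_t\|\partial_t^{d+2}u_h\|$ is an assumption the paper does not prove.
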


\begin{proof}
By the decomposition \eqref{eq:error_split} and the triangle inequality,
\[
\|u(t_m)-u_h^m\|\leq \|\rho^m\|+\|\theta^m\|.
\]
Theorem~\ref{thm:semi_error} gives
\[
\max_{0\leq m\leq N}\|\rho^m\|
=\max_{0\leq m\leq N}\|u(t_m)-u_h(t_m)\|
\leq C_{\mathrm{sp}}\,h^{k+1}.
\]
 Lemma~\ref{lem:temporal_error} yields
\[
\max_{0\leq m\leq N}\|\theta^m\|
\leq C_\theta^{\!1/2}\,\tau^p.
\]
 Adding the two bounds proves \eqref{eq:full_error_main}, and \eqref{eq:full_error_compact} follow immediately.
 
 %

\end{proof}

\begin{remark}\label{rem:error_startup}
Under the regularity assumptions of Theorem~\ref{thm:full_error}, the semi-discrete solution $u_h$ inherits sufficient temporal smoothness from the exact solution $u$, so that the bound \eqref{eq:full_error_main} holds and the defect estimate in Lemma~\ref{lem:temporal_consistency} and the startup estimates \eqref{eq:startup_error} and \eqref{eq:startup_theta_aux} are valid. If a high-order alternative startup integrator is used as in Remark~\ref{rem:startup_alternative}, the same error estimate remains valid provided the startup procedure is of order $p$.
\end{remark}


\section{Two-dimensional case}

In this section, we develop a fully discrete scheme for the two-dimensional 
linear fourth-order problem \eqref{eq:original_problem_2D} by coupling a 
barycentric rational  finite difference (BRFD) method in time 
with an interior penalty discontinuous Galerkin (IPDG) method in space. 
Since the subsequent stability and error analysis follows the same lines as 
in Section~\ref{sec:onedimension}, we state only the main results for the two-dimensional setting.

We consider the model problem
\begin{equation}\label{eq:original_problem_2D}
u_t+\Delta^2 u=f(\mathbf{x},t),\quad (\mathbf{x},t)\in\Omega\times (0,T], \quad u(\mathbf{x},0)=u_0(\mathbf{x}),\quad \mathbf{x}\in\Omega,
\end{equation}
subject to the Dirichlet boundary conditions
\begin{equation}\label{eq:Dirichlet_2D}
u=g_0,\quad \frac{\p u}{\p\mathbf{n}}=g_1,\quad \text{on}\ \ \p\Omega.
\end{equation}

\subsection{Spatial discretization}

Let $\Omega\subset\mathbb{R}^2$ be a bounded polygonal domain, partitioned 
into a shape-regular triangulation $\mathcal{T}_h=\{E\}$. For each 
$E\in\mathcal{T}_h$, denote by $h_E$ its diameter and set 
$h:=\max_{E\in\mathcal{T}_h} h_E$. The edges of the triangulation are 
partitioned into the interior edges $\mathcal{E}_h^{\mathrm i}$ and the 
boundary edges $\mathcal{E}_h^{\mathrm b}$, and we write 
$\mathcal{E}_h:=\mathcal{E}_h^{\mathrm i}\cup\mathcal{E}_h^{\mathrm b}$. 
For each edge $e\in\mathcal{E}_h$, let $h_e$ denote its length. On 
shape-regular meshes, there exists a constant $C_H>0$, independent of $h$ 
and $h_e$, such that
\begin{equation}\label{eq:shape_regular}
C_H^{-1}h_e \le h \le C_H h_e,\qquad \forall e\in\mathcal{E}_h.
\end{equation}

We define the broken Sobolev space
\[
H^s(\mathcal{T}_h):=\{\, v\in L^2(\Omega): v|_E\in H^s(E),\ \forall E\in\mathcal{T}_h \,\},
\]
and, for any bounded subdomains $O_1,O_2\subset\Omega$,
\[
H^s(O_1\cup O_2):=\{\, v\in L^2(\Omega): v|_{O_1}\in H^s(O_1),\ v|_{O_2}\in H^s(O_2) \,\}.
\]
The discontinuous finite element space is
\[
V_h^k:=\{\, v\in L^2(\Omega): v|_E\in P_k(E),\ \forall E\in\mathcal{T}_h \,\},
\]
where $P_k(E)$ denotes the space of polynomials of degree at most $k$ on $E$.

 Let $E\in\mathcal{T}_h$ and let $\mathbf{n}_E$ be the outward unit normal to 
$\partial E$. Suppose that $E'\in\mathcal{T}_h$ shares a common edge 
$e=E\cap E'\in\mathcal{E}_h^{\mathrm i}$ with $E$. For 
$w\in H^1(E\cup E')$, the average and jump of $w$ on $e$ are defined by
\begin{equation}\label{eq:average_jump}
\llkh w\rrkh_e:=\frac12\bigl(w|_{E}+w|_{E'}\bigr),\qquad
\llbracket w\rrbracket_e:=w|_{E'}-w|_{E}.
\end{equation}
On a boundary edge $e\in\mathcal{E}_h^{\mathrm b}$, with $E$ the adjacent element, 
we set
\[
\llkh w\rrkh_e:=w|_{E},\qquad \llbracket w\rrbracket_e:=-w|_{E}.
\]
For $w\in H^2(\mathcal{T}_h)$, we define the discrete energy norm
\begin{equation}\label{eq:energy_norm_2D}
\interleave w\interleave_h^2:= \sum_{E\in\mathcal{T}_h}\int_E |\nabla w|^2\,\mathrm{d}\mathbf{x}
+\sum_{e\in\mathcal{E}_h}\int_e\frac{\llbracket v\rrbracket_e^2}{h_e}\,\mathrm{d}s.
\end{equation}
One verifies that $\interleave\cdot\interleave_h$ defines a norm on 
$H^2(\mathcal{T}_h)$; see \cite{suli2007hp}.

 By introducing the auxiliary variable $v=-\Delta u$, problem 
\eqref{eq:original_problem_2D} can be rewritten as
\begin{equation}\label{eq:problem_2D_sys}
u_t-\Delta v=f,\qquad v+\Delta u=0,\qquad \mathbf{x}\in\Omega.
\end{equation}

A mixed discontinuous Galerkin method for the biharmonic operator in two 
dimensions was proposed in \cite{xiong2017priori}. In this section, we 
employ a mixed IPDG discretization following \cite{xiong2017priori}. 
The semi-discrete variational formulation reads: find $u_h,v_h\in V_h^k$ 
such that, for each time $t>0$,
\begin{subequations}\label{eq:semi_discrete_2D}
\begin{align}
  (u_{ht},q_h)+B_2(v_h,q_h)+J_2(u_h,q_h)&=L_3(q_h), 
  && \forall q_h\in V_h^k,\label{eq:semi_discretea_2D}\\
 -B_2(w_h,u_h)+(v_h,w_h)&=L_4(w_h), 
  && \forall w_h\in V_h^k, \label{eq:semi_discreteb_2D}
\end{align}
\end{subequations}
where
\begin{align*}
B_2(w,q)&:=\sum_{E\in\mathcal{T}_h}\int_E \nabla w\cdot\nabla q\,\mathrm{d}\mathbf{x}
+\sum_{e\in\mathcal{E}_h}\int_e \llkh \frac{\partial w}{\partial\mathbf{n}}\rrkh_e
\llbracket q\rrbracket_e\,\mathrm{d}s
+\sum_{e\in\mathcal{E}_h^{\mathrm i}}\int_e \llkh \frac{\partial q}{\partial\mathbf{n}}\rrkh_e
\llbracket w\rrbracket_e\,\mathrm{d}s
+J_I(w,q),\\
J_2(w,q)&:=\sum_{e\in\mathcal{E}_h^d}\int_e\frac{\beta_0}{h_e}\llbracket w\rrbracket_e\llbracket q\rrbracket_e\mathrm{d}s,\quad
J_I(w,q):=\sum_{e\in\mathcal{E}_h^{\mathrm i}}\int_e\frac{\beta_0}{h_e}\,
\llbracket w\rrbracket_e\llbracket q\rrbracket_e\,\mathrm{d}s,\\
L_3(q_h)&:=\int_{\Omega} f(\mathbf{x},t)\,q_h\,\mathrm{d}\mathbf{x}
+\sum_{e\in\mathcal{E}_h^{\mathrm b}}\int_e\frac{\beta_0}{h_e}\,g_0\,q_h\,\mathrm{d}s,\quad
L_4(w_h):=\sum_{e\in\mathcal{E}_h^{\mathrm b}}\int_e
\Bigl(g_1\,w_h-\frac{\partial w_h}{\partial\mathbf{n}}\,g_0\Bigr)\,\mathrm{d}s.
\end{align*}
Here $\beta_0>0$ is a penalty parameter, and $\partial w/\partial\mathbf{n}:=\nabla w\cdot\mathbf{n}_E$ 
on each edge $e\subset\partial E$.

\begin{theorem}[$L^2$-stability]\label{thm:L2_stability_2D}
For $\beta_0\ge 0$, the semi-discrete solution $(u_h,v_h)$ of 
\eqref{eq:semi_discrete_2D} satisfies
\begin{equation}\label{eq:L2_stable_2D}
\|u_h(t)-\tilde{u}_h(t)\|\le \|u_0-\tilde{u}_0\|,\qquad t\in[0,T],
\end{equation}
where $(\tilde{u}_h,\tilde{v}_h)$ denotes the solution corresponding to the 
same boundary data and source term, but with initial data $\tilde{u}_0$ in 
place of $u_0$.
\end{theorem}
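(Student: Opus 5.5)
I will mirror the proof of Theorem~\ref{thm:L2_stability}. Let $(u_h,v_h)$ and $(\tilde u_h,\tilde v_h)$ solve \eqref{eq:semi_discrete_2D} with the same $f$, $g_0$, $g_1$, and set $e_h:=u_h-\tilde u_h$ and $e_h^*:=v_h-\tilde v_h$. The functionals $L_3$ and $L_4$ depend only on the source and boundary data, so they cancel on subtraction. This leaves the homogeneous error system
\begin{align*}
(e_{ht},q_h)+B_2(e_h^*,q_h)+J_2(e_h,q_h)&=0,\qquad \forall q_h\in V_h^k,\\
-B_2(w_h,e_h)+(e_h^*,w_h)&=0,\qquad \forall w_h\in V_h^k .
\end{align*}
Both $e_h$ and $e_h^*$ lie in $V_h^k$ for every $t$, so they are admissible test functions.

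The key step is the choice $q_h=e_h$ in the first equation and $w_h=e_h^*$ in the second, followed by adding the two. The terms $B_2(e_h^*,e_h)$ and $-B_2(e_h^*,e_h)$ cancel exactly, because both are the form $B_2$ evaluated in the same argument order (first slot $e_h^*$, second slot $e_h$). Symmetry of $B_2$ is therefore not needed. This matters because $B_2$ is not symmetric: the consistency term $\llkh\partial_{\mathbf n}w\rrkh\llbracket q\rrbracket$ runs over all edges, while the symmetrizing term runs only over interior edges. The cancellation uses nothing but the bilinearity of $B_2$ and the fact that the same pair enters both equations. The sum is
\[
\frac12\frac{\mathrm d}{\mathrm dt}\|e_h\|^2+\|e_h^*\|^2+J_2(e_h,e_h)=0 .
\]

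It remains to check that $J_2(e_h,e_h)=\sum_{e\in\mathcal E_h^d}\int_e \frac{\beta_0}{h_e}\llbracket e_h\rrbracket_e^2\,\mathrm ds\ge 0$, which holds because $\beta_0\ge0$. Hence $\frac{\mathrm d}{\mathrm dt}\|e_h\|^2\le0$. Integrating over $(0,t)$ gives $\|e_h(t)\|\le\|e_h(0)\|$, and with the initial data understood as in Theorem~\ref{thm:L2_stability} (the discrete initial values corresponding to $u_0$ and $\tilde u_0$) this is \eqref{eq:L2_stable_2D}.

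There is no genuine obstacle here. The only point needing care is the cancellation of the $B_2$ terms, which must be read in the correct argument order since $B_2$ is nonsymmetric. The $J_I$ term inside $B_2$ is likewise harmless, because it cancels together with the rest of $B_2$.
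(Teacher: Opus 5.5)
Your proof is correct and is exactly the argument the paper intends: the paper omits the 2D proof and refers back to the 1D one, which tests with $q_h=e_h$ and $w_h=e_h^*$, adds the equations so that the two $B_2(e_h^*,e_h)$ terms cancel, and uses $J_2(e_h,e_h)\ge 0$. Your remark that this cancellation needs only the common argument order, not symmetry of $B_2$, is right. Your reading of the initial data as the discrete initial values also matches the paper's implicit reading; with $u_h(0)=\Pi_h u_0$ taken literally, the argument would only give $\|\Pi_h(u_0-\tilde u_0)\|$ on the right-hand side.
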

\begin{proof}
The proof follows the same argument as that of Theorem~\ref{thm:L2_stability} 
and is therefore omitted.
\end{proof}

Following \cite{xiong2017priori}, we introduce an elliptic projection 
$\Pi_h:H^2(\mathcal{T}_h)\to V_h^k$ as follows: given $w\in H^2(\mathcal{T}_h)$, 
find $\Pi_h w\in V_h^k$ such that
\begin{equation}\label{eq:projection_2D}
\left\{\begin{aligned}
B_3(w-\Pi_h w,q_h)&=0, && \forall q_h\in V_h^k,\\
(w-\Pi_h w,1)&=0,
\end{aligned}\right.
\end{equation}
where
\[
B_3(w,q):=B_I(w,q)+J_I(w,q),
\]
and
\begin{align*}
B_I(w,q):=\sum_{E\in\mathcal{T}_h}\int_E \nabla w\cdot\nabla q\,\mathrm{d}\mathbf{x}
+\sum_{e\in\mathcal{E}_h^{\mathrm i}}\int_e \llkh \frac{\partial w}{\partial\mathbf{n}}\rrkh_e
\llbracket q\rrbracket_e\,\mathrm{d}s+\sum_{e\in\mathcal{E}_h^{\mathrm i}}\int_e \llkh \frac{\partial q}{\partial\mathbf{n}}\rrkh_e
\llbracket w\rrbracket_e\,\mathrm{d}s.
\end{align*}


According to Lemma~5 in \cite{xiong2017priori}, we have the following 
approximation properties of the projection $\Pi_h$.

\begin{lemma}[Projection error]\label{lem:projection_error_2D}
Assume that the penalty parameter $\beta_0>0$ is sufficiently large. 
Given $w\in H^{k+1}(\mathcal{T}_h)$, there exists a unique function 
$\Pi_h w\in V_h^k$ satisfying \eqref{eq:projection_2D}. Moreover, 
there exists a positive constant $C_{pd}$, independent of $h$, 
such that
\begin{equation}\label{eq:projection_error_H1}
\sum_{E\in\mathcal{T}_h}|w-\Pi_h|_{1,E}^2+
+\sum_{e\in\mathcal{E}_h^i}\int_e\frac{1}{h_e}\llbracket w-\Pi_h\rrbracket_e^2\mathrm{d}s
\le C_{pd}^2\sum_{E\in\mathcal{T}_h} h_E^{2k}\|w\|_{k+1,E}^2,
\end{equation}
and
\begin{equation}\label{eq:projection_error_L2}
\|w-\Pi_h w\|
\le C_{pd}\sum_{E\in\mathcal{T}_h} h_E^{k+1}\|w\|_{k+1,E}.
\end{equation}
\end{lemma}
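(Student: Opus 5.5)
The plan is to follow the standard argument for symmetric interior penalty elliptic projections: establish coercivity and continuity of $B_3$ on $V_h^k$ (modulo constants), and deduce existence and uniqueness. Then use Céa-type quasi-optimality to get the energy estimate \eqref{eq:projection_error_H1}, and finally an Aubin--Nitsche duality argument for the $L^2$ estimate \eqref{eq:projection_error_L2}.

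\textbf{Coercivity and well-posedness.} Define the interior energy seminorm
\[
\|w\|_{I}^2:=\sum_{E}|w|_{1,E}^2+\sum_{e\in\mathcal{E}_h^i}\int_e h_e^{-1}\llbracket w\rrbracket_e^2\,\mathrm{d}s .
\]
For $q_h\in V_h^k$, write $B_3(q_h,q_h)=\sum_E|q_h|_{1,E}^2+2\sum_{e\in\mathcal{E}_h^i}\int_e\llkh\partial_{\mathbf n}q_h\rrkh\llbracket q_h\rrbracket+J_I(q_h,q_h)$. Bound the cross term by Cauchy--Schwarz, the discrete trace inverse inequality $\|\nabla q_h\|_{e}\le C_{tr}h_e^{-1/2}\|\nabla q_h\|_E$, shape regularity \eqref{eq:shape_regular}, and Young's inequality. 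For $\beta_0$ large this gives $B_3(q_h,q_h)\ge c\|q_h\|_I^2$.

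Since $\|\cdot\|_I$ vanishes only on global constants, the constraint $(q_h,1)=0$ makes it a norm on the mean-zero subspace. Hence \eqref{eq:projection_2D} is a square linear system: constants lie in the kernel of $B_3$, and the extra mean condition fixes the constant. Uniqueness then gives existence. The same trace arguments give continuity $|B_3(w,q_h)|\le C\|w\|_{I,*}\|q_h\|_I$, where $\|w\|_{I,*}^2:=\|w\|_I^2+\sum_{e\in\mathcal{E}_h^i}h_e\|\llkh\partial_{\mathbf n}w\rrkh\|_e^2$ for nonpolynomial $w$.

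\textbf{Energy estimate.} Take $I_hw$, the elementwise $L^2$ projection onto $P_k$ shifted so that $(w-I_hw,1)=0$. Split $w-\Pi_hw=(w-I_hw)+(I_hw-\Pi_hw)$. Galerkin orthogonality gives
\[
c\|I_hw-\Pi_hw\|_I^2\le B_3(I_hw-w,\,I_hw-\Pi_hw)\le C\|w-I_hw\|_{I,*}\|I_hw-\Pi_hw\|_I .
\]
Then bound $\|w-I_hw\|_{I,*}^2\le C\sum_E h_E^{2k}\|w\|_{k+1,E}^2$ elementwise, using the continuous trace inequality $\|\phi\|_{e}^2\le C(h_E^{-1}\|\phi\|_E^2+h_E|\phi|_{1,E}^2)$ together with standard polynomial approximation. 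This yields \eqref{eq:projection_error_H1}.

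\textbf{$L^2$ estimate.} Set $\varepsilon=w-\Pi_hw$ and let $\psi$ solve the Neumann problem $-\Delta\psi=\varepsilon$ in $\Omega$, $\partial_{\mathbf n}\psi=0$ on $\partial\Omega$. This is solvable because $(\varepsilon,1)=0$. Fix $\psi$ by $(\psi,1)=0$ and assume elliptic regularity $\|\psi\|_2\le C\|\varepsilon\|$, which holds on convex polygons. Since $\psi\in H^2$ has zero jumps and vanishing normal derivative on $\partial\Omega$, elementwise integration by parts yields the adjoint consistency identity $(\varepsilon,\varepsilon)=B_3(\varepsilon,\psi)$. Here the symmetric interior-edge term accounts for $\llbracket\varepsilon\rrbracket$, the penalty term vanishes, and the boundary edges do not appear thanks to the Neumann condition. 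By orthogonality, $\|\varepsilon\|^2=B_3(\varepsilon,\psi-I_h^1\psi)\le C\|\varepsilon\|_{I,*}\,h|\psi|_2\le Ch\|\varepsilon\|_{I,*}\|\varepsilon\|$, where $I_h^1\psi$ is a continuous piecewise linear interpolant. Finally, bound $\|\varepsilon\|_{I,*}$ by $\|\varepsilon\|_I$ plus approximation terms, using the splitting above and inverse trace for the discrete part. This gives \eqref{eq:projection_error_L2}; the sum $\sum_E h_E^{k+1}\|w\|_{k+1,E}$ dominates the $\ell^2$ form.

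\textbf{Main obstacle.} The delicate step is the duality argument. It needs the correct adjoint problem, and since $B_3$ omits boundary edges this forces Neumann data. It also needs $H^2$ regularity of $\psi$, which requires a convexity assumption on $\Omega$ that is implicit in Lemma~5 of \cite{xiong2017priori}. I would cite that lemma for this regularity rather than reprove it.
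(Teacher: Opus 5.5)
The paper gives no proof of this lemma; it imports it directly from Lemma~5 of \cite{xiong2017priori}. Your proposal is a self-contained version of the standard argument behind such results, and it is correct. It has four ingredients:
\begin{itemize}
\item coercivity of $B_3$ on $V_h^k$ in the interior seminorm $\|\cdot\|_I$ for $\beta_0$ large;
\item the fact that $B_3(w,1)=0$ for every $w\in H^2(\mathcal{T}_h)$ (since $\llbracket 1\rrbracket_e=0$ on interior edges), so the test $q_h=1$ is void and the mean condition makes \eqref{eq:projection_2D} a square system with trivial kernel;
\item C\'ea's lemma in $\|\cdot\|_I$;
\item an Aubin--Nitsche argument whose dual problem must be of Neumann type, because $B_3$ has no boundary-edge terms, and which is solvable precisely because $(w-\Pi_h w,1)=0$.
\end{itemize}

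What your route buys over the citation is that it exposes hypotheses the statement leaves implicit. The first is $H^2$-regularity of the Neumann problem (e.g.\ $\Omega$ convex). Without it, the duality argument only yields $h^{k+s}$ with $s<1$ in \eqref{eq:projection_error_L2}. The second is quasi-uniformity, which you need to turn the global factor $h$ produced by $\psi-I_h^1\psi$ into the elementwise weights $h_E^{k+1}$. You did not state this, but \eqref{eq:shape_regular} supplies it, since $h\le C_H h_e\le C_H h_E$.

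Two small simplifications are available. First, the elementwise $L^2$ projection already satisfies $(w-I_hw,1)=0$, so no shift is needed. Second, because $I_h^1\psi$ is continuous, $\llbracket \psi-I_h^1\psi\rrbracket_e=0$. Hence the term containing $\llkh \partial_{\mathbf n}\varepsilon\rrkh_e$ and the penalty term both drop out of $B_3(\varepsilon,\psi-I_h^1\psi)$. You then get $\|\varepsilon\|^2\le Ch\|\varepsilon\|_I|\psi|_2$ directly, and the last step of bounding $\|\varepsilon\|_{I,*}$ is unnecessary.

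Finally, your argument only uses elementwise traces of $w-I_hw$, so it covers the broken hypothesis $w\in H^{k+1}(\mathcal{T}_h)$ as stated.
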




\begin{theorem}[Error estimate]\label{thm:semi_error_2D}
Let $u$ be the exact solution of \eqref{eq:original_problem_2D} with $v=-\Delta u$, 
and assume that $u$ is sufficiently smooth so that $
\|u(t)\|_{k+3},
\|u_t(t)\|_{k+1}$ are uniformly bounded in time.
Let $(u_h,v_h)$ denote the semi-discrete solution of \eqref{eq:semi_discrete_2D}. 
Then, for $k\ge 1$, there exists a positive constant 
$C_{\mathrm{sp}}$
independent of $h$ and $\tau$, such that
\begin{equation}\label{eq:semi_error_main_2D}
\|u_h(t)-u(t)\|
\le C_{\mathrm{sp}}h^{k-1},
\end{equation}
for all $t\in[0,T]$.
\end{theorem}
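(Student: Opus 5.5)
We mimic the energy argument of Theorem~\ref{thm:semi_error}, with the elliptic projection $\Pi_h$ of \eqref{eq:projection_2D} in place of the one-dimensional projection.

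\textbf{Step 1 (error equations).} Set $e_u=u-u_h$ and $e_v=v-v_h$. Since the exact solution is consistent with \eqref{eq:semi_discrete_2D}, for all $q_h,w_h\in V_h^k$ we get
\[
(e_{ut},q_h)+B_2(e_v,q_h)+J_2(e_u,q_h)=0,\qquad -B_2(w_h,e_u)+(e_v,w_h)=0.
\]
Split $e_u=\eta_u-\xi_u$ and $e_v=\eta_v-\xi_v$ with $\eta_u=u-\Pi_h u$ and $\eta_v=v-\Pi_h v$. Test with $q_h=\xi_u$ and $w_h=\xi_v$ and add, exactly as in \eqref{eq:err1}. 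This gives
\[
\tfrac12\tfrac{d}{dt}\|\xi_u\|^2+\|\xi_v\|^2+J_2(\xi_u,\xi_u)
=(\eta_{ut},\xi_u)+(\eta_v,\xi_v)+J_2(\eta_u,\xi_u)+B_2(\eta_v,\xi_u)-B_2(\xi_v,\eta_u).
\]

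\textbf{Step 2 (use the projection).} Write $B_2=B_3+(\text{boundary terms})$, where the boundary part is $\sum_{e\in\mathcal{E}_h^b}\int_e \frac{\partial w}{\partial\mathbf n}\llbracket q\rrbracket_e\,ds$. By \eqref{eq:projection_2D}, $B_3(\eta_v,\xi_u)=0$. For $B_2(\xi_v,\eta_u)$ we cannot use orthogonality, because $B_3$ is symmetric but $\eta_u$ sits in the second slot. Symmetry of $B_3$ still gives $B_3(\xi_v,\eta_u)=B_3(\eta_u,\xi_v)=0$.

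What survives are boundary integrals:
\[
\sum_{e\in\mathcal E_h^b}\int_e\frac{\partial\eta_v}{\partial\mathbf n}\llbracket\xi_u\rrbracket_e\,ds,\qquad
\sum_{e\in\mathcal E_h^b}\int_e\frac{\partial\xi_v}{\partial\mathbf n}\llbracket\eta_u\rrbracket_e\,ds,
\]
together with $J_2(\eta_u,\xi_u)$. These are the asymmetric boundary contributions.

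\textbf{Step 3 (bound the boundary terms).}
\begin{itemize}
\item The term with $\partial_{\mathbf n}\eta_v$ is absorbed by Young's inequality into $J_2(\xi_u,\xi_u)$, leaving $C h\|\partial_{\mathbf n}\eta_v\|_{\partial\Omega}^2$.
\item The term with $\partial_{\mathbf n}\xi_v$ has no boundary penalty on $\xi_v$ to absorb it. We use the inverse trace inequality $\|\partial_{\mathbf n}\xi_v\|_e\le Ch^{-3/2}\|\xi_v\|_{E}$ and bound it by $\tfrac14\|\xi_v\|^2+Ch^{-3}\|\eta_u\|_{\partial\Omega}^2$.
\item $J_2(\eta_u,\xi_u)$ is likewise bounded by $\tfrac12 J_2(\xi_u,\xi_u)+Ch^{-1}\|\eta_u\|_{\partial\Omega}^2$.
\end{itemize}
For the traces, combine the multiplicative trace inequality with Lemma~\ref{lem:projection_error_2D}. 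This gives $\|\eta_u\|_{\partial\Omega}\le Ch^{k+1/2}$ and $\|\nabla\eta_v\|_{\partial\Omega}\le Ch^{k-1/2}$, using $v=-\Delta u\in H^{k+1}$ because $u\in H^{k+3}$. The worst contribution is then $h^{-3}h^{2k+1}=h^{2k-2}$, which yields the rate $h^{k-1}$. The volume terms $(\eta_{ut},\xi_u)$ and $(\eta_v,\xi_v)$ are $O(h^{k+1})$ by \eqref{eq:projection_error_L2}.

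\textbf{Step 4 (close and conclude).} We arrive at
\[
\tfrac{d}{dt}\|\xi_u\|^2+\|\xi_v\|^2\le C\|\xi_u\|^2+Ch^{2k-2}.
\]
We choose $u_h(0)=\Pi_h u_0$, so $\xi_u(0)=0$. Gr\"onwall then gives $\|\xi_u(t)\|\le Ch^{k-1}$. The triangle inequality with \eqref{eq:projection_error_L2} gives \eqref{eq:semi_error_main_2D}.

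\textbf{Main obstacle.} The delicate step is the asymmetric term $\int_{\partial\Omega}\partial_{\mathbf n}\xi_v\,\eta_u$. It carries no penalty control, so the inverse inequality forces the loss of two orders. A sharper boundary estimate for $\eta_u$ would avoid this, but it is not available from Lemma~\ref{lem:projection_error_2D}.
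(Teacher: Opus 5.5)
Your proposal is correct and follows essentially the same route as the paper's proof: the same splitting with the elliptic projection, the same test functions $q_h=\xi_u$, $w_h=\xi_v$, and the same reduction of $B_2(\eta_v,\xi_u)-B_2(\xi_v,\eta_u)$ to boundary integrals via $B_2=B_3+{}$boundary terms and the symmetry of $B_3$. It also uses the same inverse/trace bound on $\int_{\partial\Omega}\partial_{\mathbf n}\xi_v\,\eta_u$, which costs two orders, before closing with Gr\"onwall from $\xi_u(0)=0$. One step should be tightened: $\|\nabla\eta_v\|_{\partial\Omega}\le Ch^{k-1/2}$ needs a broken $H^2$ bound on $\eta_v$, which Lemma~\ref{lem:projection_error_2D} does not supply directly, so you should obtain it by comparing $\Pi_h v$ with an interpolant of $v$ and applying an inverse inequality.
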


\begin{proof}
Since $u$ satisfies \eqref{eq:problem_2D_sys} with $v=-\Delta u$, testing the 
continuous mixed formulation with $q_h,w_h\in V_h^k$ yields
\begin{align*}
  (u_t,q_h)+B_2(v,q_h)+J_2(u,q_h)&=L_3(q_h), && \forall q_h\in V_h^k,\\
 -B_2(w_h,u)+(v,w_h)&=L_4(w_h), && \forall w_h\in V_h^k.
\end{align*}
Subtracting \eqref{eq:semi_discrete_2D} from these identities gives
\begin{align*}
  ((u-u_h)_t,q_h)+B_2(v-v_h,q_h)&=0,
  && \forall q_h\in V_h^k,\\
 -B_2(w_h,u-u_h)+(v-v_h,w_h)+J_2(v-v_h,q_h)&=0,
  && \forall w_h\in V_h^k.
\end{align*}
Set
\[
\xi_u:=\Pi_h u-u_h,\quad \eta_u:=\Pi_h u-u,\qquad
\xi_v:=\Pi_h v-v_h,\quad \eta_v:=\Pi_h v-v.
\]
Taking $q_h=\xi_u$ and $w_h=\xi_v$, we obtain
\begin{equation}\label{eq:energy_error_2D}
(\xi_{ut},\xi_u)+(\xi_v,\xi_v)+J_2(\xi_u,\xi_u)
=(\eta_{ut},\xi_u)+(\eta_v,\xi_v)+J_2(\eta_u,\xi_u)
+B_2(\eta_v,\xi_u)-B_2(\xi_v,\eta_u).
\end{equation}
By the definition \eqref{eq:projection_2D} of $\Pi_h$, and the relation between 
$B_2$ and $B_3$, we have
\[
B_2(\eta_v,\xi_u) -B_2(\xi_v,\eta_u)
=\sum_{e\in\mathcal{E}_h^{\mathrm b}}\int_e 
\frac{\partial \eta_v}{\partial\mathbf{n}}\xi_u-\frac{\partial \xi_v}{\partial\mathbf{n}}\,\eta_u\,\mathrm{d}s.
\]
Applying Lemma~\ref{lem:projection_error_2D}, together with the inverse and 
trace inequalities, we obtain
\begin{align*}
(\eta_{ut},\xi_u)
&\le C_{pd}\sum_{E\in\mathcal{T}_h} h_E^{k+1}\|u_t\|_{k+1,E}\|\xi_u\|,\\
(\eta_v,\xi_v)
&\le C_{pd}\sum_{E\in\mathcal{T}_h} h_E^{k+1}\|v\|_{k+1,E}\|\xi_v\|,\\
J_2(\eta_u,\xi_u)
&\le C_{pd}\sum_{E\in\mathcal{T}_h} h_E^{k}\|u\|_{k+1,E}\,J_2(\xi_u,\xi_u)^{1/2},\\
\sum_{e\in\mathcal{E}_h^{\mathrm b}}\int_e
\frac{\partial\eta_v}{\partial\mathbf{n}}\,\xi_u\,\mathrm{d}s
&\le C_{p}\sum_{E\in\mathcal{T}_h} h_E^{k}\|v\|_{k+1,E}\,J_2(\xi_u,\xi_u)^{1/2},\\
\sum_{e\in\mathcal{E}_h^{\mathrm b}}\int_e
\Bigl(-\frac{\partial \xi_v}{\partial\mathbf{n}}\Bigr)\eta_u\,\mathrm{d}s
&\le C_{p}\sum_{E\in\mathcal{T}_h} h_E^{k-1}\|u\|_{k+1,E}\|\xi_v\|.
\end{align*}
Inserting these estimates into \eqref{eq:energy_error_2D} and applying Young's 
inequality yields
\[
(\xi_{ut},\xi_u)+\|\xi_v\|^2+ J_2(\xi_u,\xi_u)
\le C_zh^{2k-2}+\|\xi_u\|^2.
\]
Since $u_h(0)=\Pi_hu_0$, we have $\xi_u(0)=0$. Integrating the 
above inequality in time and applying the continuous Gr\"onwall inequality, we 
obtain
\[
\|\xi_u(t)\|^2+\int_0^t\left(\|\xi_v(s)\|^2+J_2(\xi_u,\xi_u)\right)\,\mathrm{d}s
\le  C_{sp}^2h^{2k-2}.
\]
The triangle inequality and Lemma~\ref{lem:projection_error_2D} then give
\[
\|u_h(t)-u(t)\|\le \|\xi_u(t)\|+\|\eta_u(t)\|
\le C_{sp}h^{k-1}.
\]
 Combining these bounds yields 
\eqref{eq:semi_error_main_2D}.
\end{proof}

\begin{remark} For the two-dimensional problem, Theorem~\ref{thm:semi_error_2D} indicates that the semi-discrete convergence order for the approximation $u_h(t)$ is $k-1$, which is theoretically two orders lower than optimal. This reduction is mainly attributable to the trace and inverse inequalities used in estimating the boundary contributions on $\mathcal{E}_h^D$. Our analysis is carried out on shape-regular triangular meshes using piecewise polynomials of total degree at most $k$. It is worth noting that \cite{liu2018mixed} established optimal error bounds for periodic problems on rectangular meshes with tensor-product spaces $Q^k$, but provided no theoretical result for Dirichlet boundary conditions. In contrast, \cite{fu2025analysis} recently derived optimal error estimates for Dirichlet conditions, albeit also within the $Q^k$ framework.
\end{remark}

\subsection{Fully discrete scheme}
In this section, we present the fully discrete scheme for the two-dimensional 
problem \eqref{eq:original_problem_2D} and state the stability and error 
estimates for the approximate solution. Since the analysis follows the same 
lines as in Section~\ref{sec:onedimension}, we omit the proofs and record only the main results.

\paragraph{Fully discrete scheme.}
Replacing the temporal derivative in the semi-discrete problem 
\eqref{eq:semi_discrete_2D} by the LBRFD formula \eqref{eq:FD_form}, we 
obtain: find $(u_h^m,v_h^m)\in V_h^k\times V_h^k$, $m=n+1,\ldots,N$, such that
\begin{subequations}\label{eq:full_discrete_2D}
\begin{align}
 \sum_{j=0}^n d_j (u_h^{m-j},q_h)+\tau B_2(v_h^m,q_h)+\tau J_2(u_h^m,q_h)
 &=\tau L_3(q_h)(t_m), && \forall q_h\in V_h^k, \label{eq:full_discretea_2D}\\
 -B_2(w_h,u_h^m)+(v_h^m,w_h)&=L_4(w_h)(t_m), && \forall w_h\in V_h^k. \label{eq:full_discreteb_2D}
\end{align}
\end{subequations}

\paragraph{Startup procedure.}
The starting values $(u_h^0,v_h^0),\ldots,(u_h^n,v_h^n)$ are computed by
\begin{subequations}\label{eq:full_discrete_st_2D}
\begin{align}
\sum_{j=0}^n d_{mj}(u_h^{m-j},q_h)+\tau B_2(v_h^m,q_h)+\tau J_2(u_h^m,q_h)
&=\tau L_3(q_h)(t_m), && \forall q_h\in V_h^k,\ m=1,\ldots,n,\label{eq:full_discrete_sta_2D}\\
-B_2(w_h,u_h^m)+(v_h^m,w_h)&=L_4(w_h)(t_m), && \forall w_h\in V_h^k,\ m=1,\ldots,n,\label{eq:full_discrete_stb_2D}\\
u_h^0&=\Pi_h u_0, && \label{eq:full_discrete_stc_2D}
\end{align}
\end{subequations}
where $v_h^0$ is determined from \eqref{eq:full_discrete_stb_2D} at $m=0$.

\begin{theorem}[Stability]\label{thm:full_stable_2D}
For every pair $(n,d)$ in Table~\ref{tab:LBRFDM_BDF_eta} with $\eta_n\in(0,1)$, 
let $\lambda_0$ and $\lambda_1$ denote the minimum and maximum eigenvalues of 
the G-matrix in Lemma~\ref{lem:g_stable}. Suppose that there exists a positive 
constant $C_s$, depending only on $(n,d)$ and the startup procedure, such that
\begin{equation}\label{eq:startup_stability_2D}
\tau\|v_h^n\|^2+\tau J_2(u_h^n,u_h^n)
\le C_s\left(\tau\sum_{j=1}^n\|f^j\|^2+\max_{0\le j\le n}\|u_h^j\|^2\right).
\end{equation}
Then, for  $0<\tau\leq \tau_0$,
there exists  a positive constant
$C_{\mathrm{stab}}$
independent of $h$ and $\tau$, such that for $m\ge n+1$,
\begin{equation}\label{eq:full_stable_2D}
\sum_{i=m-n+1}^m\|u_h^i\|^2+\tau \|v_h^m\|^2+\tau J_2(u_h^m,u_h^m)
\le C_{\mathrm{stab}}\left(\tau\sum_{j=n}^m\|f^j\|^2+\max_{0\le j\le n}\|u_h^j\|^2\right).
\end{equation}
\end{theorem}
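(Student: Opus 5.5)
The plan is to transcribe the one-dimensional $G$-energy argument of Theorem~\ref{thm:full_stable} to the bilinear forms $B_2$, $J_2$. As there, we assume homogeneous boundary data, so $L_4\equiv 0$ and $L_3(q_h)(t_m)=(f^m,q_h)$.

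The first step is to take $q_h=u_h^m-\eta_n u_h^{m-1}$ in \eqref{eq:full_discretea_2D}. We then use \eqref{eq:full_discreteb_2D} at levels $m$ and $m-1$, each tested with $w_h=v_h^m$. Subtracting $\eta_n$ times the second from the first gives
\[
B_2(v_h^m,u_h^m-\eta_n u_h^{m-1})=(v_h^m-\eta_n v_h^{m-1},v_h^m),
\]
which is the analogue of \eqref{eq:B_identity}. Only the bilinearity of $B_2$ and the fact that the same form appears in both equations (with arguments swapped) are used, so no symmetry of $B_2$ is needed. For $m=n+1$ the level $m-1=n$ is a startup level, and \eqref{eq:full_discrete_stb_2D} has the same form, so the identity still holds.

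Next we apply Lemma~\ref{lem:g_stable} together with Lemma~\ref{lem:lbrfdm_g_stable} to the term $\sum_j d_j(u_h^{m-j},u_h^m-\eta_n u_h^{m-1})$. This bounds it below by $|U^m|_G^2-|U^{m-1}|_G^2$, with the $G$-seminorm defined as in \eqref{eq:G_energy} and equivalent to $\sum_{i=1}^n\|u_h^{m-i+1}\|^2$ by \eqref{eq:G_positive}. The remaining terms are handled as follows:
\begin{itemize}
\item the $v$-term: $(v_h^m-\eta_n v_h^{m-1},v_h^m)\ge \tfrac12\|v_h^m\|^2-\tfrac{\eta_n^2}{2}\|v_h^{m-1}\|^2$;
\item the penalty term: since $J_2$ is symmetric and positive semidefinite (because $\beta_0>0$), $J_2(u_h^m,u_h^m-\eta_n u_h^{m-1})\ge \tfrac12 J_2(u_h^m,u_h^m)-\tfrac{\eta_n^2}{2}J_2(u_h^{m-1},u_h^{m-1})$;
\item the source term: $\tau(f^m,u_h^m-\eta_n u_h^{m-1})$ is bounded by Cauchy--Schwarz and Young.
\end{itemize}
Since $\eta_n^2<1$, this yields the one-step inequality \eqref{eq:stable_energy} with $J$ replaced by $J_2$.

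We then sum from $n+1$ to $m$ and bound $\tau\sum_j(\|u_h^j\|^2+\eta_n^2\|u_h^{j-1}\|^2)$ by $\frac{\tau(1+\eta_n^2)}{\lambda_0}$ times a sum of $G$-energies. The discrete Gr\"onwall lemma applies for $\tau\le\tau_0$ and gives a bound with constant $C_e=\exp\bigl((1+\eta_n^2)t_m/(2\lambda_0)\bigr)$.

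To finish, we use \eqref{eq:G_positive} again:
\begin{itemize}
\item the lower eigenvalue bound converts $|U^m|_G^2$ into $\sum_{i=m-n+1}^m\|u_h^i\|^2$ on the left;
\item the upper bound gives $|U^n|_G^2\le\lambda_1\sum_{j=1}^n\|u_h^j\|^2\le n\lambda_1\max_{0\le j\le n}\|u_h^j\|^2$ on the right;
\item the startup terms $\tau\|v_h^n\|^2+\tau J_2(u_h^n,u_h^n)$ are absorbed by hypothesis \eqref{eq:startup_stability_2D}.
\end{itemize}
Dividing by $\min(\lambda_0,1/2)$ absorbs the weights on the left and yields \eqref{eq:full_stable_2D}. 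Here $C_{\mathrm{stab}}$ depends on $\lambda_0,\lambda_1,n,C_s,\eta_n$ and $T$, but not on $h$ or $\tau$.

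The main obstacle is the first step. The one-dimensional proof relies on the identity \eqref{eq:B_identity}, and in 2D the form $B_2$ is not symmetric (boundary flux terms appear in only one slot) and contains $J_I$. So we must check that the cancellation uses only the swapped-argument structure $B_2(v_h,q_h)$ versus $-B_2(w_h,u_h)$, which holds as written. No coercivity of $B_2$ is needed anywhere, because the $v$-equation converts the coupling term into an $L^2$ inner product. The second delicate point is that the Gr\"onwall step needs $\tau$ bounded relative to $\lambda_0$ so that the $\|u_h^m\|^2$ contribution can be absorbed. For this we take $\tau_0=2\lambda_0/(1+\eta_n^2)$, as in Theorem~\ref{thm:full_stable}.
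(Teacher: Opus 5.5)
Your route is the paper's route. The two-dimensional proof is omitted there as ``analogous'' to Theorem~\ref{thm:full_stable}, and you transcribe that $G$-energy argument. You also correctly note that the identity $B_2(v_h^m,u_h^m-\eta_n u_h^{m-1})=(v_h^m-\eta_n v_h^{m-1},v_h^m)$ uses only the swapped-argument structure of the two equations, not symmetry or coercivity of $B_2$.

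\textbf{The final step does not give the stated right-hand side.} Absorbing $\tau\|v_h^n\|^2+\tau J_2(u_h^n,u_h^n)$ through \eqref{eq:startup_stability_2D} brings in $\tau\sum_{j=1}^{n}\|f^j\|^2$. Together with the summed terms, your chain therefore proves the bound with $\tau\sum_{j=1}^{m}\|f^j\|^2$ on the right, exactly as in the one-dimensional theorem. It does not prove the sum from $j=n$ written in \eqref{eq:full_stable_2D}, and nothing in the stated hypothesis lets you drop $f^1,\dots,f^{n-1}$. There are two repairs:
\begin{enumerate}
\item State the result with the sum starting at $j=1$.
\item For the startup procedure \eqref{eq:full_discrete_sta_2D}--\eqref{eq:full_discrete_stb_2D}, bypass the hypothesis. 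Test the $m=n$ startup equation with $q_h=u_h^n$, and use $B_2(v_h^n,u_h^n)=\|v_h^n\|^2$, which follows from the $m=n$ second equation with $w_h=v_h^n$. This gives
\[
\tau\|v_h^n\|^2+\tau J_2(u_h^n,u_h^n)\le \tfrac{\tau}{2}\|f^n\|^2+\Bigl(\tfrac{\tau}{2}+\sum_{j=0}^n|d_{nj}|\Bigr)\max_{0\le j\le n}\|u_h^j\|^2 .
\]
This bound involves only $f^n$, so it yields \eqref{eq:full_stable_2D} as stated.
\end{enumerate}

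\textbf{A smaller point concerns the Gr\"onwall step.} Suppose you lump $\tfrac{\tau}{2}\sum_j(\|u_h^j\|^2+\eta_n^2\|u_h^{j-1}\|^2)$ into $\tfrac{\tau(1+\eta_n^2)}{2\lambda_0}$ times $G$-energies that include $|U^m|_G^2$, as you describe. Then the coefficient of the current level is exactly $1$ at $\tau=\tau_0$, so absorption fails there. For $\tau$ near $\tau_0$ the constant blows up, because the implicit discrete Gr\"onwall lemma produces factors $(1-c\tau)^{-1}$ rather than the bare exponential $C_e$.

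The fix is to keep the term $\eta_n^2\|u_h^{m-1}\|^2$ at level $m-1$. The current level then carries only $\tfrac{\tau}{2\lambda_0}|U^m|_G^2$. For every $\tau\le\tau_0$ one has $1-\tfrac{\tau}{2\lambda_0}\ge\tfrac{\eta_n^2}{1+\eta_n^2}$. This gives a constant uniform in $\tau$ on the whole range claimed in the statement.
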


 \begin{proof}
The proof is analogous to that of Theorem~\ref{thm:full_stable} and is 
therefore omitted.
\end{proof}

\begin{theorem}[Fully discrete error estimate]\label{thm:full_error_2D}
Let $u$ be the exact solution of \eqref{eq:original_problem_2D} and let 
$(u_h^m, v_h^m)$ be the solution of the fully discrete scheme \eqref{eq:full_discrete_2D}--\eqref{eq:full_discrete_st_2D}. 
Assume that the boundary conditions are homogeneous, and that the quantities
$|u(t)|_{k+3}$, $\|u_t(t)\|_{k+1}$, and $\|\partial_t^{p+1}u_h(\cdot,t)\|$ 
are uniformly bounded in time, where $p$ is defined by \eqref{eq:temporal_order}. 
Suppose further that $(n,d)$ is chosen from Table~\ref{tab:LBRFDM_BDF_eta} with $\eta_n\in(0,1)$ and that $\tau<\tau_0$. Then
\begin{equation}\label{eq:full_error_main_2D}
\max_{0\le m\le N}\|u(t_m)-u_h^m\|
\le C_{\mathrm{sp}}\,h^{k-1} + C_\theta^{1/2}\,\tau^p.
\end{equation}
In particular,
\begin{equation}\label{eq:full_error_compact_2D}
\max_{0\le m\le N}\|u(t_m)-u_h^m\|
\le C_{\mathrm{fd}}\left(h^{k-1} + \tau^p\right),
\end{equation}
where the constant $C_{\mathrm{fd}}>0$ is independent of $h$ and $\tau$.
\end{theorem}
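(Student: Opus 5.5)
The plan mirrors the one-dimensional argument of Theorem~\ref{thm:full_error}. I would split the total error at each time level as in \eqref{eq:error_split}:
\[
u(t_m)-u_h^m=\rho^m+\theta^m,\qquad \rho^m:=u(t_m)-u_h(t_m),\quad \theta^m:=u_h(t_m)-u_h^m .
\]
Here $u_h$ is the semi-discrete solution of \eqref{eq:semi_discrete_2D}. The spatial part is handled directly by Theorem~\ref{thm:semi_error_2D}: since that bound holds for every $t\in[0,T]$, evaluating it at $t=t_m$ gives $\max_m\|\rho^m\|\le C_{\mathrm{sp}}h^{k-1}$. The regularity hypotheses on $|u|_{k+3}$ and $\|u_t\|_{k+1}$ are exactly what that theorem needs. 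It therefore remains to bound $\theta^m$ by $C_\theta^{1/2}\tau^p$, and the triangle inequality then gives \eqref{eq:full_error_main_2D}. Taking $C_{\mathrm{fd}}=\max\{C_{\mathrm{sp}},C_\theta^{1/2}\}$ yields \eqref{eq:full_error_compact_2D}.

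For the temporal part I would derive the error system for $(\theta^m,\hat\theta^m)$, with $\hat\theta^m:=v_h(t_m)-v_h^m$. To do so, evaluate \eqref{eq:semi_discrete_2D} at $t_m$ and write $u_{ht}(t_m)=\tau^{-1}\sum_j d_j u_h(t_{m-j})-\delta^m$, where $\delta^m$ is defined as in \eqref{eq:temporal_defect}. Subtracting \eqref{eq:full_discrete_2D} then gives, for all $q_h,w_h\in V_h^k$,
\[
\sum_{j=0}^n d_j(\theta^{m-j},q_h)+\tau B_2(\hat\theta^m,q_h)+\tau J_2(\theta^m,q_h)=\tau(\delta^m,q_h),
\qquad -B_2(w_h,\theta^m)+(\hat\theta^m,w_h)=0 .
\]
The data terms $L_3,L_4$ cancel exactly, because both problems use the same boundary data. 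The analogous system holds for the startup steps, with $\theta^0=0$ because $u_h(0)=u_h^0=\Pi_hu_0$. Lemma~\ref{lem:temporal_consistency} carries over verbatim, since it uses only temporal smoothness of $u_h$ and not the spatial operator. Under the assumed bound on $\|\partial_t^{p+1}u_h\|$ (or $\partial_t^{d+2}$), it gives $\|\delta^m\|\le C_\delta\tau^p$.

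Next I would run the $G$-energy argument of Theorem~\ref{thm:full_stable_2D}, i.e.\ of Theorem~\ref{thm:full_stable}, on this system. Test the first equation with $q_h=\theta^m-\eta_n\theta^{m-1}$. Test the second equation at levels $m$ and $m-1$ with $w_h=\hat\theta^m$ and combine the two; this gives the identity
\[
\tau B_2(\hat\theta^m,\theta^m-\eta_n\theta^{m-1})=\tau(\hat\theta^m-\eta_n\hat\theta^{m-1},\hat\theta^m).
\]
This identity holds even though $B_2$ is not symmetric, because only $B_2(\hat\theta^m,\cdot)$ appears in both equations. Lemmas~\ref{lem:g_stable} and \ref{lem:lbrfdm_g_stable} then bound the time-difference term below by $|\Theta^m|_G^2-|\Theta^{m-1}|_G^2$. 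The $\hat\theta$ and $J_2$ terms are handled by the same polarization inequalities as before, using $J_2\ge0$. The right-hand side is bounded by Cauchy--Schwarz, and summing with the discrete Gr\"onwall lemma (for $\tau<\tau_0$) gives
\[
\sum_{i=m-n+1}^m\|\theta^i\|^2\le C\Bigl(\max_{0\le j\le n}\|\theta^j\|^2+\tau\hat{\ }\|\hat\theta^n\|^2+\tau J_2(\theta^n,\theta^n)+\tau\sum_j\|\delta^j\|^2\Bigr).
\]
(The middle terms should read $\tau\|\hat\theta^n\|^2$.) The startup assumptions analogous to \eqref{eq:startup_error}--\eqref{eq:startup_theta_aux}, which are justified as in Remark~\ref{rem:error_startup}, reduce this bound to $C\tau^{2p}$. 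That is the 2D analogue of Lemma~\ref{lem:temporal_error}. Note also that $\tau\sum_j\|\delta^j\|^2\le T C_\delta^2\tau^{2p}$.

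The main obstacle is justifying the startup bounds and the uniform-in-$h$ temporal smoothness of $u_h$. The defect constant involves $\sup_t\|\partial_t^{p+1}u_h\|$, and this must not grow with $h^{-1}$. The theorem simply assumes this bound, so I would use it as a hypothesis. For the startup estimate \eqref{eq:startup_theta_aux} in 2D, I would try to argue as in Remark~\ref{rem:startup_stability}: apply the $G$-type argument, or direct invertibility of the startup block system tested with $(\theta^m,\hat\theta^m)$, using only the coercivity $(\hat\theta,\hat\theta)+J_2(\theta,\theta)\ge0$. This would produce constants independent of $h$.
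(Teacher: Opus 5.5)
Your proposal follows the paper's route: split $u(t_m)-u_h^m=\rho^m+\theta^m$, bound $\rho^m$ by Theorem~\ref{thm:semi_error_2D}, bound $\theta^m$ by the $G$-energy/discrete Gr\"onwall argument behind Lemma~\ref{lem:temporal_error}, and conclude by the triangle inequality with $C_{\mathrm{fd}}=\max\{C_{\mathrm{sp}},C_\theta^{1/2}\}$. The paper simply cites that lemma, whereas you re-derive it for $B_2$ and $J_2$ (correctly noting that the non-symmetry of $B_2$ is harmless), while taking the startup bounds \eqref{eq:startup_error}--\eqref{eq:startup_theta_aux} and the $h$-uniform bound on $\partial_t^{p+1}u_h$ as given exactly as the paper does, so the argument is complete at the paper's level of rigor; in fact \eqref{eq:startup_theta_aux} follows directly by testing the $m=n$ startup equation with $q_h=\theta^n$.
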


\begin{proof}
The proof follows the same argument as that of Theorem~\ref{thm:full_error}, 
using Theorem~\ref{thm:semi_error_2D} and the temporal error estimate in 
Lemma~\ref{lem:temporal_error}, and is therefore omitted.
\end{proof}
 
\begin{remark}
The theoretical analysis yields a convergence order of $k-1$ for $u_h^m$; nevertheless, numerical experiments confirm that the optimal order $k+1$ is attained in practice. This is consistent with the observations in \cite{fu2025analysis} for the $Q^k$-based approach, where optimal convergence is also observed numerically. The results further demonstrate that the penalty term is indispensable; without it, the method either becomes unstable or fails to achieve the optimal order.
\end{remark}


\section{Numerical examples}

In this section, we present several numerical examples for one- and two-dimensional linear fourth-order parabolic initial-boundary value problems to confirm the theoretical convergence results of the LBRFD method. Both the temporal convergence order in $\tau$ and the spatial convergence order in $h$ are examined. We compare the LBRFD method with the $p$-step BDF method (BDF$p$) for $p=1,\ldots,6$. 

For the computation of the starting values $(u_h^0,v_h^0), (u_h^1,v_h^1), \ldots, (u_h^n,v_h^n)$, three approaches are employed: the barycentric rational interpolation formula (BRIF) given in \eqref{eq:full_discrete_st} (or \eqref{eq:full_discrete_st_2D} in two dimensions), the 3-stage Radau IIA method of classical order 5 (3s-Radau IIA), and the $L^2$ projection of the exact solution onto $V_h^k$ (denoted as "Exact solution").

To show the efficiency of the methods and the accuracy of the obtained approximations, we measure
$$\|e_h\|_{S,L^2}:=\max_{0\leq m\leq n}\|u(t_m)-u_h^m\|,\quad\|e_h\|_{N,L^2}:=\max_{0\leq m\leq N}\|u(t_m)-u_h^m\|,$$
and
$$\|e_h\|_{N,L^\infty}:=\max_{0\leq m\leq n}\|u(t_m)-u_h^m\|_{\infty},\quad\|e_h\|_{N,\interleave\cdot\interleave_h}:=\max_{0\leq m\leq N}\interleave u(t_m)-u_h^m\interleave_h.$$

\begin{example}\label{exm5-1}
Consider the one-dimensional fourth order problem 
$$u_t+u_{xxxx}=f(x,t), \quad u(x,0)=\sin(x),\quad (x,t)\in [0,2\pi]\times (0,1],$$
with boundary condition as in  such that the exact solution is 
$$u(x,t)=e^{-t}\sin(x).$$
\end{example}

We first examine the spatial discretization errors on a fixed temporal mesh, 
measured in the $L^2$, $L^\infty$, and energy norms. Table~\ref{tab:numerical_n6d3_1D} shows that, for given $(n,d)$ and $N$ with the startup scheme \eqref{eq:full_discrete_st}, the $L^2$ errors of the starting values and the BRIFD solution in all three norms attain the optimal convergence orders, despite the theoretical prediction of $k-1$ in the $L^2$ norm. Table~\ref{tab:numerical_n5d2} indicates that the 
method fails to achieve the optimal order when the penalty parameter 
vanishes, whereas the optimal order is recovered once the penalty parameter 
is positive. These results suggest that the observed convergence rate 
depends on the presence of the penalty term in the scheme.

Secondly, we assess the convergence behavior of the temporal discretization 
on a fixed spatial grid. As reported in Table~\ref{tab:numerical_EX-RK-compare}, using startup 
scheme~\eqref{eq:full_discrete_st}, the LBRFD method is temporally of orders $1$ through $5$ for 
$(n,d)=(3,1), (4,1), (5,2), (6,3),$ and $ (9,4)$, respectively, whereas the 
corresponding starting values converge one order faster. A comparison among 
three startup procedures---exact initial data, three-stage Radau IIA, and 
BRIF---shows that the startup strategy has no noticeable effect on either 
the accuracy or the convergence order of the LBRFDM solution.

For $(n,d)=(8,5), (10,5),$ and $(12,5)$, the LBRFDM formula is of order~6 
for ODEs. In the PDE setting, however, no value of $\eta_n\in(0,1)$ is 
available to ensure G-stability (cf.\ Table~\ref{tab:LBRFDM_BDF_eta}), 
and hence the sixth-order temporal convergence cannot be justified by the 
present analysis. Table~\ref{tab:numerical_6th-compare} nevertheless reports temporal orders 
near~6 for both startup procedures (exact solution and BRIF), suggesting 
that the method performs better in practice than the current theory predicts.

Finally, we compare the numerical performance of the BDF$p$ methods against that of the LBRFD scheme, where the starting values are generated via the $L^2$ projection of the exact solution. As shown in Tables~\ref{tab:numerical_BDF-compare} and Figures~\ref{fig:numerical_BDF12-compare}--\ref{fig:numerical_BDF56-compare}, the LBRFD methods with the corresponding parameter pairs $(n,d)$ attain a convergence order of $p$, with $p=d$ when $n-d$ is even and $p=d+1$ when $n-d$ is odd. Furthermore, for $p \leq 4$, appropriate choices of $(n,d)$ enable the LBRFD methods to yield more accurate approximations than their BDF$p$ counterparts.

\begin{table}[!htbp]
\caption{Numerical results of Example \ref{exm5-1} with $(n,d)=(5,2)$, $N=160$.}
\label{tab:numerical_n5d2}
\centering
\begin{tabular}{c c c c c c c cccc}
\toprule
&&\multicolumn{4}{c}{penalty coefficient $\beta_0=0$}&\multicolumn{4}{c}{penalty coefficient $\beta_0=k^2$}\\
\cmidrule(lr){3-6} \cmidrule(lr){7-10}
&& \multicolumn{2}{c}{Startup value}
& \multicolumn{2}{c}{LBRFDM}&\multicolumn{2}{c}{Startup value}
& \multicolumn{2}{c}{LBRFDM}\\
\cmidrule(lr){3-4} \cmidrule(lr){5-6} \cmidrule(lr){7-8}\cmidrule(lr){9-10}
$k$&$M$ & $\|e_h\|_{S,L^2}$ & order & $\|e_h\|_{N,L^2}$ & order & $\|e_h\|_{S,L^2}$ & order & $\|e_h\|_{N,L^2}$ & order \\
\midrule
&16 & 1.2953e-02  && 1.9763e-02&&1.2978e-02&&1.9854e-02&\\
1  &32& 2.8180e-03  &2.20& 4.6166e-03&2.09&2.8242e-03&2.20&4.6212e-03&2.10\\
 &64&  6.7696e-04  &2.05& 1.1336e-03   & 2.02&6.7781e-04&2.06&1.1338e-03&2.03  \\ \hline
&16& 4.0951e-03  && 6.9822e-02&&2.7303e-03&& 2.9543e-03&\\
 2 &32& 1.3264e-03 &1.62&  2.6365e-02&1.40&3.8848e-04&2.81&3.8457e-04&2.94\\
 &64&  4.6369e-04 &1.51&  9.4822e-03&1.47&4.9034e-05&2.98&4.7630e-05&3.01\\ \hline
 &16&1.9506e-05  && 2.7997e-04&&1.4996e-05&&1.5000e-05&\\
3&32&  1.4676e-06 &3.73&  2.4521e-05&3.51&8.8177e-07&4.09&8.5921e-07&4.12\\
 &64&  1.1790e-07  &3.63& 2.1628e-06&3.50&5.4566e-08&4.01&5.5897e-08&3.94\\ 
\bottomrule
\end{tabular}
\end{table}

\begin{table}[!htbp]
\caption{Numerical results of Example \ref{exm5-1} with $(n,d)=(6,3)$, $N=320$ and $\beta_0=10k^2$.}
\label{tab:numerical_n6d3_1D}
\centering
\begin{tabular}{c c c c c c c ccc}
\toprule
&&\multicolumn{2}{c}{Starting value}&\multicolumn{6}{c}{BRIFD}\\
\cmidrule(lr){3-4} \cmidrule(lr){5-10} 
$k$&$M$&$\|e_h\|_{S,L^2}$ &order & $\|e_h\|_{N,L^\infty}$&order&$\|e_h\|_{N,L^2}$&order&$\|e_h\|_{N,\interleave\cdot\interleave_h}$&order\\
\midrule 
&16&1.3032e-02 &  &1.4585e-02 & & 1.9796e-02 &  &2.1243e-01 & \\ 
1&32&2.7953e-03 & 2.22 &3.4377e-03 & 2.08 & 4.6183e-03 & 2.10 &1.0028e-01 & 1.08 \\
&64&6.6684e-04 & 2.07 &8.4356e-04 & 2.03 & 1.1337e-03 & 2.03 &4.9395e-02 & 1.02\\\hline
&16&2.4404e-03 &  &2.2324e-03 &  & 2.4010e-03 &  &2.7164e-02 & \\ 
2&32&3.5131e-04 & 2.80 &3.0042e-04 & 2.89 & 3.4530e-04 & 2.80 &7.6154e-03 & 1.83 \\
&64&4.6515e-05 & 2.92 &3.8305e-05 & 2.97 & 4.5657e-05 & 2.92 &1.9941e-03 & 1.93 \\ \hline
&16&1.4354e-05 &  &1.1346e-05 &  & 1.4093e-05 &  &3.3601e-04 &  \\
3&32&8.7855e-07 & 4.03 &7.0587e-07 & 4.01 & 8.6225e-07 & 4.03 &4.1518e-05 & 3.02 \\
&64&5.4602e-08 & 4.01 &4.4069e-08 & 4.00 & 5.3587e-08 & 4.01 &5.1738e-06 & 3.00 \\
\bottomrule
\end{tabular}
\end{table}

\begin{table}[!htbp]
\caption{Numerical results of Example \ref{exm5-1} with $M=1000$, $k=3$, $\beta_0=k^2$.}
\label{tab:numerical_EX-RK-compare}
\centering
\setlength{\extrarowheight}{0.6mm}
\setlength{\tabcolsep}{1mm}
\begin{tabular}{c  c c c c ccccccccc}  
\toprule
&&\multicolumn{2}{c}{Exact solution}&\multicolumn{4}{c}{3s-Radau IIA}&\multicolumn{4}{c}{BRIF}\\
\cmidrule(lr){3-4} \cmidrule(lr){5-8}  \cmidrule(lr){9-12}
&&\multicolumn{2}{c}{LBRFDM}&\multicolumn{2}{c}{Startup value}
& \multicolumn{2}{c}{LBRFDM}&\multicolumn{2}{c}{Startup value}
& \multicolumn{2}{c}{LBRFDM}\\
\cmidrule(lr){3-4} \cmidrule(lr){5-6} \cmidrule(lr){7-8}\cmidrule(lr){9-10}\cmidrule(lr){11-12} 
$ (n,d)$&$N$ & $\|e_h\|_{N,L^2}$ & order & $\|e_h\|_{S,L^2}$ & order & $\|e_h\|_{N,L^2}$ & order & $\|e_h\|_{S,L^2}$ & order & $\|e_h\|_{N,L^2}$ & order \\
\midrule
&20& 8.2678e-03&& 6.9029e-10&& 8.2678e-03&& 2.7289e-03&& 8.8615e-03& \\
&24&  7.0539e-03& 0.87& 3.1310e-10& 4.34& 7.0539e-03& 0.87& 1.9610e-03& 1.81& 7.4858e-03& 0.93  \\
$(3,1)$&28& 6.1488e-03& 0.89& 1.6032e-10& 4.34& 6.1488e-03& 0.89& 1.4771e-03& 1.84& 6.4766e-03& 0.94  \\
&32&  5.4494e-03& 0.90& 8.9758e-11& 4.34& 5.4494e-03& 0.90& 1.1526e-03& 1.86& 5.7062e-03& 0.95  \\\hline
&20& 5.4712e-04&& 6.9029e-10&& 8.2678e-03&& 1.9886e-04&& 5.3260e-04& \\
&24& 3.8958e-04& 1.86& 3.1310e-10& 4.34& 3.8958e-04& 1.86& 1.1981e-04& 2.78& 3.7982e-04& 1.85  \\
$(4,1)$&28&  2.9160e-04& 1.88& 1.6032e-10& 4.34& 2.9160e-04& 1.88& 7.7675e-05& 2.81& 2.8444e-04& 1.88  \\
&32&  2.2625e-04& 1.90& 8.9758e-11& 4.34& 2.2625e-04& 1.90& 5.3196e-05& 2.83& 2.2107e-04& 1.89  \\\hline
&20&  2.0079e-05&& 6.9029e-10&& 8.2678e-03&& 8.0436e-06&& 2.1457e-05& \\
&24& 1.1949e-05& 2.85& 3.1310e-10& 4.34& 1.1949e-05& 2.85& 4.0597e-06& 3.75& 1.2692e-05& 2.88  \\
$(5,2)$&28&  7.6816e-06& 2.87& 1.6032e-10& 4.34& 7.6816e-06& 2.87& 2.2644e-06& 3.79& 8.1101e-06& 2.91  \\
&32&  5.2254e-06& 2.89& 8.9758e-11& 4.34& 5.2254e-06& 2.89& 1.3607e-06& 3.81& 5.4909e-06& 2.92  \\\hline
&20& 7.8273e-07&& 6.9029e-10&& 8.2678e-03&& 3.4727e-07&& 7.6072e-07& \\
&24&  3.8991e-07& 3.82& 3.1310e-10& 4.34& 3.8990e-07& 3.82& 1.4673e-07& 4.73& 3.7908e-07& 3.82  \\
$(6,3)$&28& 2.1553e-07& 3.85& 1.6032e-10& 4.34& 2.1553e-07& 3.85& 7.0373e-08& 4.77& 2.0953e-07& 3.85  \\
&32& 1.2859e-07& 3.87& 8.9758e-11& 4.34& 1.2859e-07& 3.87& 3.7093e-08& 4.80& 1.2514e-07& 3.86  \\\hline
&20&  4.4583e-08&& 6.9029e-10&& 8.2678e-03&& 2.5116e-08&& 4.8105e-08& \\
&24&  1.8975e-08& 4.69& 3.1310e-10& 4.34& 1.8986e-08& 4.69& 8.9530e-09& 5.66& 2.0151e-08& 4.77  \\
$(9,4)$&28& 9.0804e-09& 4.78& 1.6032e-10& 4.34& 9.0847e-09& 4.78& 3.7157e-09& 5.70& 9.6212e-09& 4.80  \\
&32&  4.7748e-09& 4.81& 8.9758e-11& 4.34& 4.7770e-09& 4.81& 1.7304e-09& 5.72& 5.0364e-09& 4.85  \\
\bottomrule
\end{tabular}

\end{table}
\begin{table}[!htbp]
\caption{Numerical results of Example \ref{exm5-1} with $M=1000$, $k=3$, $\beta_0=k^2$, 6-th method.}
\label{tab:numerical_6th-compare}
\centering
\begin{tabular}{c c c c c c c cccc}  
\toprule
&&\multicolumn{4}{c}{Exact solution}&\multicolumn{4}{c}{BRIF}\\
\cmidrule(lr){3-6} \cmidrule(lr){7-10}
&& \multicolumn{2}{c}{Startup value}
& \multicolumn{2}{c}{LBRFDM}&\multicolumn{2}{c}{Startup value}
& \multicolumn{2}{c}{LBRFDM}\\
\cmidrule(lr){3-4} \cmidrule(lr){5-6} \cmidrule(lr){7-8}\cmidrule(lr){9-10}
$ (n,d)$&$N$ & $\|e_h\|_{S,L^2}$ & order & $\|e_h\|_{N,L^2}$ & order & $\|e_h\|_{S,L^2}$ & order & $\|e_h\|_{N,L^2}$ & order \\
\midrule
&20& 5.4810e-13&& 1.3247e-09&& 7.0706e-10&& 1.2914e-09& \\
&24& 5.4810e-13& 0.00& 4.6761e-10& 5.71& 4.1112e-10& 2.97& 4.5558e-10& 5.71  \\
$(8,5)$&28& 5.4810e-13& 0.00& 1.8949e-10& 5.86& 4.2477e-10& -0.21& 1.8425e-10& 5.87  \\
&32& 5.4810e-13& 0.00& 8.2562e-11& 6.22& 2.7958e-10& 3.13& 8.0101e-11& 6.24  \\\hline
&20& 5.4810e-13&& 1.8184e-09&& 1.1294e-09&& 1.7803e-09& \\
&24& 5.4810e-13& 0.00& 6.6091e-10& 5.55& 4.7039e-10& 4.80& 6.4378e-10& 5.58  \\
$(10,5)$&28& 5.4810e-13& 0.00& 2.7039e-10& 5.80& 4.3644e-10& 0.49& 2.6292e-10& 5.81  \\
&32& 5.4810e-13& 0.00& 1.2529e-10& 5.76& 4.4356e-10& -0.12& 1.2136e-10& 5.79  \\\hline
&20& 5.4810e-13&& 2.1664e-09&& 1.5700e-09&& 2.1019e-09& \\
&24& 5.4810e-13& 0.00& 8.2313e-10& 5.31& 4.7437e-10& 6.56& 8.0163e-10& 5.29  \\
$(12,5)$&28& 5.4810e-13& 0.00& 3.4657e-10& 5.61& 4.3784e-10& 0.52& 3.3894e-10& 5.58  \\
&32& 5.4810e-13& 0.00& 1.6141e-10& 5.72& 4.4146e-10& -0.06& 1.5791e-10& 5.72  \\
\bottomrule
\end{tabular}
\end{table}
\begin{table}[!htbp]
\caption{Numerical results of Example \ref{exm5-1} with $M=1000$, $k=3$, $\beta_0=k^2$.}
\label{tab:numerical_BDF-compare}
\centering
\begin{tabular}{c c c c c c c cccc}  
\toprule
&\multicolumn{3}{c}{BDF}&\multicolumn{3}{c}{LBRFDM}\\
\cmidrule(lr){2-4} \cmidrule(lr){5-7} 
$N$ & BDF & $\|e_h\|_{N,L^2}$ & order & $ (n,d)$ & $\|e_h\|_{N,L^2}$ & order \\
\midrule
20& & 2.9141e-04& & & 5.4712e-04& \\
24&   & 2.0529e-04& 1.92&  & 3.8958e-04& 1.86  \\
28& BDF2& 1.5247e-04& 1.93& $(4,1)$ & 2.9160e-04& 1.88  \\
32& & 1.1764e-04& 1.94&  & 2.2625e-04& 1.90  \\\hline
20&  & 1.0643e-05& & & 2.0079e-05& \\
24&  & 6.2751e-06& 2.90&  & 1.1949e-05& 2.85  \\
28& BDF3& 4.0064e-06& 2.91&  $(5,2)$& 7.6816e-06& 2.87  \\
32&  & 2.7112e-06& 2.92&  & 5.2254e-06& 2.89  \\\hline
20&  & 4.1435e-07& & & 7.8273e-07& \\
24&  & 2.0453e-07& 3.87&  & 3.8991e-07& 3.82  \\
28& BDF4& 1.1230e-07& 3.89&  $(6,3)$& 2.1553e-07& 3.85  \\
32& & 6.6658e-08& 3.91&  & 1.2859e-07& 3.87  \\\hline
20&  & 1.6835e-08& & & 4.4583e-08& \\
24&  & 6.9509e-09& 4.85&  & 1.8975e-08& 4.69  \\
28& BDF5& 3.2812e-09& 4.87&  $(9,4)$& 9.0804e-09& 4.78  \\
32&  & 1.7107e-09& 4.88&  & 4.7748e-09& 4.81  \\
\bottomrule
\end{tabular}
\end{table}

\begin{figure}[!htbp]
\centering
\vspace{-0.2cm}
\subfigure[BDF1]{
\label{fig:numerical_BDF1-compare}
\includegraphics[scale=0.30]{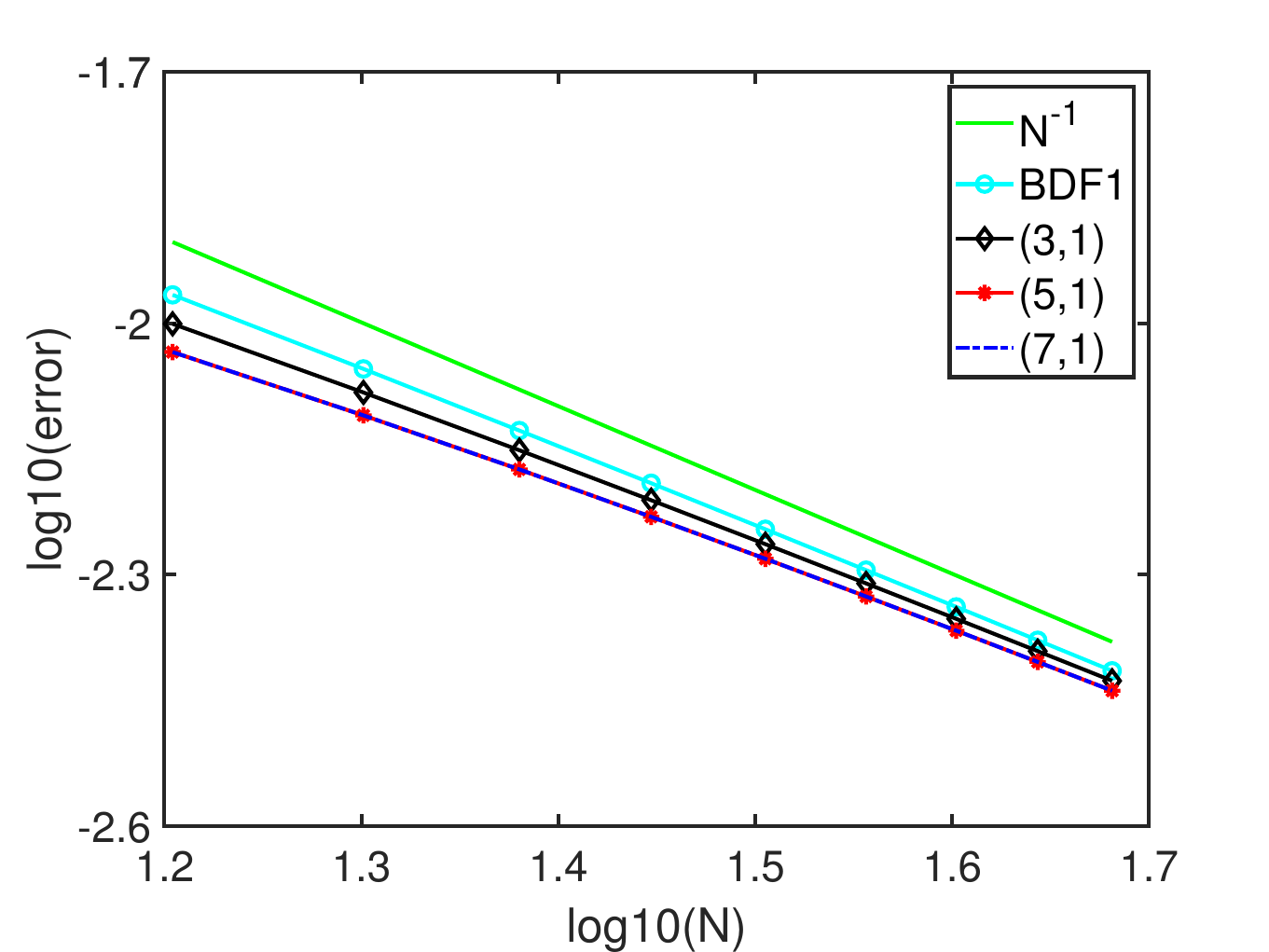}}
\subfigure[BDF2]{
\label{fig:numerical_BDF2-compare}
\includegraphics[scale=0.30]{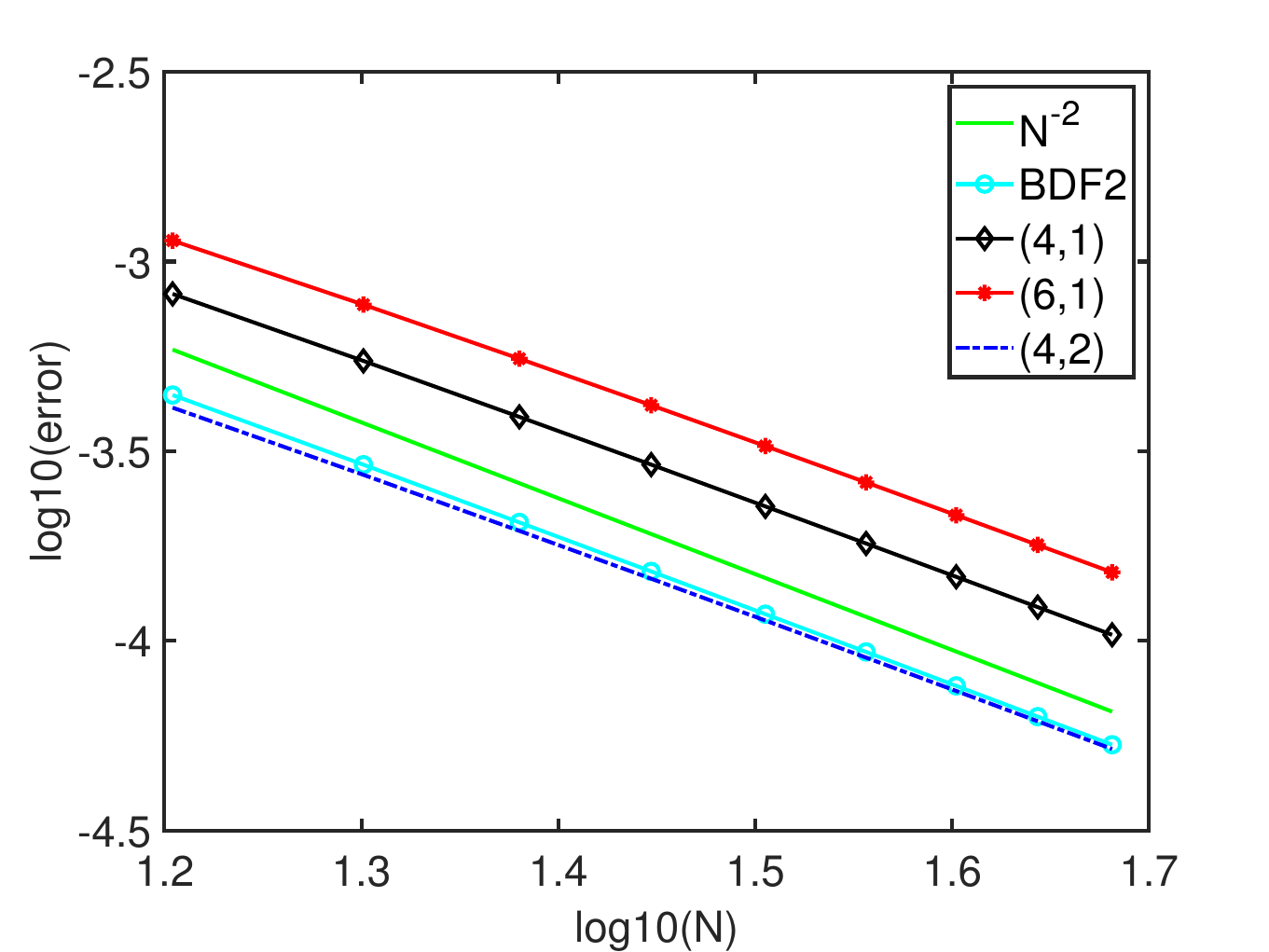}}
\vspace{-0.2cm}
\caption{Numerical results of Example \ref{exm5-1} with $M=1000$, $k=3$, $\beta_0=k^2$.}
\label{fig:numerical_BDF12-compare}
\vspace{-0.2cm}
\end{figure}
\begin{figure}[!htbp]
\centering
\vspace{-0.2cm}
\subfigure[BDF3]{
\label{fig:numerical_BDF3-compare}
\includegraphics[scale=0.30]{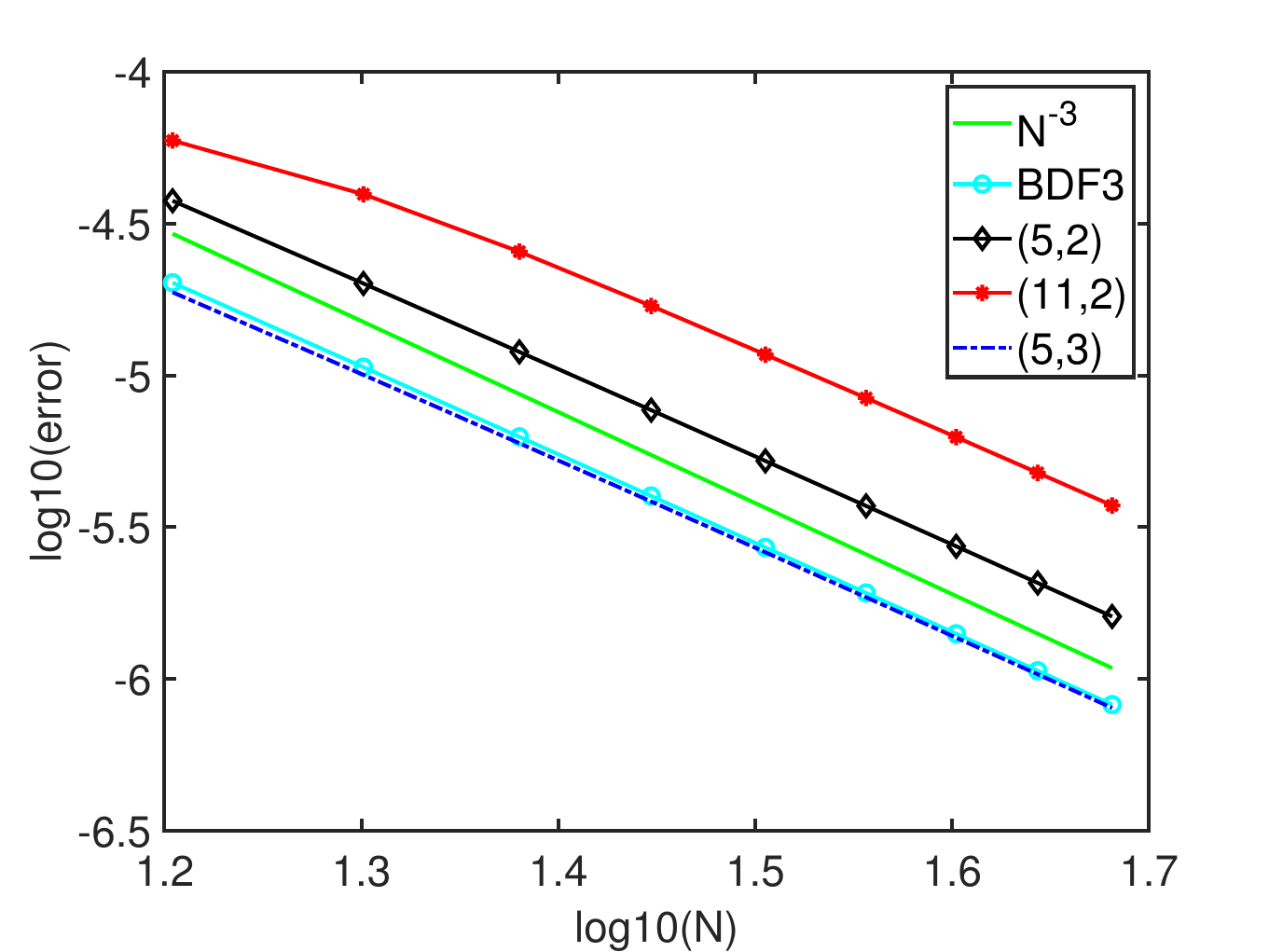}}
\subfigure[BDF4]{
\label{fig:numerical_BDF4-compare}
\includegraphics[scale=0.30]{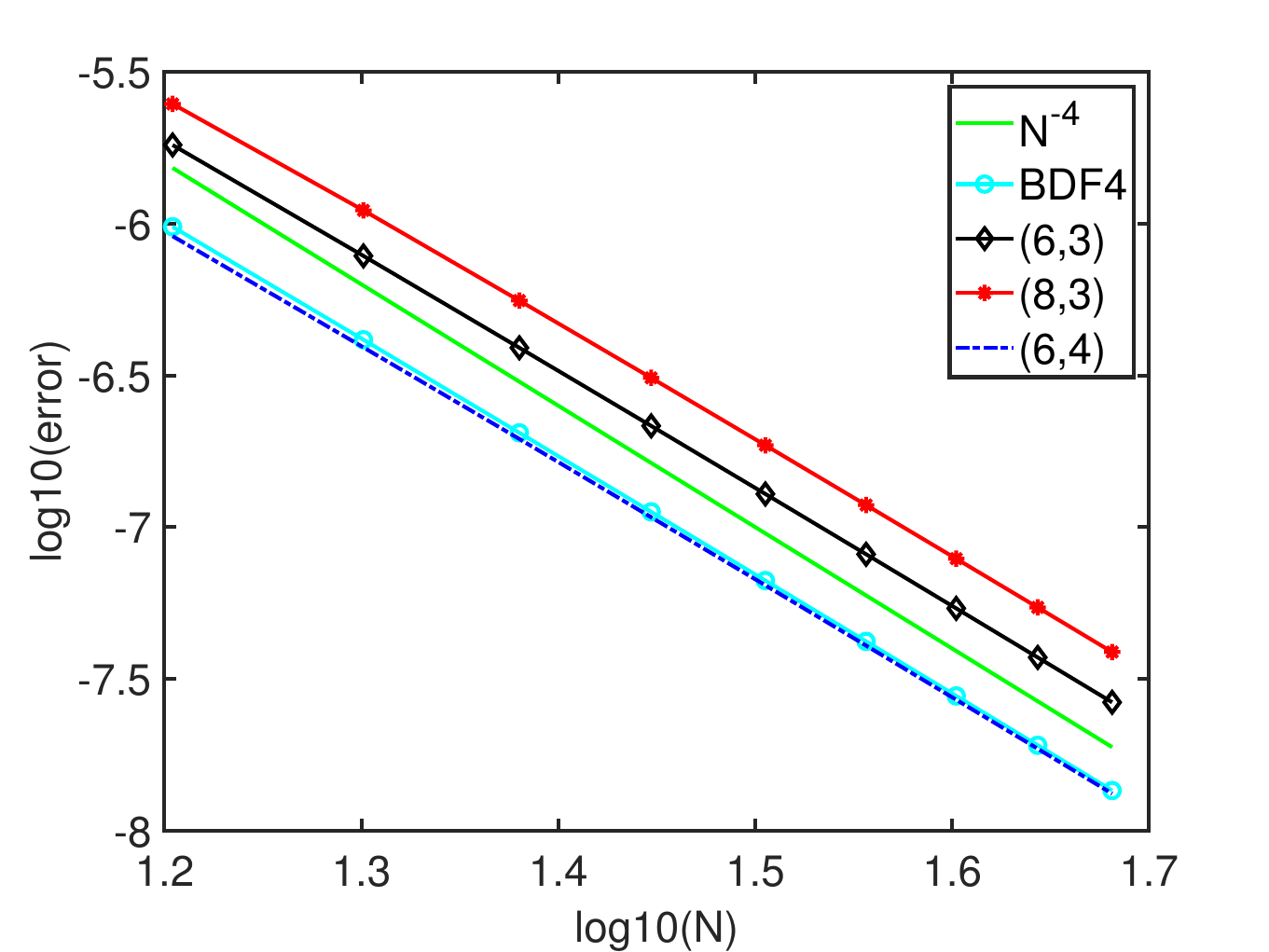}}
\vspace{-0.2cm}
\caption{Numerical results of Example \ref{exm5-1} with $M=1000$, $k=3$, $\beta_0=k^2$.}
\label{fig:numerical_BDF34-compare}
\vspace{-0.2cm}
\end{figure}
\begin{figure}[!htbp]
\centering
\vspace{-0.2cm}
\subfigure[BDF5]{
\label{fig:numerical_BDF5-compare}
\includegraphics[scale=0.30]{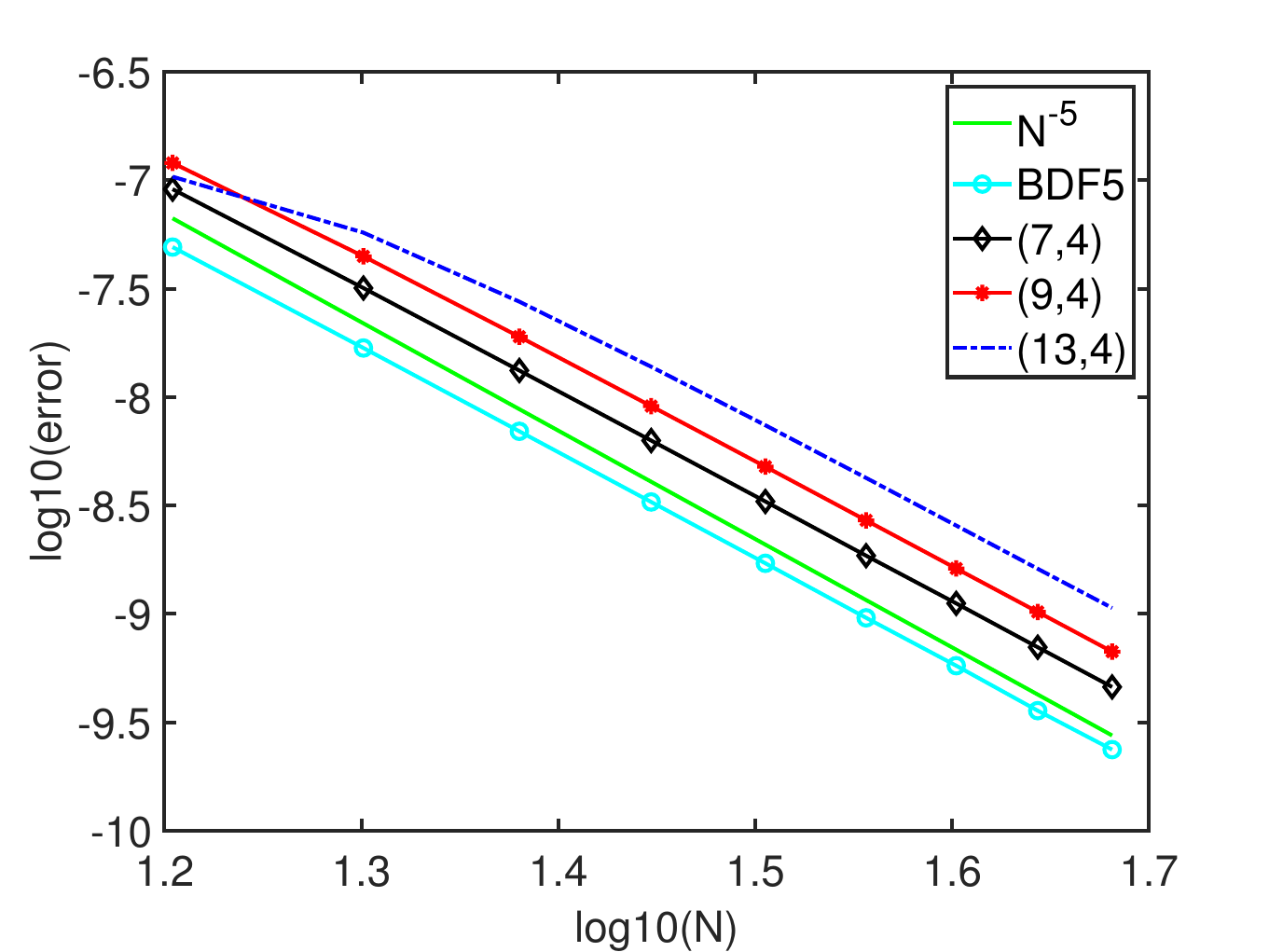}}
\subfigure[BDF6]{
\label{fig:numerical_BDF6-compare}
\includegraphics[scale=0.30]{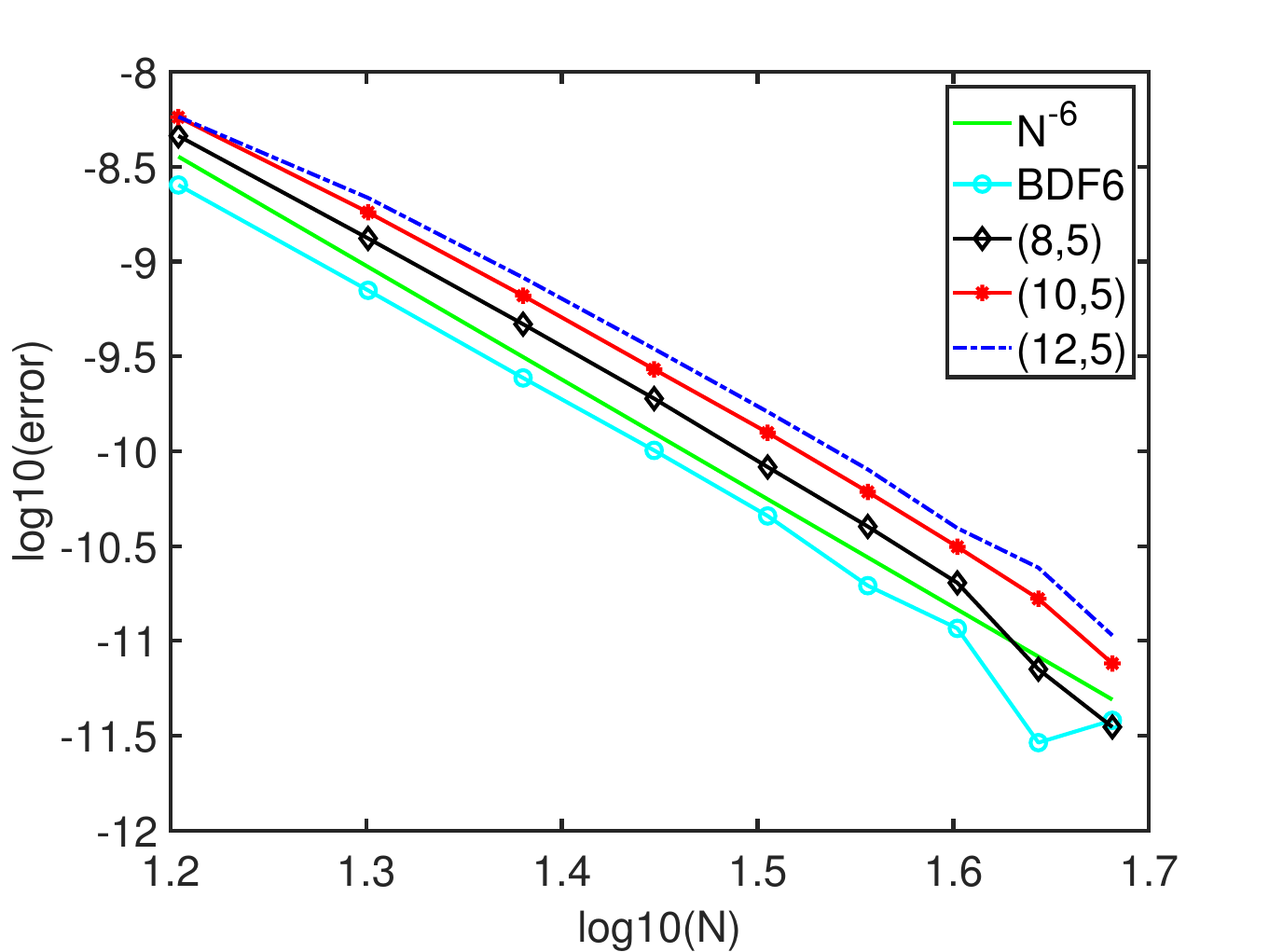}}
\vspace{-0.2cm}
\caption{Numerical results of Example \ref{exm5-1} with $M=1000$, $k=3$, $\beta_0=k^2$.}
\label{fig:numerical_BDF56-compare}
\vspace{-0.2cm}
\end{figure}

\begin{example}\label{exm5-2}
Consider the following two-dimensional fourth-order problem
$$u_t+\Delta^2u=0,\quad u(x,y,0)=\sin(x+y),\quad (x,y)\in \Omega=[0,b]\times [0,b], t\in (0,0.1],$$
subject to Dirichlet boundary conditions
$$u|_{\p\Omega}=g_1(x,y,t),\quad \left.\frac{\p u}{\p\mathbf{n}}\right|_{\p\Omega}=g_2(x,y,t),$$
 chosen so that the exact solution is given by
\begin{equation}
u(x,y,t) = e^{-4t}\sin(x+y).
\end{equation}
\end{example}

In this example, the domain $[0,b]\times[0,b]$ is discretized using a uniform triangular mesh. To this end, the interval $[0,b]$ is first divided into $M$ equal subintervals in each spatial direction, yielding a uniform rectangular partition of the square. Each rectangular cell is then divided into two triangles by connecting one of its diagonals.

Similar to the one-dimensional case, we first examine the spatial discretization errors on a fixed temporal mesh, measured in the $L^2$, $L^\infty$, and energy norms $\interleave\cdot\interleave_h$. Table~\ref{tab:exm52_numerical_n6d3_2D} shows that, for the parameters $(n,d)=(6,3)$ and $N=320$, the convergence orders in the $L^2$, $L^\infty$, and $\interleave\cdot\interleave_h$ norms are all optimal. Table~\ref{tab:exm52_numerical_EX-RK-compare-h} presents the results obtained using three different approaches for computing the initial values. The numerical results indicate that these three initialization strategies have virtually no influence on either the absolute errors of the LBRFD approximate solutions or the corresponding convergence orders.

Next, we examine the temporal discretization errors. Table~\ref{tab:exm52_numerical_EX-RK-compare-t} presents the results obtained with the three initialization methods. For given parameters $(n,d)$, the temporal convergence order is $d+1$ when $n-d$ is odd, and $d$ when $n-d$ is even.

Finally, we compare the numerical performance of the LBRFDM scheme against that of the BDF$p$ methods. For the LBRFDM, starting values are generated using three different initialization strategies, while for the BDF$p$ methods, they are obtained via the $L^2$ projection of the exact solution. As in the one-dimensional case, Figures~\ref{fig:numerical_2d-BDF12-compare}--\ref{fig:numerical_2d-BDF34-compare} demonstrate that the LBRFDM methods with the corresponding parameter pairs $(n,d)$ achieve the same temporal convergence orders as the BDF$p$ methods of the same order.

\begin{table}[!htbp]
\caption{Numerical results of Example \ref{exm5-2} with $(n,d)=(6,3)$, $N=320$, $b=2\pi$ and $\beta_0=10k^2$.}
\label{tab:exm52_numerical_n6d3_2D}
\centering
\begin{tabular}{c c c c c c c ccc}
\toprule
&&\multicolumn{2}{c}{Starting value}&\multicolumn{6}{c}{BRIF}\\
\cmidrule(lr){3-4} \cmidrule(lr){5-10} 
$k$&$M$&$\|e_h\|_{S,L^2}$ &order & $\|e_h\|_{N,L^\infty}$&order&$\|e_h\|_{N,L^2}$&order&$\|e_h\|_{N,\interleave\cdot\interleave_h}$&order\\
\midrule
&16&1.9742e-02 & &1.1516e-02 &  & 1.9913e-02 &  &5.6717e-01 &  \\
1&32&4.9424e-03 & 2.00 &2.8595e-03 & 2.01 & 5.0852e-03 & 1.97 &2.8448e-01 & 1.00 \\
&64&1.2360e-03 & 2.00 &7.1119e-04 & 2.01 & 1.2835e-03 & 1.99 &1.4241e-01 & 1.00 \\\hline
&16&6.3967e-04 &  &4.0292e-04 &  & 9.7593e-04 & &2.9872e-02 &  \\
2&32&8.0047e-05 & 3.00 &5.7733e-05 & 2.80 & 1.2257e-04 & 2.99 &7.5027e-03 & 1.99 \\
&64&1.0009e-05 & 3.00 &8.1704e-06 & 2.82 & 1.5358e-05 & 3.00 &1.8792e-03 & 2.00 \\\hline
&16&7.7268e-06 &  &1.0723e-05 &  & 2.3327e-05 &  &9.4948e-04 &  \\
3&32&4.8334e-07 & 4.00 &6.8007e-07 & 3.98 & 1.4609e-06 & 4.00 &1.1894e-04 & 3.00 \\
&64&3.0215e-08 & 4.00 &4.2685e-08 & 3.99 & 9.1337e-08 & 4.00 &1.4879e-05 & 3.00 \\
\bottomrule
\end{tabular}
\end{table}

\begin{table}[!htbp]
\caption{Numerical results of Example \ref{exm5-2} with $N=80$, $(n,d)=(5,2)$, $\beta_0=4k^2$ and $b=2\pi$.}
\label{tab:exm52_numerical_EX-RK-compare-h}
\centering
\setlength{\extrarowheight}{0.6mm}
\setlength{\tabcolsep}{1mm}
\begin{tabular}{c  c c c c ccccccccc}  
\toprule
&&\multicolumn{2}{c}{Exact solution}&\multicolumn{4}{c}{3s-Radau IIA}&\multicolumn{4}{c}{BRIF}\\
\cmidrule(lr){3-4} \cmidrule(lr){5-8}  \cmidrule(lr){9-12}
&&\multicolumn{2}{c}{LBRFDM}&\multicolumn{2}{c}{Startup value}
& \multicolumn{2}{c}{LBRFDM}&\multicolumn{2}{c}{Startup value}
& \multicolumn{2}{c}{LBRFDM}\\
\cmidrule(lr){3-4} \cmidrule(lr){5-6} \cmidrule(lr){7-8}\cmidrule(lr){9-10}\cmidrule(lr){11-12} 
$k$&$M$ & $\|e_h\|_{N,L^2}$ & order & $\|e_h\|_{S,L^2}$ & order & $\|e_h\|_{N,L^2}$ & order & $\|e_h\|_{S,L^2}$ & order & $\|e_h\|_{N,L^2}$ & order \\
\midrule
 &8&1.6413e-01 &  &1.6443e-01 & &1.6867e-01 &  & 1.6422e-01 &  &1.6868e-01 &  \\
1&16&4.5316e-02 & 1.86 &4.6385e-02 & 1.83 &4.6322e-02 & 1.86 & 4.6430e-02 & 1.82 &4.6232e-02 & 1.87 \\
&32&1.1270e-02 & 2.01 &1.1568e-02 & 2.00 &1.1555e-02 & 2.00 & 1.1574e-02 & 2.00 &1.1545e-02 & 2.00 \\ \hline
&8&1.4750e-02 &  &1.4940e-02 &  &1.4753e-02 & & 1.4943e-02 &  &1.4695e-02 &  \\
2&16&1.9258e-03 & 2.94 &1.9512e-03 & 2.94 &1.9144e-03 & 2.95 & 1.9556e-03 & 2.93 &1.9139e-03 & 2.94 \\
&32&2.4106e-04 & 3.00 &2.4699e-04 & 2.98 &2.4143e-04 & 2.99 & 2.4746e-04 & 2.98 &2.4142e-04 & 2.99 \\\hline
&8&5.5765e-04 &  &5.5663e-04 &  &5.4701e-04 &  & 5.5990e-04 &  &5.4583e-04 &  \\
3&16&3.5843e-05 & 3.96 &3.6563e-05 & 3.93 &3.5843e-05 & 3.93 & 3.6667e-05 & 3.93 &3.5835e-05 & 3.93 \\
&32&2.3065e-06 & 3.96 &2.3637e-06 & 3.95 &2.3111e-06 & 3.96 & 2.3689e-06 & 3.95 &2.3108e-06 & 3.95 \\
\bottomrule
\end{tabular}
\end{table}

\begin{table}[!htbp]
\caption{Numerical results of Example \ref{exm5-2} with $M=32$,  $\beta_0=4k^2$.}
\label{tab:exm52_numerical_EX-RK-compare-t}
\centering
\begin{tabular}{cc  c c c c cccc}  
\toprule
&&&&\multicolumn{2}{c}{Exact solution}&\multicolumn{2}{c}{3s-Radau IIA}&\multicolumn{2}{c}{BRIF}\\
\cmidrule(lr){5-6} \cmidrule(lr){7-8}  \cmidrule(lr){9-10}
$(n,d)$&$b$&$k$&$N$ & $\|e_h\|_{N,L^2}$ & order  & $\|e_h\|_{N,L^2}$ & order &  $\|e_h\|_{N,L^2}$ & order \\
\midrule
  &&&68&2.1207e-03 &  &2.1204e-03 &  &2.1661e-03 &\\
&&&70&2.0626e-03 & 0.96 &2.0624e-03 & 0.96 &2.1055e-03 &0.98\\
(3,1)&$2\pi$&2&72&2.0076e-03 & 0.96 &2.0074e-03 & 0.96 &2.0481e-03 &0.98\\
&&&74&1.9555e-03 & 0.96 &1.9553e-03 & 0.96 &1.9939e-03 &0.98\\\hline
&&&68&1.6519e-05 &  &1.6519e-05 & &1.6261e-05 &\\
&&&70&1.5621e-05 & 1.93 &1.5621e-05 & 1.93 &1.5384e-05 &1.91\\
(4,1)&$2\pi$&3&72&1.4795e-05 & 1.93 &1.4795e-05 & 1.93 &1.4578e-05 &1.91\\
&&&74&1.4034e-05 & 1.93 &1.4034e-05 & 1.93 &1.3834e-05 &1.91\\\hline
&&&20&6.8394e-09 & 2.88 &6.8511e-09 & 2.87 &7.2899e-09 &2.93\\
&&&25&3.5624e-09 & 2.92 &3.5637e-09 & 2.93 &3.7360e-09 &3.00\\
(5,2)&1.0&3&30&2.0823e-09 & 2.94 &2.0818e-09 & 2.95 &2.1449e-09 &3.04\\
&&&35&1.3251e-09 & 2.93 &1.3250e-09 & 2.93 &1.3396e-09 &3.05\\\hline
&&&18&1.1315e-10 & &1.1851e-10 &  &1.1995e-10 &\\
&&&20&7.8005e-11 & 3.90 &8.1644e-11 & 3.91 &8.1990e-11 &3.99\\
(6,3)&1.0&4&24&5.5446e-11 & 3.92 &5.7956e-11 & 3.94 &5.7761e-11 &4.03\\
&&&26&4.0440e-11 & 3.94 &4.2196e-11 & 3.96 &4.1741e-11 &4.06\\
\bottomrule
\end{tabular}
\end{table}

\begin{figure}[!htbp]
\centering
\vspace{-0.2cm}
\subfigure[$(3,1)$]{
\label{fig:numerical_2d-BDF1-compare}
\includegraphics[scale=0.30]{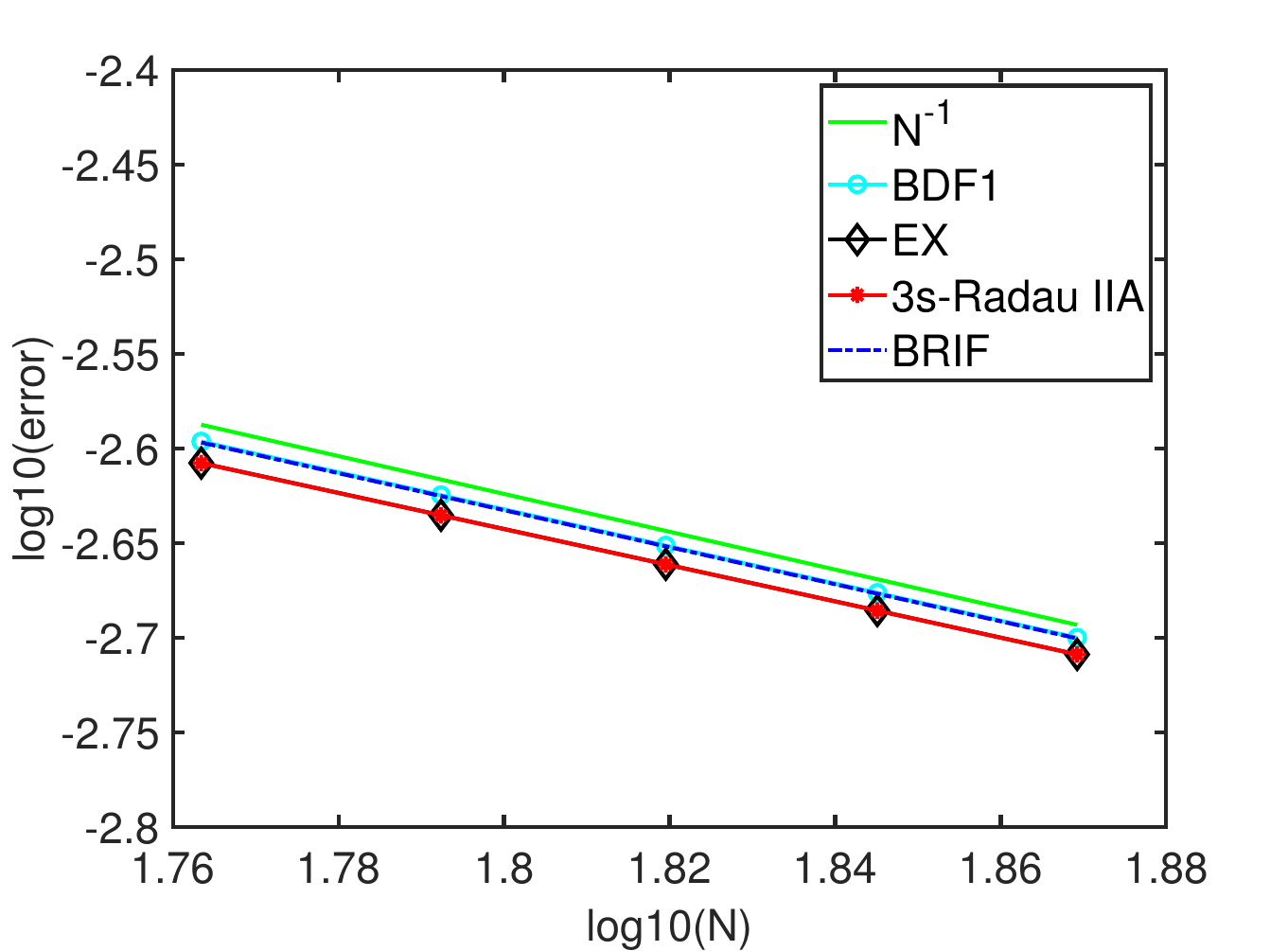}}
\subfigure[$(4,1)$]{
\label{fig:numerical_2d-BDF2-compare}
\includegraphics[scale=0.30]{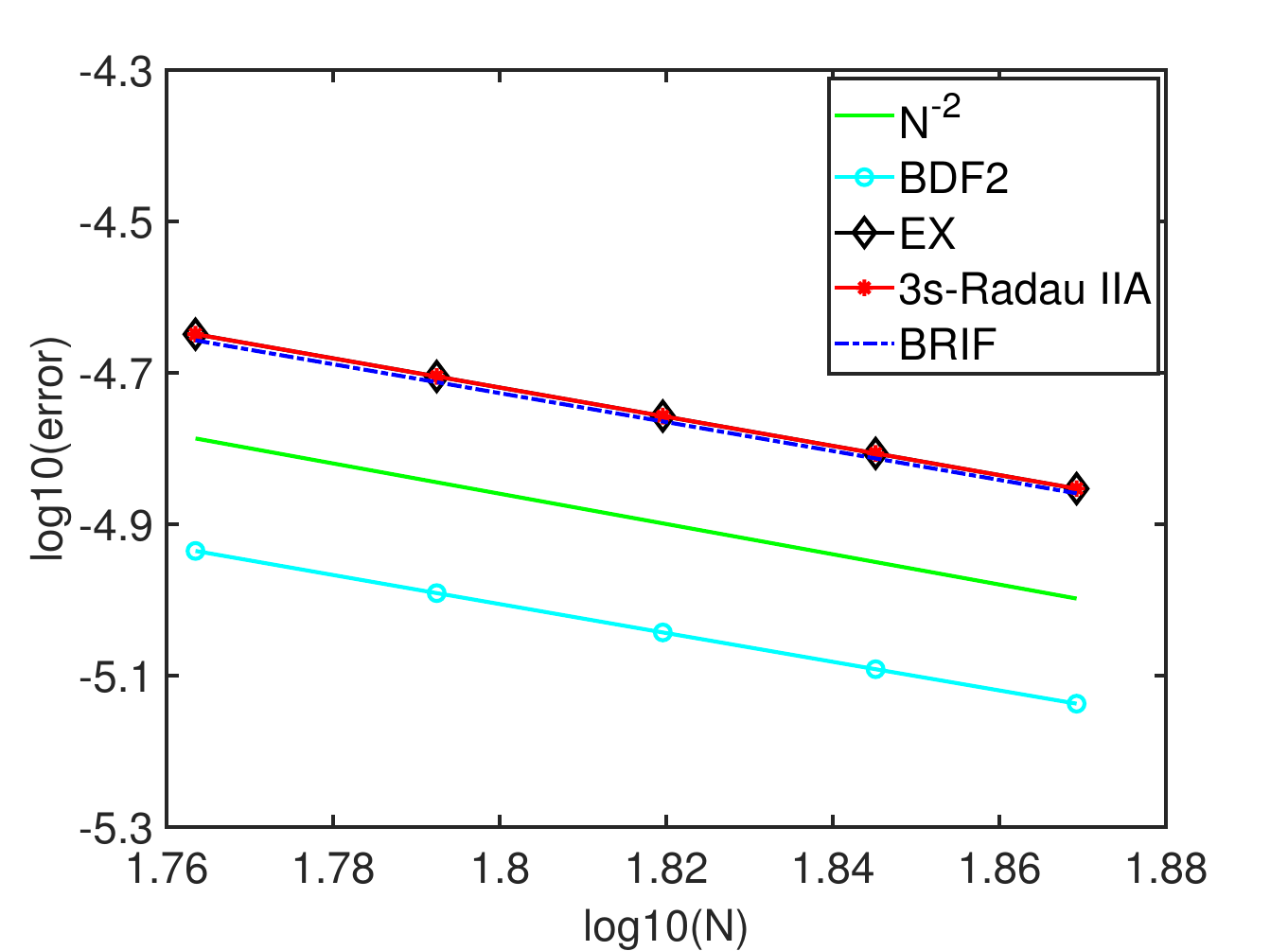}}
\vspace{-0.2cm}
\caption{Numerical results of Example \ref{exm5-2} with $M=32$, $k=2$ (left), $k=3$ (right), $\beta_0=4k^2$.}
\label{fig:numerical_2d-BDF12-compare}
\vspace{-0.2cm}
\end{figure}
\begin{figure}[!htbp]
\centering
\vspace{-0.2cm}
\subfigure[$(5,2)$]{
\label{fig:numerical_2d-BDF3-compare}
\includegraphics[scale=0.30]{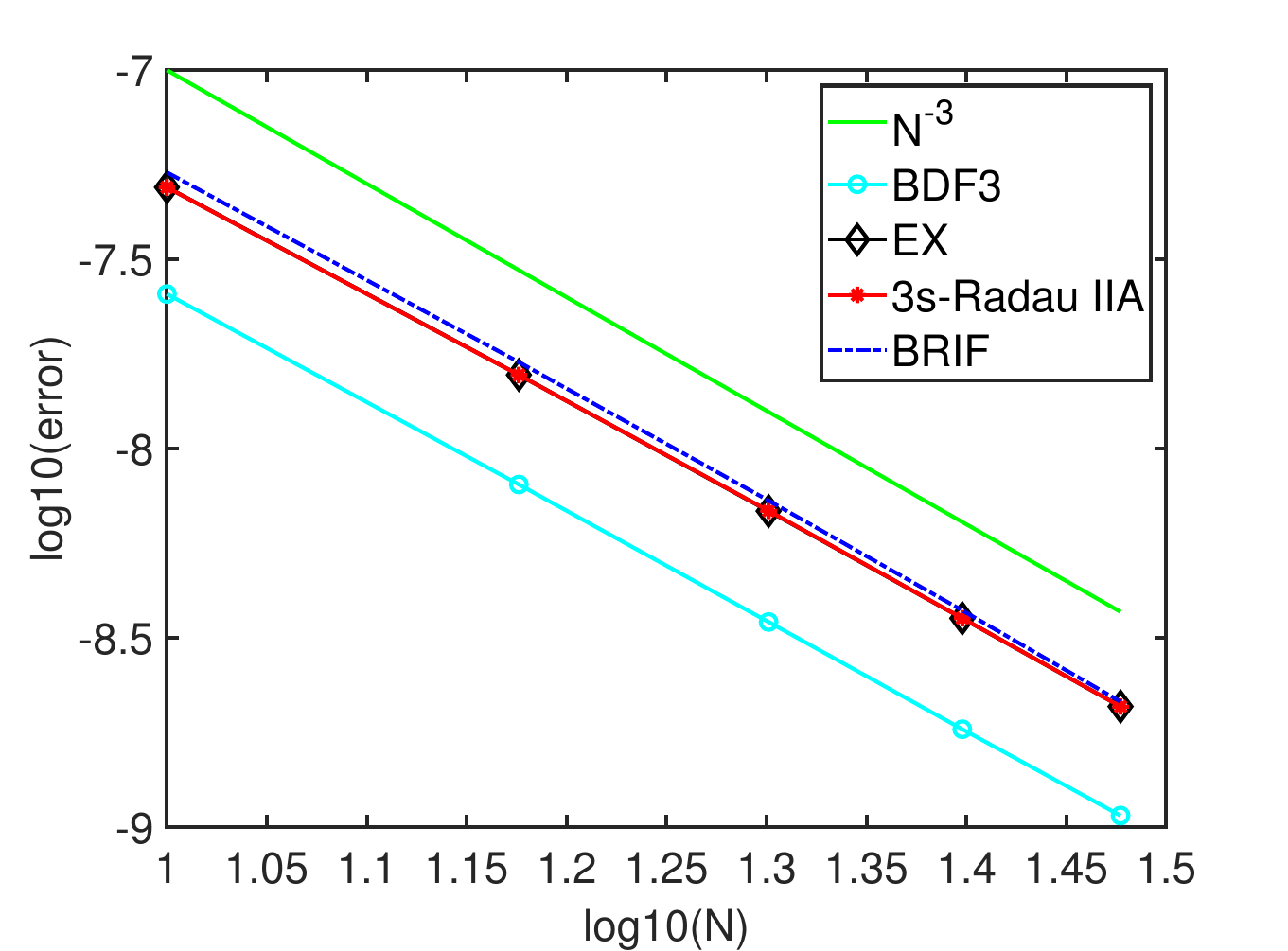}}
\subfigure[$(6,3)$]{
\label{fig:numerical_2d-BDF4-compare}
\includegraphics[scale=0.30]{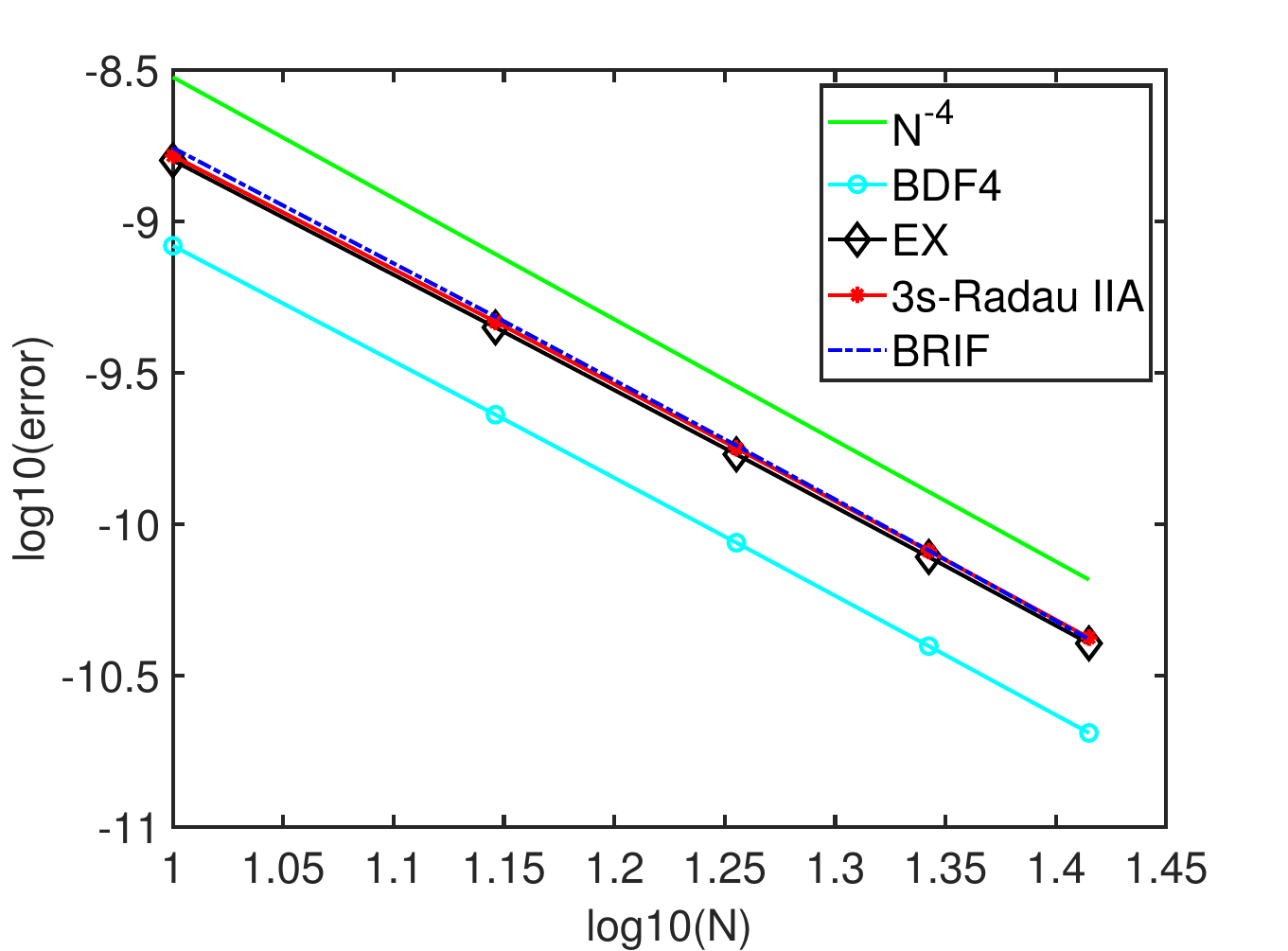}}
\vspace{-0.2cm}
\caption{Numerical results of Example \ref{exm5-2} with $M=32$, $k=3$ (left), $k=4$ (right), $\beta_0=4k^2$.}
\label{fig:numerical_2d-BDF34-compare}
\vspace{-0.2cm}
\end{figure}

\section{Conclusions}

In this paper, we have developed and analyzed a fully discrete method for linear fourth-order parabolic equations, combining an implicit LBRFD multistep scheme in time with a mixed interior penalty discontinuous Galerkin (IPDG) method in space. The temporal discretization employs equispaced linear barycentric rational interpolants and incorporates a startup procedure, while the original problem is reformulated through an auxiliary variable to facilitate the spatial discretization.

For certain parameter pairs $(n,d)$, the LBRFD method is shown to be $A(\alpha)$-stable with a wider stability angle than the corresponding BDF method of the same order. Stability and a priori error estimates are established via a $G$-energy technique and the discrete Gr\"onwall lemma. In one dimension, the theory yields an $L^2$ error estimate of order $h^{k+1}+\tau^p$, where $p=d$ if $n-d$ is even and $p=d+1$ otherwise. In two dimensions, the analysis predicts only $O(h^{k-1})$ for the spatial error, due to boundary contributions on $\mathcal{E}_h^D$ and the penalty functional $J_2(\cdot,\cdot)$. Despite this, numerical experiments confirm optimal convergence of order $h^{k+1}+\tau^p$ in both one- and two-dimensional settings.

Numerical tests verify the theoretical findings and assess the practical performance of the method. The penalty parameter is found to be essential for optimal accuracy; without it, the method fails to achieve optimal convergence. The LBRFDM attains temporal orders $1$ through $5$ for suitable $(n,d)$ pairs, with startup procedures having negligible influence on accuracy or convergence. For higher-order cases such as $(n,d)=(8,5), (10,5), (12,5)$, where the scheme is formally sixth-order for ODEs, the lack of a suitable $\eta\in(0,1)$ for $G$-stability prevents a rigorous proof, though numerical results indicate orders close to~$6$. Comparisons with BDF$p$ methods show that, for $p\le 4$, appropriate $(n,d)$ choices yield more accurate approximations than BDF$p$ counterparts while retaining the same temporal convergence orders.

In two dimensions, the numerical results are consistent with the one-dimensional case: optimal spatial convergence is observed for $(n,d)=(6,3)$, and the three initialization strategies have negligible impact on accuracy and convergence. Temporal convergence orders follow the same rule, and the LBRFDM achieves the same temporal orders as the BDF$p$ methods of the same order.

Future work includes extending the method to nonlinear fourth-order equations, refining the spatial error analysis to close the theory–numerics gap, developing adaptive time-stepping strategies, and applying the scheme to other higher-order PDEs such as the Cahn–Hilliard equation.

\bibliographystyle{elsarticle-num}
\bibliography{high_order_parabolic_DG_review}

%
%
%
%

\end{document}